\documentclass[11pt]{article}
\usepackage{amsmath}
\usepackage{amsthm}
\usepackage{amsfonts}
\usepackage{amssymb}
\usepackage{latexsym}
\usepackage{diagbox}
\usepackage{mathtools}

\usepackage{array}

\usepackage{subcaption}
\usepackage{booktabs}
\usepackage{tkz-graph}
\usepackage{graphics,graphicx}
\usepackage{multirow}

\usepackage{tikz}
\usepackage{tikz-cd}

\usetikzlibrary{matrix,arrows}
\usetikzlibrary{fit}
\usetikzlibrary{patterns}
\usepackage{bm}
\usetikzlibrary{chains,fit,shapes}
\usetikzlibrary{positioning,calc}
\usetikzlibrary{graphs}
\usetikzlibrary{arrows,decorations.markings}
\usepackage[normalem]{ulem} 
\usepackage{hyperref}
\hypersetup{
  colorlinks=true,
  linkcolor=black,
  citecolor=black,
  urlcolor=black
}

\newtheorem{thm}{Theorem}[section]  
\newtheorem{lem}[thm]{Lemma}      
\newtheorem{prop}[thm]{Proposition}
\newtheorem{cor}[thm]{Corollary}

\usepackage{listings}
\usepackage{xcolor}

\begin{document}
\begin{center}
{\large \bf Wilf Equivalence for Length-Three Patterns and Flat POPs, and a Conjecture of Qiu and Remmel}
\end{center}
\begin{center}
Shiqi Cao$^{1}$, Sergey Kitaev$^{2}$ and Yuxin Wu$^{3}$\\[6pt]

$^{1}$Center for Combinatorics, LPMC, Nankai University, Tianjin 300071, P. R. China

$^{2}$Department of Mathematics and Statistics, University of Strathclyde, 26 Richmond Street, Glasgow G1 1XH, United Kingdom

$^{3}$School of Mathematical Sciences, LPMC, Nankai University, Tianjin 300071, P.R. China.
 Email: $^{1}${\tt shiqicao@mail.nankai.edu.cn},
       $^{2}${\tt sergey.kitaev@strath.ac.uk} 
       $^{3}${\tt yuxinwu@mail.nankai.edu.cn}
\end{center}

\noindent\textbf{Abstract.}
It is well known that, for each
classical pattern $\tau$ of length 3, the number of $\tau$-avoiding permutations
of length $n$ is the $n$th Catalan number, and numerous bijections between
different length-three avoidance classes have been constructed and studied.
In this paper, we refine this classical problem by studying Wilf equivalence
among permutations that simultaneously avoid a classical pattern of length
three and a flat partially ordered pattern.

Partially ordered patterns (POPs) provide a flexible framework for encoding
families of classical permutation patterns. For $\ell\geq 3$ and
$1\leq x\leq\ell$, let $P_{\ell,x}$ be the length-$\ell$ POP in which the
entry at position $x$ is required to be smaller than all the other entries,
while no relations are imposed among the remaining entries. Such POPs are
called flat POPs. We classify the Wilf equivalences among all pairs
$(\tau,P_{\ell,x})$, where $\tau$ is a classical pattern of length three.
For every $\ell\geq4$, the resulting $6\ell$ pairs form exactly $2\ell-1$
Wilf equivalence classes, while the exceptional case $\ell=3$ gives four
classes. Our proofs combine the derivation of explicit formulas and recurrence relations with the construction of bijections. Moreover, we introduce novel prime-divisor arguments to distinguish the
remaining candidate classes, reducing the problem to showing that a certain
Diophantine equation has no solutions for $\ell\ge 3{,}274$, where the
bound $3{,}274$ is not claimed to be sharp. Finally, by extending our work on POPs, we resolve a conjecture of Qiu and Remmel concerning the distribution of quadrant marked mesh patterns on 132-avoiding permutations and correct an error in their paper that is crucial to the proof. \\
	
\noindent {\bf Keywords:} Wilf equivalence, 
pattern avoiding permutations, 
partially ordered pattern, 
bijection  
	
\section{Introduction}\label{intro-sec} 
Let $[n]:=\{1,2,\ldots,n\}$. A \emph{permutation} of length $n$ is a
rearrangement of the elements of $[n]$, and we denote by $S_n$ the set
of all permutations of $[n]$. 
For a permutation $\pi=\pi_1\pi_2\cdots\pi_n\in S_n$, its \emph{reverse},
\emph{complement} are defined, respectively, by
$\pi^r=\pi_n\pi_{n-1}\cdots\pi_1$, $\pi^c=(n+1-\pi_1)(n+1-\pi_2)\cdots(n+1-\pi_n)$.
Regarding $\pi$ as the bijection $\pi\colon[n]\to[n]$ given by
$\pi(j)=\pi_j$, its \emph{inverse}, denoted by $\pi^{-1}$, is the
unique permutation satisfying $\pi^{-1}(\pi(j))=j$ for every $j\in[n]$.
For example, let $\pi=24153\in S_5$. Then the inverse, reverse, and
complement of $\pi$ are, respectively, $\pi^{-1}=31524$, $\pi^r=35142$, and $\pi^c=42513$.

A permutation $\pi=\pi_1\pi_2\cdots\pi_n\in S_n$ contains a classical pattern
$\tau=\tau_1\tau_2\cdots\tau_k\in S_k$ if there exist indices
$1\leq i_1<i_2<\cdots<i_k\leq n$ satisfying
$\pi_{i_p}<\pi_{i_q}$ if and only if $\tau_p<\tau_q$ for all
$1\leq p,q\leq k$.
Otherwise, $\pi$ avoids $\tau$. 
For a collection $\mathcal P$ of patterns,
we use $\operatorname{Av}_n(\mathcal P)$ to denote the set of permutations in $S_n$
that avoid every member of $\mathcal P$, and set
$s_n(\mathcal P)=|\operatorname{Av}_n(\mathcal P)|$. 
Two collections $\mathcal P$ and $\mathcal Q$ are \emph{Wilf-equivalent} if
$s_n(\mathcal P)=s_n(\mathcal Q)$ for every $n\geq0$.

The reverse, complement, and inverse operations preserve pattern
containment. Consequently, for every $\tau\in S_k$ and
$s\in\{r,c,-1\}$,
\begin{align*}
|\operatorname{Av}_n(\tau)|
   =|\operatorname{Av}_n(\tau^s)|.
\end{align*}
Thus, patterns related by any composition of the reverse, complement, and
inverse operations are Wilf-equivalent. Similar arguments apply to sets of
patterns in place of a single pattern $\tau$.

The avoidance of patterns of length three is a famous result in the theory of permutation patterns: for every $\tau\in S_3$,
$|\operatorname{Av}_n(\tau)|$ is the $n$-th Catalan number. For further details and related bijective proofs, see
\cite{Knuth1969,Knuth1973,Rotem1975}.
Indeed, reverse, complement, and inverse give immediate equivalences within the two symmetry
orbits $\{123,321\}$ and $\{132,213,231,312\}$,
whereas bijections between the two orbits reveal substantially more
structure. 
Many such correspondences have been constructed using Dyck
paths, standard Young tableaux, generating trees and so on.
Claesson and Kitaev surveyed many bijections
between length three avoidance classes, explained how they are related by
the elementary symmetry operations, and compared the permutation statistics
that they preserve \cite{ClaessonKitaev2008,Kitaev2011Patterns}. 
Among these maps, the Simion--Schmidt bijection \cite{SimionSchmidt1985} is especially relevant here: a property of it will
provide one of the principal equivalences used in our classification.

In addition to classical permutation patterns, it is natural to study
\emph{marked mesh patterns}, particularly \emph{quadrant marked mesh patterns}.
The permutation diagram of
$\pi=\pi_1\pi_2\cdots\pi_n\in S_n$ consists of the points
$(p,\pi_p)$, for all $p\in[n]$.
Using any point of the diagram as the origin divides the plane into
four quadrants, numbered I, II, III, and IV in the usual
counterclockwise order. 
A point matches $\operatorname{MMP}(a,b,c,d)$ if the four quadrants satisfy the
conditions specified by $a,b,c$, and $d$, respectively. 
A positive integer parameter requires the corresponding quadrant to contain at
least that many points, a zero parameter imposes no condition, and an
$\emptyset$ parameter requires the corresponding quadrant to contain
no points.
Kitaev, Remmel, and Tiefenbruck initiated a systematic study of these statistics on
$132$-avoiding permutations. 
Their three-part series treats patterns with
one, two, and at least three nonzero parameters, respectively
\cite{KitaevRemmelTiefenbruckI,KitaevRemmelTiefenbruckII,KitaevRemmelTiefenbruckIII}.
The reverse and complement operations transfer these results to the classes
avoiding $231$, $213$, and $312$. 
Qiu and Remmel \cite{QiuRemmel2018} subsequently developed the corresponding distribution theory 
for $123$-avoiding permutations, and hence reversal transfers these results to the class of $321$-avoiding permutations.

For later use, it is convenient to label the points of a permutation
diagram by their values. For $i\in[n]$, let
$\operatorname{Pos}_{\pi}(i)$ denote the position occupied by the entry
$i$ in $\pi$. Thus, the point corresponding to $i$ is
$(\operatorname{Pos}_{\pi}(i),i)$.
Define
\begin{align*}
Q_{\mathrm{II}}^\pi(i)
:=
\#\bigl\{
j\in[n]:
j>i\text{ and }
\operatorname{Pos}_{\pi}(j)<\operatorname{Pos}_{\pi}(i)
\bigr\}.
\end{align*}
Thus, $Q_{\mathrm{II}}^\pi(i)$ is the number of points lying in the
second quadrant relative to
$(\operatorname{Pos}_{\pi}(i),i)$. When the underlying permutation is
clear, we simply write $Q_{\mathrm{II}}(i)$. Similarly, we define $Q_{\mathrm{I}}^\pi(i)$.

Define a \emph{partially ordered pattern ({POP})} $P$ of length $k$ by a $k$-element partially ordered set (poset) labeled by the elements in $\{1,2,\ldots,k\}$.
An occurrence of $P$ in a permutation $\pi_1\cdots\pi_n\in S_n$ is a subsequence $\pi_{i_1}\cdots\pi_{i_k}$, 
where $1\le i_1<\cdots< i_k\le n$, such that $\pi_{i_j}<\pi_{i_m}$ whenever $j<m$ in $P$. 
Thus, a classical pattern of length $k$ corresponds to a $k$-element chain.
POP-avoiding permutations have been studied by many researchers; 
see, for example \cite{BursteinKitaev2008,Kitaev2007,WangYan}.

\begin{figure}[ht]
\centering
\begin{tikzpicture}[scale=1.1]
    \node[circle, fill, inner sep=2pt, label=below:{$x$}]
        (b) at (0,-1.6) {};

    \node[circle, fill, inner sep=2pt] (a1) at (-2.8,0) {};
    \node[circle, fill, inner sep=2pt] (a2) at (-1.1,0) {};
    \node at (0,0) {$\cdots$};
    \node[circle, fill, inner sep=2pt] (a3) at (1.1,0) {};
    \node[circle, fill, inner sep=2pt] (a4) at (2.8,0) {};

    \node at (0,0.65) {$[\ell]\setminus\{x\}$};

    \draw (b) -- (a1);
    \draw (b) -- (a2);
    \draw (b) -- (a3);
    \draw (b) -- (a4);
\end{tikzpicture}
\caption{$P_{\ell,x}$}
\label{fig:Pellx}
\end{figure}

In this paper, we study the family of POPs $P_{\ell,x}$, where
$\ell\geq3$ and $1\leq x\leq\ell$, presented in Figure~\ref{fig:Pellx}.
The upper level of $P_{\ell,x}$ consists of the $\ell-1$ elements in
$[\ell]\setminus\{x\}$.
An occurrence of $P_{\ell,x}$ is an $\ell$-term subsequence whose
$x$-th entry is smaller than all its other entries. 
According to the definition, it is easy to see that for every
$\pi\in S_n$,
\begin{align*}
  \pi\in\operatorname{Av}_n(P_{\ell,x})
  \quad\Longleftrightarrow\quad
  \operatorname{mmp}^{(\ell-x,x-1,0,0)}(\pi)=0,
\end{align*}
where $\operatorname{mmp}^{(a,b,c,d)}(\pi)$ denotes the number of matches of
$\operatorname{MMP}(a,b,c,d)$ in $\pi$.  

Our main result determines all Wilf equivalences among the $6\ell$ pairs
$(\tau,P_{\ell,x})$ with $\tau\in S_3$ and $x\in[\ell]$.
By applying the reverse operation, it suffices to study the three families
$(213,P_{\ell,x})$, $(231,P_{\ell,x})$, and $(321,P_{\ell,x})$.
For $\ell\geq4$, the complete classification is presented in Table~\ref{tab:wilf-classes}, where distinct values of $r$ correspond to distinct classes; in particular, the last row lists singleton classes.

\begin{table}[htbp]
\centering
\renewcommand{\arraystretch}{1.25}
\setlength{\tabcolsep}{5pt}
\setlength{\arrayrulewidth}{0.5pt}
\small

\begin{tabular}{
    |>{\centering\arraybackslash}m{11.5cm}
    |>{\centering\arraybackslash}m{2.8cm}|}
\hline

\textbf{Wilf equivalence class}
&
\textbf{Ref.}
\\
\hline

$\displaystyle
\mathcal E_{\ell}
=
\bigl\{
(321,P_{\ell,1})
\bigr\}$
&
Corollary~\ref{cor-321}
\\
\hline

$\displaystyle
\mathcal A_{\ell,2}
=
\left\{
(321,P_{\ell,2}), (231,P_{\ell,2}),
(231,P_{\ell,1}),
(213,P_{\ell,\ell})
\right\}$
&
\begin{tabular}[c]{@{}c@{}}
Theorem~\ref{231,Pl1}\\
Theorem~\ref{thm_231_method_1}\\
Theorem~\ref{thm:Phi-Psi-preserve-POP}
\end{tabular}
\\
\hline

$\displaystyle
\begin{gathered}
\mathcal A_{\ell,r}
=
\left\{(321,P_{\ell,r}),
(231,P_{\ell,r})\right\}, \ 3\leq r\leq\ell
\end{gathered}$
&
Theorem~\ref{thm:Phi-Psi-preserve-POP}
\\
\hline
$\displaystyle
\begin{gathered}
\mathcal B_{\ell,r}
=
\bigl\{
(213,P_{\ell,r})
\bigr\},\ 1\leq r\leq\ell-1
\end{gathered}$
&
Theorem~\ref{213_no}
\\
\hline
\end{tabular}

\caption{Wilf equivalence classes of the pairs
$(\tau,P_{\ell,x})$ for $\ell\geq4$.}
\label{tab:wilf-classes}
\end{table}

We also prove that no further equivalences exist. 
Consequently, for every $\ell\geq4$, the total number of Wilf equivalence classes is
$1+(\ell-1)+(\ell-1)=2\ell-1$.
When $\ell=3$, it is easy to check that
$\operatorname{Av}_n(213,P_{3,2})
=\operatorname{Av}_n(213,312)$
and
$\operatorname{Av}_n(321,P_{3,3})
=\operatorname{Av}_n(231,321)$.
By Proposition~12 of Simion and Schmidt \cite{SimionSchmidt1985}, together with the standard inverse, complement, and reverse, shows that
$|\operatorname{Av}_n(213,312)|
=
|\operatorname{Av}_n(231,321)|
=
2^{n-1}$.
Consequently, the case $\ell=3$ contains $2\ell-2=4$ Wilf equivalence classes.

By a statistic on a finite set $\mathcal A$, we simply mean a function
$\operatorname{st}:\mathcal A\to\mathbb{Z}_{\geq 0}$. Its distribution over
$\mathcal A$ is encoded by the polynomial $\sum_{\pi\in\mathcal A} q^{\operatorname{st}(\pi)}$, whose coefficient of $x^j$ is the number of objects $\pi\in\mathcal A$
such that $\operatorname{st}(\pi)=j$. 
For a classical pattern $\tau$ and a quadrant marked mesh pattern
$\operatorname{MMP}(a,b,c,d)$, define
\begin{align*}
Q_{n,\tau}^{(a,b,c,d)}(q)
=
\sum_{\pi\in\operatorname{Av}_n(\tau)}q^{\operatorname{mmp}^{(a,b,c,d)}(\pi)}
\end{align*}
and
\begin{align*}
Q_{\tau}^{(a,b,c,d)}(t,q)
=
1+\sum_{n\geq1}t^nQ_{n,\tau}^{(a,b,c,d)}(q).
\end{align*}
Thus, the variable $t$ records the length of the permutation, whereas
$q$ records the number of matches of the specified quadrant marked mesh
pattern. 
In particular, we have
\begin{align*}
  s_n(\tau,P_{\ell,x})
  =[q^0]Q_{\tau,n}^{(\ell-x,x-1,0,0)}(q).
\end{align*}

Qiu and Remmel \cite[Conjecture~1]{QiuRemmel2018} conjectured that, for every
$k\geq1$,
\begin{align*}
Q_{132}^{(0,k,\emptyset,0)}(t,q)
=
Q_{132}^{(1,k-1,\emptyset,0)}(t,q).
\end{align*}
In Section~\ref{S4}, we correct an error in \cite{QiuRemmel2018} and then
prove that this conjecture holds.
Together with Qiu and Remmel's \cite[Corollary~1]{QiuRemmel2018}, this
immediately implies that $(231,P_{\ell,1})$ and $(231,P_{\ell,2})$ are
Wilf-equivalent. In Theorem~\ref{thm_231_method_1}, we provide an alternative
bijective proof of this equivalence.

This paper is organized as follows.
In Section~\ref{S2}, we establish the Wilf equivalences using bijective methods.
In Section~\ref{S3}, we use exact enumeration, together with an injection and a Diophantine equation, to prove that no further Wilf equivalences exist.
In Section~\ref{S4}, we extend the avoidance problem to the corresponding distribution problem and resolve a conjecture of Qiu and Remmel \cite{QiuRemmel2018}.

\section{Bijective proofs of Wilf equivalences}\label{S2}

In this section, we establish the Wilf equivalences appearing in our
classification. 
We begin with the simplest case. In \cite{KitaevRemmelTiefenbruckI}, 
Kitaev, Remmel, and Tiefenbruck used the complement and inverse operations to prove the following theorem.
\begin{thm}\label{231,Pl1}
For $\ell\geq 3$, $(231,P_{\ell,1})$ and $(213,P_{\ell,\ell})$ are Wilf-equivalent.
\end{thm}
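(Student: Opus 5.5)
The plan is to exhibit a single symmetry of the square, namely a composition of the inverse and reverse operations, that carries $\operatorname{Av}_n(231,P_{\ell,1})$ bijectively onto $\operatorname{Av}_n(213,P_{\ell,\ell})$. Everything is phrased through the quadrant description from the introduction: $\pi\in\operatorname{Av}_n(P_{\ell,x})$ if and only if $\operatorname{mmp}^{(\ell-x,x-1,0,0)}(\pi)=0$. So avoiding $P_{\ell,1}$ means every point has at most $\ell-2$ points in quadrant I, that is, $Q_{\mathrm{I}}^\pi(i)\le \ell-2$ for all $i$. Likewise, avoiding $P_{\ell,\ell}$ means $Q_{\mathrm{II}}^\pi(i)\le\ell-2$ for all $i$.

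Let $\phi(\pi)=(\pi^{-1})^r$. First, I check the effect on the classical pattern. Since $231^{-1}=312$ and $312^{r}=213$, and both inverse and reverse preserve containment (applied to the pattern as well), $\pi$ avoids $231$ if and only if $\phi(\pi)$ avoids $213$.

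Next comes the quadrant bookkeeping. Geometrically, the inverse reflects the diagram in the main diagonal. This swaps the roles of the coordinates but keeps ``above and to the right'' as ``above and to the right'', so quadrant I goes to quadrant I and the counts $Q_{\mathrm{I}}$ are preserved pointwise. The reverse reflects in a vertical line, sending the points to the right of a given point to its left while keeping heights, so quadrant I goes to quadrant II. Hence for the point of $\phi(\pi)$ corresponding to a point of $\pi$, the number of points in quadrant II equals the number of points in quadrant I of the original. In particular, $\max_i Q_{\mathrm{II}}^{\phi(\pi)}(i)=\max_i Q_{\mathrm{I}}^{\pi}(i)$. Therefore $\pi$ avoids $P_{\ell,1}$ if and only if $\phi(\pi)$ avoids $P_{\ell,\ell}$.

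Since $\phi$ is an involution-free but invertible map on $S_n$ (its inverse is $\pi\mapsto (\pi^{r})^{-1}$), it restricts to a bijection between the two avoidance classes for each $n$, giving Wilf equivalence.

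The only real care point is the quadrant check under the inverse, i.e. verifying the orientation rather than assuming it. I would do this directly: a point $j$ lies in quadrant I of $i$ in $\pi$ iff $j>i$ and $\operatorname{Pos}_\pi(j)>\operatorname{Pos}_\pi(i)$. This condition is symmetric under swapping values with positions, so it is preserved by the inverse, as claimed. Reversal then flips the position inequality while keeping the value inequality, which is exactly the quadrant II condition.
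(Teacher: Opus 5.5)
Your proof is correct and is essentially the paper's argument. The paper cites Kitaev--Remmel--Tiefenbruck, who use complement followed by inverse, and since $(\pi^{-1})^r=(\pi^{c})^{-1}$ your map $\phi$ is exactly that symmetry (a quarter-turn of the diagram); your quadrant bookkeeping, which sends quadrant~I counts to quadrant~II counts, supplies the verification the paper leaves to the citation.
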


Next, we turn our attention to the study of
$(132,P_{\ell,\ell})$ and $(132,P_{\ell,\ell-1})$. 
The following lemma gives a useful characterization of $132$-avoiding permutations in terms of 
$Q_{\mathrm{II}}(i)$.
We note that this lemma is the inverse-permutation version of
a classical result concerning inversion tables; see
\cite[Lemma 10]{ClaessonJelinekSteingrimsson2012}.
For completeness and consistency of notation, we restate their result
in our notation and include a proof.

\begin{lem}\label{lem:132-quadrant}
For $n\geq 3$, $\pi\in\text{Av}_n(132)$ if and only if $Q^\pi_{\mathrm{II}}(1)\ge Q^\pi_{\mathrm{II}}(2)\ge\cdots\ge Q^\pi_{\mathrm{II}}(n)$.
\end{lem}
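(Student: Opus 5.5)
The plan is to prove the two implications separately, working directly from the definition of $Q_{\mathrm{II}}$. The key is to compare $Q_{\mathrm{II}}(i)$ and $Q_{\mathrm{II}}(i+1)$ for consecutive values. Write $p=\operatorname{Pos}_\pi(i)$ and $p'=\operatorname{Pos}_\pi(i+1)$, and split on their relative order.

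\emph{Case $p'<p$.} Here $i+1$ lies to the left of $i$. Every $j>i+1$ to the left of $p'$ is also to the left of $p$, and $i+1$ itself counts for $i$. Hence $Q_{\mathrm{II}}(i)\ge Q_{\mathrm{II}}(i+1)+1$ always, and this case never violates the inequality.

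\emph{Case $p<p'$.} Now $i$ lies to the left of $i+1$. The entries counted by $Q_{\mathrm{II}}(i+1)$ are the $j>i+1$ with position less than $p'$. They split into those left of $p$, which are exactly the ones counted by $Q_{\mathrm{II}}(i)$ (note $i+1$ is not left of $p$), and those strictly between $p$ and $p'$. Thus
\[
Q_{\mathrm{II}}(i+1)=Q_{\mathrm{II}}(i)+\#\{j>i+1:\ p<\operatorname{Pos}_\pi(j)<p'\}.
\]
So $Q_{\mathrm{II}}(i)\ge Q_{\mathrm{II}}(i+1)$ fails exactly when some $j>i+1$ sits between $i$ and $i+1$. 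In that situation the subsequence $i,\,j,\,i+1$ is an occurrence of $132$.

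For the forward direction, if $\pi$ avoids $132$, no such $j$ exists. Both cases then give $Q_{\mathrm{II}}(i)\ge Q_{\mathrm{II}}(i+1)$ for every $i$, so the sequence is weakly decreasing.

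For the converse, suppose $\pi$ contains $132$. I need an occurrence using two consecutive values $i,i+1$ as the ``1'' and ``2''; this reduction is the main step. Take an occurrence $a\,c\,b$ with $a<b<c$ and positions $\operatorname{Pos}(a)<\operatorname{Pos}(c)<\operatorname{Pos}(b)$. Among values in $[a,b]$ positioned to the left of $c$, let $i$ be the largest; it exists since $a$ qualifies, and $i<b$. Then $i+1\le b<c$, and by maximality $i+1$ is not to the left of $c$, so it is to the right of $c$. Therefore $i,\,c,\,i+1$ is a $132$ occurrence with $c>i+1$, and by the displayed identity $Q_{\mathrm{II}}(i+1)>Q_{\mathrm{II}}(i)$, so the sequence is not weakly decreasing.

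The hypothesis $n\ge 3$ plays no role beyond excluding trivial cases.
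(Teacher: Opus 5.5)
Your proof is correct. Your forward direction is the paper's argument, except that you sharpen it into the exact identity for $Q_{\mathrm{II}}(i+1)-Q_{\mathrm{II}}(i)$.

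The converse takes a different route. The paper argues by a minimal counterexample. It takes an occurrence $i_1 i_3 i_2$ of $132$ minimizing $i_2-i_1$. It then uses the hypothesis $Q_{\mathrm{II}}(i_1)\ge Q_{\mathrm{II}}(i_2)$, for these generally non-consecutive values (by transitivity of the chain), to find an entry $i_1'$ in the second quadrant of $i_1$ with $i_1'<i_2$. This gives the closer occurrence $i_1' i_3 i_2$, a contradiction.

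You instead prove, without reference to $Q_{\mathrm{II}}$, that any $132$-containing permutation has an occurrence of the form $i\,c\,(i+1)$. You do this by taking $i$ to be the largest value in $[a,b]$ positioned left of $c$, a discrete intermediate-value step. Your identity then immediately gives $Q_{\mathrm{II}}(i+1)>Q_{\mathrm{II}}(i)$.

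Your version is constructive and only ever compares consecutive values. It also yields a sharper local statement: monotonicity fails at $i$ exactly when $i$ precedes $i+1$ with a larger entry between them. The paper's version never computes the difference exactly and stays within the language of second-quadrant containment. The price is an extremal argument that needs the monotonicity hypothesis for non-adjacent pairs.
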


\begin{proof}
If $\pi\in\operatorname{Av}_n(132)$ and
$Q^\pi_{\mathrm{II}}(i)<Q^\pi_{\mathrm{II}}(i+1)$, then $i+1$ must lie
to the right of $i$. Hence, all dots in the second quadrant of $i$ also lie
in the second quadrant of $i+1$. Since
$Q^\pi_{\mathrm{II}}(i+1)>Q^\pi_{\mathrm{II}}(i)$, there must be some
$k>i+1$ lying between $i$ and $i+1$. But then $i,k,i+1$ forms an occurrence
of $132$, a contradiction.

Suppose that $Q^\pi_{\mathrm{II}}(1)\geq Q^\pi_{\mathrm{II}}(2)\geq\cdots\geq
Q^\pi_{\mathrm{II}}(n)$, but $\pi\notin\operatorname{Av}_n(132)$. Choose an occurrence
$i_1i_3i_2$ of $132$ in $\pi$ such that $i_2-i_1$ is minimal among all
occurrences of $132$. If every dot in the second quadrant of $i_1$ also
lies in the second quadrant of $i_2$, then
$Q^\pi_{\mathrm{II}}(i_2)>Q^\pi_{\mathrm{II}}(i_1)$, a contradiction.
Therefore, there exists a dot $i_1'$ in the second quadrant of $i_1$ such
that $i_1'<i_2$. Then $i_1'i_3i_2$ is also an occurrence of $132$, but
$i_2-i_1'<i_2-i_1$,
contradicting the minimality of $i_2-i_1$.
\end{proof}


\begin{thm}\label{thm_231_method_1}
For $\ell\geq 3$, the pairs $(231,P_{\ell,1})$ and $(231,P_{\ell,2})$
are Wilf-equivalent.
\end{thm}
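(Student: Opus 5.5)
By reversal, $\operatorname{Av}_n(231,P_{\ell,x})$ is in bijection with $\operatorname{Av}_n(132,P_{\ell,\ell+1-x})$, since the reverse of $231$ is $132$ and the reverse of $P_{\ell,x}$ is $P_{\ell,\ell+1-x}$. So it suffices to show that $(132,P_{\ell,\ell})$ and $(132,P_{\ell,\ell-1})$ are Wilf-equivalent. The plan is to encode a $132$-avoider by its sequence $c_i=Q^\pi_{\mathrm{II}}(i)$ and to construct an explicit bijection on such sequences.

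\textbf{Step 1: encoding.} The sequence $(c_1,\dots,c_n)$ is the inversion table of $\pi^{-1}$. Hence $\pi\mapsto(c_1,\dots,c_n)$ is a bijection from $S_n$ onto the sequences with $0\le c_i\le n-i$. By Lemma~\ref{lem:132-quadrant}, it restricts to a bijection from $\operatorname{Av}_n(132)$ onto the weakly decreasing such sequences.

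\textbf{Step 2: translating the two POP conditions.}
\begin{itemize}
\item[(a)] $\pi$ avoids $P_{\ell,\ell}$ if and only if $\operatorname{mmp}^{(0,\ell-1,0,0)}(\pi)=0$. This means $c_i\le \ell-2$ for all $i$, and by monotonicity it is equivalent to $c_1\le \ell-2$.
\item[(b)] $\pi$ avoids $P_{\ell,\ell-1}$ if and only if no value $i$ has $c_i\ge \ell-2$ together with a point in its first quadrant. That is, every $i$ with $c_i\ge\ell-2$ must be a right-to-left maximum.
\end{itemize}

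For a $132$-avoider, the indices with $c_i\ge \ell-2$ form an initial segment $\{1,\dots,m\}$ by monotonicity. I will show that condition (b) forces $c_i=n-i$ for all $i\le m$, meaning that $\pi$ ends with $m(m-1)\cdots 1$. The argument is an induction on $i$. If $1,\dots,i-1$ occupy the last $i-1$ positions and $i$ is a right-to-left maximum, then nothing except smaller values lies to the right of $i$. So $i$ sits at position $n-i+1$, and every larger value lies to its left. Conversely, any weakly decreasing admissible sequence with $c_i=n-i$ for $i\le m$ and $c_i\le \ell-3$ for $i>m$ satisfies (b).

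\textbf{Step 3: the bijection.} Set $m=\#\{i: c_i\ge \ell-2\}$ on both sides. Given a $P_{\ell,\ell}$-avoider, we have $c_1=\dots=c_m=\ell-2$ and $c_i\le\ell-3$ for $i>m$. Replace $c_i$ by $n-i$ for $i\le m$ and leave the other entries unchanged. The inverse map replaces $c_i=n-i$ by $\ell-2$ for $i\le m$.

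The checks required are routine:
\begin{itemize}
\item In the forward direction, $n-i\ge \ell-2$ holds because $\ell-2=c_i\le n-i$. The new sequence is weakly decreasing, since $n-m\ge \ell-2> c_{m+1}$. The admissibility condition $c_i\le n-i$ is preserved.
\item In the backward direction, $\ell-2\le n-i$ holds for $i\le m$, and monotonicity is clear.
\item The set of indices with $c_i\ge \ell-2$ is unchanged by both maps, so they are mutually inverse.
\end{itemize}
Composing with Step 1 and with reversal gives the desired bijection $\operatorname{Av}_n(231,P_{\ell,1})\to \operatorname{Av}_n(231,P_{\ell,2})$.

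\textbf{Main obstacle.} The delicate point is Step 2(b): proving rigorously that ``$c_i\ge \ell-2$ implies $i$ is a right-to-left maximum'' forces the rigid tail structure $c_i=n-i$. One must also handle boundary cases, for instance when $n-i=\ell-2$ and an entry counts as both $\ell-2$ and $n-i$, or when $n<\ell$. I will treat this with the induction on $i$ sketched above, using that value $1$ being a right-to-left maximum forces it to occupy the last position.
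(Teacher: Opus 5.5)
Your proposal is correct and is essentially the paper's proof: after the same reversal to $(132,P_{\ell,\ell})$ versus $(132,P_{\ell,\ell-1})$ and the same use of Lemma~\ref{lem:132-quadrant}, your map on codes (replace $c_i=\ell-2$ by $n-i$ for $i\le m$) is exactly the paper's $\varphi\colon A\,1\,2\cdots k\,B\mapsto A\,B\,k(k-1)\cdots 1$, because that move sets $Q_{\mathrm{II}}(i)=n-i$ for $i\le k$ and leaves $Q_{\mathrm{II}}(j)$ unchanged for $j>k$. Working on the sequences, together with your characterization of the target set ($c_i=n-i$ for $i\le m$ and $c_i\le\ell-3$ otherwise), gives the inverse map and bijectivity explicitly, whereas the paper only says the inverse is ``defined analogously.''
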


\begin{proof}
It suffices to construct a bijection $\varphi:
\operatorname{Av}_n(132,P_{\ell,\ell})
\longrightarrow
\operatorname{Av}_n(132,P_{\ell,\ell-1})$.
It is easy to see that, for every permutation in $S_n$ and every $i\in[n]$, $Q_{\mathrm{I}}(i)+Q_{\mathrm{II}}(i)=n-i$.
Moreover, a permutation $\eta$ contains $P_{\ell,x}$ if and only if there
exists an entry $i$ such that $Q_{\mathrm{II}}^\eta(i)\geq x-1$ and
$Q_{\mathrm{I}}^\eta(i)\geq \ell-x$.

We now define $\varphi$. Let $\pi\in\operatorname{Av}_n(132,P_{\ell,\ell})$.
Since $\pi$ avoids $132$, Lemma~\ref{lem:132-quadrant} implies that
$Q_{\mathrm{II}}^\pi(1)
\geq Q_{\mathrm{II}}^\pi(2)
\geq\cdots\geq Q_{\mathrm{II}}^\pi(n)$.
Moreover, since $\pi$ avoids $P_{\ell,\ell}$, for every $i\in[n]$,
$Q_{\mathrm{II}}^\pi(i)\leq\ell-2$.
If $Q_{\mathrm{II}}^\pi(1)\leq\ell-3$, then $Q_{\mathrm{II}}^\pi(i)\leq\ell-3$,
we set $\varphi(\pi)=\pi$.
In this case, $\pi$ automatically avoids $P_{\ell,\ell-1}$.

Suppose now that $Q_{\mathrm{II}}^\pi(1)=\ell-2$.
Let $k$ be the largest integer such that
$Q_{\mathrm{II}}^\pi(1)
=
Q_{\mathrm{II}}^\pi(2)
=
\cdots
=
Q_{\mathrm{II}}^\pi(k)
=
\ell-2$.
We can prove that $1,2,\ldots,k$ form a consecutive increasing block in
$\pi$.
First, for every $1\leq i<k$, the entry $i+1$ must lie to the right of
$i$. Otherwise $Q_{\mathrm{II}}^\pi(i)
\geq Q_{\mathrm{II}}^\pi(i+1)+1$, contrary to their equality. 
Therefore, $\operatorname{Pos}_{\pi}(1)
<
\operatorname{Pos}_{\pi}(2)
<
\cdots
<
\operatorname{Pos}_{\pi}(k)$.
Next, no entry can lie between $i$ and $i+1$ for
$1\leq i<k$. Otherwise, suppose that an entry $j$ satisfies
$
\operatorname{Pos}_{\pi}(i)
<
\operatorname{Pos}_{\pi}(j)
<
\operatorname{Pos}_{\pi}(i+1)$.
Since the entries $1,2,\ldots,i$ all occur at or before $i$, we must
have $j>i+1$. Then $Q_{\mathrm{II}}^\pi(i+1)
>
Q_{\mathrm{II}}^\pi(i)$,
again a contradiction. 
Thus, $1,2,\ldots,k$ form a consecutive block.

Since $Q_{\mathrm{II}}^\pi(1)=\operatorname{Pos}_{\pi}(1)-1=\ell-2$,
the permutation $\pi$ has a unique decomposition $\pi=A\,1\,2\cdots k\,B$,
where $|A|=\ell-2$. Define $\varphi(\pi)=A\,B\,k(k-1)\cdots2\,1$,
and set $\sigma=\varphi(\pi)$. The permutations $\pi$ and $\sigma$ are
illustrated in Figure~\ref{fig:phi}.

\begin{figure}[htbp]
\centering

\begin{subfigure}[t]{0.48\textwidth}
\centering
\begin{tikzpicture}[
    scale=1.0,
    guide/.style={
        dashed,
        dash pattern=on 5pt off 5pt,
        line width=0.8pt
    },
    dotguide/.style={
        dotted,
        line width=1pt
    },
    box/.style={
        draw,
        line width=1pt,
        minimum width=1.35cm,
        minimum height=1.0cm,
        inner sep=0pt
    }
]
\path[use as bounding box] (-1.3,-2.3) rectangle (5.0,1.55);

\draw[guide] (-1.3,0) -- (5.0,0);
\draw[guide] (0.5,1.55) -- (0.5,-2.3);
\draw[guide] (3,1.55) -- (3,-2.3);

\node[box] at (-0.55,0.8) {$A$};
\node[box] at (4.05,0.8) {$B$};

\node at (1.0,-1.7) {$1$};
\node at (2.38,-0.51) {$k$};
\draw[dotguide] (1.25,-1.45) -- (2.25,-0.65);
\end{tikzpicture}

\caption{The permutation diagram of $\pi$.}
\label{fig:pi}
\end{subfigure}
\hfill
\begin{subfigure}[t]{0.48\textwidth}
\centering
\begin{tikzpicture}[
    scale=1.0,
    guide/.style={
        dashed,
        dash pattern=on 5pt off 5pt,
        line width=0.8pt
    },
    dotguide/.style={
        dotted,
        line width=1pt
    },
    box/.style={
        draw,
        line width=1pt,
        minimum width=1.35cm,
        minimum height=1.0cm,
        inner sep=0pt
    }
]
\path[use as bounding box] (-1.3,-2.3) rectangle (5.0,1.55);

\draw[guide] (-1.3,0) -- (4.8,0);
\draw[guide] (0.5,1.55) -- (0.5,-2.1);
\draw[guide] (2.5,1.55) -- (2.5,-2.1);

\node[box] at (-0.55,0.8) {$A$};
\node[box] at (1.55,0.8) {$B$};

\node at (3.0,-0.51) {$k$};
\node at (4.38,-1.7) {$1$};
\draw[dotguide] (3.25,-0.65) -- (4.15,-1.45);
\end{tikzpicture}

\caption{The permutation diagram of $\sigma$.}
\label{fig:sigma}
\end{subfigure}

\caption{The map $\varphi$ from $\pi$ to $\sigma$.}
\label{fig:phi}
\end{figure}

We first show that $\sigma$ avoids $132$. For $1\leq i\leq k$, every
entry larger than $i$ lies to the left of $i$ in $\sigma$. Hence for $1\leq i\leq k$,
$Q_{\mathrm{II}}^\sigma(i)=n-i$,
and therefore
$Q_{\mathrm{II}}^\sigma(1)>
Q_{\mathrm{II}}^\sigma(2)>
\cdots>
Q_{\mathrm{II}}^\sigma(k)$.
For $j>k$, the relative order of all entries larger than $j$ is
unchanged by $\varphi$. Thus, for $j>k$,
$Q_{\mathrm{II}}^\sigma(j)
=
Q_{\mathrm{II}}^\pi(j)$.
By Lemma~\ref{lem:132-quadrant}, $\sigma$ avoids $132$.

It remains to show that $\sigma$ avoids $P_{\ell,\ell-1}$. 
For $1\leq i\leq k$, every entry larger than $i$ lies to its left, so
$Q_{\mathrm{I}}^\sigma(i)=0$.
On the other hand, for $i>k$, the maximality of $k$ gives
$Q_{\mathrm{II}}^\sigma(i)=Q_{\mathrm{II}}^\pi(i)\leq\ell-3$.
Thus, there is no entry $i$ for which both
$Q_{\mathrm{II}}^\sigma(i)\geq\ell-2$ and
$Q_{\mathrm{I}}^\sigma(i)\geq1$ hold. 
Hence, $\sigma$ avoids $P_{\ell,\ell-1}$.
The inverse map can be defined analogously, showing that $\varphi$ is indeed a bijection.
This completes the proof.
\end{proof}

Next, we study the equivalence between $(321,P_{\ell,x})$ and $(231,P_{\ell,x})$.
The Wilf equivalence underlying the following theorem also follows from
the results of Qiu and Remmel \cite{QiuRemmel2018},
expressed in the language of quadrant marked mesh patterns. 
Here we give a different bijective realization by showing that the maps $\Phi$ and
$\Psi$ defined below restrict to the corresponding avoidance classes.
For any $\pi\in S_n$, we define $\varphi^{\langle i\rangle}(\pi)$ as the permutation obtained
by partitioning the plane into quadrants with origin at $i$,
reordering the points in the second quadrant in decreasing order, and then inserting them
back into their original positions; and $\psi^{\langle i\rangle}$
as the analogous operation where the points in the second quadrant are reordered in increasing order. 
If we use $W_i(\pi)$ to denote the subword in $Q_{\mathrm{II}}(i)$, 
then $\varphi^{\langle i\rangle}(\pi)$ is obtained by rearranging the entries of $W_i(\pi)$ in decreasing order, and $\psi^{\langle i\rangle}(\pi)$ is obtained by rearranging the entries of $W_i(\pi)$ in increasing order.
We set
\begin{align*}
  \Phi=\varphi^{\langle n\rangle}\circ\cdots\circ\varphi^{\langle 1 \rangle},\quad\quad\Psi=\psi^{\langle n\rangle}\circ\cdots\circ\psi^{\langle 1\rangle}.
\end{align*}
We first show that $\Phi:\text{Av}_n(321)\to \text{Av}_n(231)$ and 
$\Psi:\text{Av}_n(231)\to\text{Av}_n(321)$.

\begin{lem}\label{lem:images-of-Phi-Psi}
For $n\geq 1$, the maps $\Phi$ and $\Psi$ satisfy
$\Phi(S_n)\subseteq \operatorname{Av}_n(231)$ and $\Psi(S_n)\subseteq \operatorname{Av}_n(321)$.
\end{lem}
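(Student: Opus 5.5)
Let $\pi^{(0)}=\pi$ and $\pi^{(i)}=\varphi^{\langle i\rangle}(\pi^{(i-1)})$ for $1\le i\le n$, so that $\Phi(\pi)=\pi^{(n)}$. I would prove by induction on $i$ an invariant of the form:

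\emph{$(\ast)_i$: in $\pi^{(i)}$, for every $j\le i$, the entries larger than $j$ lying to the left of $j$ appear in decreasing order.}

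Once $(\ast)_n$ holds, $\Phi(\pi)$ avoids $231$. Indeed, an occurrence $b\,c\,a$ of $231$ (with $a<b<c$) has $b$ and $c$ both in the second quadrant of $a$, and they appear in increasing order, contradicting $(\ast)_n$ for $j=a$.

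The key observation is that $\varphi^{\langle i\rangle}$ only permutes values among positions occupied by entries larger than $i$ lying left of $i$. Hence it does not move any entry $j\le i$, and in particular does not change the set of positions to the left of any $j<i$. For the inductive step, fix $j<i$. The entries larger than $j$ lying left of $j$ form a set $S$ of positions, and the operation touches only the positions in $T$, the positions left of $i$ holding values larger than $i$. I split into two cases according to the relative positions of $j$ and $i$.
\begin{itemize}
\item If $\operatorname{Pos}(j)<\operatorname{Pos}(i)$, then the positions in $T$ lying before $j$ are exactly those of $S$ carrying values larger than $i$. On $S$ the word is decreasing by $(\ast)_{i-1}$. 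The values larger than $i$ on $S$ therefore form an initial segment of $S$ (by decreasingness), and they already appear in decreasing order. The remaining positions of $T$, those after $j$, hold values that are all smaller than the values on $S\cap T$? That need not hold.
\item If $\operatorname{Pos}(j)>\operatorname{Pos}(i)$, then $T\subseteq S$ and $T$ is an initial subset of $S$ in value terms, so sorting $T$ decreasingly keeps $S$ decreasing.
\end{itemize}

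The main obstacle is the first case. After sorting all of $T$ decreasingly, large values from positions right of $j$ may move into positions before $j$, and this could break the decreasing order on $S$. I expect the fix is that, since the values on $S\cap T$ are exactly the largest in $S$ and sit at its left end, we only need the values moved onto $S\cap T$ to stay larger than those on $S\setminus T$. All of them exceed $i$, and every value in $S\setminus T$ is at most $i$ (it is bigger than $j$ but not in $T$), so decreasingness survives. If this bookkeeping fails, I would instead strengthen the invariant to a monotonicity of the whole "larger-than-$j$" subsequence restricted to the region left of $i$.

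The proof for $\Psi$ is symmetric. With increasing sorting, the invariant becomes \emph{the entries larger than $j$ left of $j$ appear in increasing order}. This forbids two larger entries in decreasing order before $j$, i.e.\ it forbids $321$. The same two-case argument applies, now using that values exceeding $i$ are the largest and must sit at the right end of $S$. Here $S\cap T$ sits among the positions before $\operatorname{Pos}(i)$, so I would verify this end-placement carefully. Of the whole proof, this placement check for $\Psi$ is where I am least sure of the details.
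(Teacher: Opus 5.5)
Your proposal is correct and is essentially the paper's proof. It uses the same invariant (each second-quadrant word $W_j$, $j\le i$, is decreasing) and the same case split on the relative positions of $j$ and $i$, and your tentative fix in the first case is exactly the paper's decomposition $W_j=UV$: re-sorting turns $U$ into a decreasing word $U'$ whose entries all exceed $i$, and hence exceed every entry of $V$. Your worry about $\Psi$ is unfounded. When $i$ lies left of $j$, the sorted word is a subword of the already increasing $W_j$; in fact it is empty, since $i\in W_j$. When $j$ lies left of $i$, the entries exceeding $i$ occupy the right end of $W_j$, so the mirror-image argument giving $VU'$ goes through.
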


\begin{proof}
It is easy to see that $\pi\in\operatorname{Av}_n(231)$ if and only if $W_i(\pi)$ is
decreasing for every $i\in[n]$.
Fix $\tau\in S_n$, and, for $1\leq a\leq n$, set
$\tau^{\langle a\rangle}
=
\bigl(
\varphi^{\langle a\rangle}\circ
\varphi^{\langle a-1\rangle}\circ\cdots\circ
\varphi^{\langle 1\rangle}
\bigr)(\tau)$.
We prove by induction on $a$ that
$W_i\bigl(\tau^{\langle a\rangle}\bigr)$ is decreasing for every $i\leq a$.
For $a=1$, this follows immediately from the definition of
$\varphi^{\langle1\rangle}$.

Suppose that the assertion holds after the first $a-1$ operations.
Applying $\varphi^{\langle a\rangle}$ makes
$W_a(\tau^{\langle a\rangle})$ decreasing by definition. It remains
to verify that this operation does not destroy the decreasing property
of $W_i$ for any $i<a$. Fix such an $i$, and consider the relative
positions of $i$ and $a$ in $\tau^{\langle a-1\rangle}$.

If $a$ lies to the left of $i$, then $W_a(\tau^{\langle a-1\rangle})$ is a subword of the
decreasing word $W_i(\tau^{\langle a-1\rangle})$, and hence it is
already decreasing. Therefore, $\varphi^{\langle a\rangle}$ does not
change $W_i$.

If $a$ lies to the right of $i$, write $W_i\bigl(\tau^{\langle a-1\rangle}\bigr)=UV$,
where $U$ consists of the entries larger than $a$, and $V$ consists
of the entries whose values lie strictly between $i$ and $a$.
Since $W_i(\tau^{\langle a-1\rangle})$ is decreasing, this
decomposition is well defined, and both $U$ and $V$ are decreasing.
The operation $\varphi^{\langle a\rangle}$ leaves all entries of $V$
fixed and rearranges only entries larger than $a$. After the operation,
the entries occupying the positions of $U$ form a decreasing word,
say $U'$. Moreover, each entry of $U'$ is larger than every entry of $V$. Hence
$W_i\bigl(\tau^{\langle a\rangle}\bigr)=U'V$
is still decreasing.

The induction is complete. Taking $a=n$, we conclude that
$\Phi(\tau)\in\operatorname{Av}_n(231)$.
The proof for $\Psi$ is analogous and is therefore omitted.
\end{proof}

Next, we obtain the following bijection. We provide two proofs that the map is indeed a bijection. The first is direct, while the second reveals its connection with the Simion--Schmidt bijection.

\begin{thm}\label{thm:Phi-Psi-bijection}
For $n\geq 1$, $\Phi$ and $\Psi$ are bijections between $\text{Av}_n(321)$ and $\text{Av}_n(231)$. In particular,
$\Phi\circ\Psi=\text{id}_{\text{Av}_n(231)}$, $\Psi\circ\Phi=\text{id}_{\text{Av}_n(321)}$.
\end{thm}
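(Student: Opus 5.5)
By Lemma~\ref{lem:images-of-Phi-Psi}, $\Phi$ maps $\operatorname{Av}_n(321)$ into $\operatorname{Av}_n(231)$ and $\Psi$ maps $\operatorname{Av}_n(231)$ into $\operatorname{Av}_n(321)$. Since both sets have cardinality the $n$th Catalan number, it suffices to prove the two identities $\Psi\circ\Phi=\mathrm{id}$ on $\operatorname{Av}_n(321)$ and $\Phi\circ\Psi=\mathrm{id}$ on $\operatorname{Av}_n(231)$; in fact one identity already gives injectivity and hence bijectivity, but I would prove both via a common invariant.

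The invariant I would use is the set of positions and values of the \emph{left-to-right minima} together with the set of positions occupied by non-left-to-right-minima, in the spirit of Simion--Schmidt. First I would show that each single step $\varphi^{\langle i\rangle}$ and $\psi^{\langle i\rangle}$ preserves the left-to-right minima (as position--value pairs). The entries of $W_i$ all exceed $i$ and lie left of $i$. Rearranging them among their own positions cannot create a new left-to-right minimum below some existing one. Next I would argue that none of them is a left-to-right minimum except possibly those already such, which are smaller than every earlier entry. I expect care is needed here, and would try to show the stronger statement that the left-to-right minima inside $W_i$ form a set whose relative placement is forced.

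Concretely, the cleaner route is to prove that for any $\pi$, $\Phi(\pi)$ is the unique $231$-avoider and $\Psi(\pi)$ the unique $321$-avoider with the same left-to-right minima (values and positions) as $\pi$. This is the classical Simion--Schmidt fact: given the left-to-right-minima skeleton, the other entries are placed uniquely so as to avoid $321$ (increasing fill) or $231$ (greedy fill). Then $\Psi(\Phi(\pi))$ and $\pi$ are both $321$-avoiders with the same skeleton, so they are equal, and symmetrically $\Phi\circ\Psi=\mathrm{id}$.

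The main obstacle is the invariance of the left-to-right minima under each $\varphi^{\langle i\rangle}$. I would handle it by the following observation. A left-to-right minimum $m$ lying in $W_i$ is smaller than everything to its left. After sorting $W_i$, the new entry at that position is some element of $W_i$. I would need to show that sorting does not move $m$. This can fail for arbitrary words, so instead I would prove invariance of the multiset of values $\{\text{LR minima}\}$ and their position set jointly by induction on $i$. The inductive step would use that, for $i$ processed in increasing order, the entries $1,\dots,i-1$ already sit such that each LR minimum larger than $i$ to the left of $i$... If this direct route stalls, I would fall back on a direct inverse argument: show that $\psi^{\langle i\rangle}$ undoes $\varphi^{\langle i\rangle}$ when the later operations $\varphi^{\langle j\rangle}$ with $j>i$ do not disturb $W_i$. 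The latter holds by the $U'V$ argument of Lemma~\ref{lem:images-of-Phi-Psi}, giving $\Psi\circ\Phi=\mathrm{id}$ by peeling operations from $n$ down to $1$.
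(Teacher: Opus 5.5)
\textbf{Gap: left-to-right minima are the wrong invariant.} The steps $\varphi^{\langle i\rangle}$ and $\psi^{\langle i\rangle}$ do not preserve left-to-right minima. Take $\pi=231\in\operatorname{Av}_3(321)$. Then $\varphi^{\langle 1\rangle}$ sorts $W_1=23$ decreasingly, and one gets $\Phi(231)=321$. Its left-to-right minima are $(1,3),(2,2),(3,1)$ instead of $(1,2),(3,1)$. Even their number changes, so your weakened version (preserving values and position set jointly) also fails.

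The ``classical fact'' you invoke is false in this form: $123$ and $132$ both avoid $321$ and $231$, and both have left-to-right minima $\{(1,1)\}$. The Simion--Schmidt characterization by left-to-right minima is for $\operatorname{Av}_n(123)$ versus $\operatorname{Av}_n(132)$. Moving it to $321$ and $231$ requires reversal, which turns left-to-right minima into right-to-left minima.

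Your fallback does not work either. The composition $\Psi\circ\Phi=\psi^{\langle n\rangle}\circ\cdots\circ\psi^{\langle 1\rangle}\circ\varphi^{\langle n\rangle}\circ\cdots\circ\varphi^{\langle 1\rangle}$ applies $\psi^{\langle 1\rangle}$ immediately after $\varphi^{\langle n\rangle}$, so nothing telescopes from $n$ down to $1$. The $U'V$ argument only shows that later steps keep $W_i$ decreasing, not that they leave it unchanged. For the $321$-avoider $41523$, $\varphi^{\langle 1\rangle}$ does nothing and $\varphi^{\langle 2\rangle}$ gives $51423$, changing $W_1$ from $4$ to $5$. Also, $\psi^{\langle i\rangle}$ undoes $\varphi^{\langle i\rangle}$ only if $W_i$ was increasing just before $\varphi^{\langle i\rangle}$. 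That need not hold once earlier steps have acted.

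\textbf{Repair: use right-to-left minima.} Each step preserves them:
\begin{itemize}
\item every entry of $W_i$ has the smaller entry $i$ to its right, so it is not a right-to-left minimum before or after the rearrangement;
\item an unmoved entry $h$ to the left of $i$ satisfies $h<i$, while all moved entries exceed $i$, so the status of $h$ is unchanged;
\item entries at or to the right of $i$ see an unchanged suffix.
\end{itemize}
A $321$-avoider is determined by its right-to-left minima. Its other entries must increase; otherwise two of them, together with a smaller entry to the right of the second, form a $321$.

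By Lemma~\ref{lem:images-of-Phi-Psi}, $\Psi(\Phi(\pi))$ and $\pi$ are then $321$-avoiders with the same right-to-left minima, hence equal. The Catalan count gives bijectivity and $\Phi\circ\Psi=\mathrm{id}$.

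This is essentially the paper's second argument (Theorem~\ref{thm:Phi-Simion--Schmidt}). The paper's first proof is different: it shows by induction on $a$ that $\psi^{\langle a\rangle}\circ\cdots\circ\psi^{\langle 1\rangle}$ restores the positions of $1,\dots,a+1$ in $\Phi(\pi)$.
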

\begin{proof}
By Lemma~\ref{lem:images-of-Phi-Psi}, the two maps have the indicated codomains.
Fix $\pi\in\operatorname{Av}_n(321)$ and set $\sigma=\Phi(\pi)$.
For $0\leq a\leq n$, define $\sigma^{\langle0\rangle}=\sigma$
and $\sigma^{\langle a\rangle}
=
\bigl(
\psi^{\langle a\rangle}\circ\cdots\circ
\psi^{\langle1\rangle}
\bigr)(\sigma)$.
Thus, $\sigma^{\langle n\rangle}
=
\Psi(\sigma)
=
\Psi(\Phi(\pi))$.
Similar to the proof of Lemma~\ref{lem:images-of-Phi-Psi}, we have the following observation: in $\sigma^{\langle a-1\rangle}$, for every $i<a$, the entries in the second quadrant of $i$ are increasing. Applying $\psi^{\langle j\rangle}$ with $j>i$ does not destroy this increasing order; it only rearranges some larger entries into increasing order again.

We prove by induction on $a$ that, for $0\leq a\leq n-1$,
\begin{align}\label{eq:positions-restored}
\operatorname{Pos}_{\sigma^{\langle a\rangle}}(j)
=
\operatorname{Pos}_{\pi}(j)
\qquad
\text{for every }j\in[a+1].
\end{align}

For $a=0$, the claim is immediate.
Now let $1\leq a\leq n-1$ and assume that \eqref{eq:positions-restored} holds with $a-1$ in place of $a$.
Set $\rho=\sigma^{\langle a-1\rangle}$.
Thus, for $j\in[a]$, we have
$\operatorname{Pos}_{\rho}(j)
=
\operatorname{Pos}_{\pi}(j)$.
Moreover, by the construction of $\Psi$, the word $W_i(\rho)$ is
increasing for every $i<a$.

Since $\psi^{\langle a\rangle}$ moves only entries larger than $a$,
it does not change the positions of $1,2,\ldots,a$. It therefore
remains only to prove that $\operatorname{Pos}_{\sigma^{\langle a\rangle}}(a+1)
=
\operatorname{Pos}_{\pi}(a+1)$.

We first prove that
\begin{align}\label{eq:same-side}
\operatorname{Pos}_{\rho}(a+1)<\operatorname{Pos}_{\rho}(a)
\quad\Longleftrightarrow\quad
\operatorname{Pos}_{\pi}(a+1)<\operatorname{Pos}_{\pi}(a).
\end{align}

Suppose first that $a+1$ lies to the left of $a$ in $\rho$, but to the
right of $a$ in $\pi$. If some $i<a$ lies to the right of $a+1$ in
$\pi$, then, since the positions of $a$ and $i$ have already been
restored, the entries $a+1,a,i$ occur in this order in $\rho$. Hence
$W_i(\rho)$ contains $a+1$ before $a$, which contradicts
the fact that $W_i(\rho)$ is increasing.
If no such $i$ exists, then every entry smaller than $a+1$ lies to the
left of $a+1$ in $\pi$. $a+1$ is never moved past any smaller entry 
and therefore remains to the right of $a$, again a contradiction.

Conversely, suppose that $a+1$ lies to the left of $a$ in $\pi$. 
Since $\pi$ avoids $321$, no entry $i<a$ can lie to the right of $a$. 
Thus $a$ is the rightmost entry among $1,2,\ldots,a$. 
By the definition of $\rho$, we know that $a+1$ lies to the left of $a$ in $\rho$, which proves \eqref{eq:same-side}.
We now distinguish two cases.

\noindent \textbf{Case 1:} $a+1$ lies to the left of $a$ in $\rho$.

By \eqref{eq:same-side}, $a+1$ also lies to the left of $a$ in $\pi$.
Since the positions of $1,2,\ldots,a$ agree in $\rho$ and $\pi$, the
positions occupied by entries larger than $a$ to the left of $a$ are
the same in the two permutations. 
Because $\pi$ avoids $321$, the word $W_a(\pi)$ is increasing. Since
$a+1$ is its smallest possible entry, $a+1$ occupies the leftmost
position in $W_a(\pi)$. The operation $\psi^{\langle a\rangle}$
arranges $W_a(\rho)$ increasingly, and hence also places $a+1$ in this
same leftmost position. Therefore,
$\operatorname{Pos}_{\sigma^{\langle a\rangle}}(a+1)
=
\operatorname{Pos}_{\pi}(a+1)$.

\noindent \textbf{Case 2:} $a+1$ lies to the right of $a$ in $\rho$.

By \eqref{eq:same-side}, $a+1$ also lies to the right of $a$ in $\pi$.
If no entry smaller than $a+1$ lies to the right of $a+1$ in $\pi$, then
$a+1$ is never moved. Hence
$\operatorname{Pos}_{\rho}(a+1)
=
\operatorname{Pos}_{\pi}(a+1)$.
Suppose instead that there is an entry smaller than $a+1$ to its right,
and let $i$ be the rightmost entry among $1,2,\ldots,a$. Since $a$
lies to the left of $a+1$, we have $i<a$, and $i$ lies to the right of
$a+1$. Then $a+1$ remains to the left of $i$ throughout all the operations.
We know that $a+1$ is the leftmost entry larger than $a$ lying to the
right of $a$ in $\pi$. Indeed, if some entry $y>a$ occurred between
$a$ and $a+1$, then $y>a+1$. Then $y,a+1,i$ would
form an occurrence of $321$ in $\pi$, a contradiction.
$a+1$ is the leftmost entry larger than $a$ lying to the
right of $a$ in $\rho$. Otherwise, there would be an entry
$y>a$ between $a$ and $a+1$ in $\rho$. Since $a+1$ remains to the left
of $i$, the word $W_i(\rho)$ would contain $y$ before $a+1$,
contradicting the fact that $W_i(\rho)$ is increasing.
Finally, the positions of $1,2,\ldots,a$ are the same in $\rho$ and
$\pi$. Hence the leftmost position to the right of $a$ occupied by an
entry larger than $a$ is the same in both permutations. 
So we obtain
$
\operatorname{Pos}_{\rho}(a+1)
=
\operatorname{Pos}_{\pi}(a+1).
$
Moreover, since $a+1$ lies to the right of $a$,
$\psi^{\langle a\rangle}$ does not move it. Thus
$
\operatorname{Pos}_{\sigma^{\langle a\rangle}}(a+1)
=
\operatorname{Pos}_{\pi}(a+1).
$

This completes the induction. Hence,
$\Psi\circ\Phi
=
\mathrm{id}_{\operatorname{Av}_n(321)}$.
Since both $\operatorname{Av}_n(321)$ and
$\operatorname{Av}_n(231)$ have cardinality equal to the $n$th
Catalan number, it follows that
$\Phi\circ\Psi
=
\mathrm{id}_{\operatorname{Av}_n(231)}$.
\end{proof}

Next, we describe the relationship between our bijection and the
Simion--Schmidt bijection, providing a second proof that our map is indeed
a bijection. For $\pi=\pi_1\pi_2\cdots\pi_n\in S_n$, let
$\mathfrak{lmin}(\pi)
=
\left\{
(i,\pi_i):
\pi_i<\pi_j\text{ for every }j<i
\right\}$
and
$\mathfrak{rmin}(\pi)
=
\left\{
(i,\pi_i):
\pi_i<\pi_j\text{ for every }j>i
\right\}$
denote, respectively, the sets of {\em left-to-right minima} and
{\em right-to-left minima} of $\pi$. The following characterization of the Simion--Schmidt bijection is due
to Claesson and Kitaev \cite{ClaessonKitaev2008}.

\begin{lem}[{\cite[Lemma 6]{ClaessonKitaev2008}}]
\label{lem:Simion--Schmidt-lmin}
Let $\pi\in\operatorname{Av}_n(123)$ and
$\sigma\in\operatorname{Av}_n(132)$. Then the following statements
are equivalent:
\begin{enumerate}
    \item $\operatorname{SS}(\pi)=\sigma$, where
    $\operatorname{SS}$ denotes the Simion--Schmidt bijection;
    \item $\mathfrak{lmin}(\pi)=\mathfrak{lmin}(\sigma)$.
\end{enumerate}
\end{lem}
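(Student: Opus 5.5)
This lemma is the Claesson--Kitaev characterization of the Simion--Schmidt bijection, so I would prove it directly from the construction of $\operatorname{SS}$ together with a uniqueness statement. The plan rests on two facts. First, $\operatorname{SS}$ preserves left-to-right minima, as values and as positions. Second, a permutation in $\operatorname{Av}_n(123)$, and likewise one in $\operatorname{Av}_n(132)$, is uniquely determined by its set $\mathfrak{lmin}$ of left-to-right minima. With both facts in hand, (1)$\Rightarrow$(2) is the first fact. For (2)$\Rightarrow$(1), note that $\operatorname{SS}(\pi)$ and $\sigma$ are both $132$-avoiders with the same set $\mathfrak{lmin}(\pi)$, so uniqueness in $\operatorname{Av}_n(132)$ gives $\operatorname{SS}(\pi)=\sigma$.

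For the first fact, I recall the construction. Given $\pi\in\operatorname{Av}_n(123)$, keep the left-to-right minima fixed in their positions and values. Then fill each remaining position from left to right with the smallest unused value that is larger than the most recent left-to-right minimum to its left. This filling never creates a new left-to-right minimum, because every inserted value exceeds the current minimum. So $\mathfrak{lmin}$ is preserved, and the output avoids $132$, which is the standard property of $\operatorname{SS}$.

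For the uniqueness facts, fix the positions and values of the left-to-right minima. In any permutation with exactly these left-to-right minima, each non-minimum entry must exceed the most recent left-to-right minimum before it. Under $123$-avoidance the non-minima must form a decreasing sequence. If two non-minima $b<c$ appeared in that order, the left-to-right minimum preceding $b$ is smaller than $b$, and together with $b$ and $c$ it forms a $123$. The set of non-minimum values is determined, so a decreasing arrangement is forced, and $\pi$ is determined.

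Under $132$-avoidance I would argue greedily, position by position. Suppose at some non-minimum position we place a value $c$ while a smaller admissible value $b$ is still unused, where admissible means $b$ exceeds the current minimum $m$. Then $b$ must appear later, and $m\,c\,b$ forms a $132$. So each non-minimum position is forced to take the smallest unused value exceeding the current minimum, and $\sigma$ is determined.

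The main obstacle is the $132$ uniqueness step, specifically checking that the greedy choice always exists. At each non-minimum position, some unused value must exceed the current minimum. This should follow from the given data coming from a genuine permutation, namely $\pi$ or $\sigma$, whose entry at that position is such a value. So existence is automatic, and only minimality needs the $132$ argument above.
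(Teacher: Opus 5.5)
Your proof is correct. The paper gives no argument of its own for this lemma: it quotes it from Claesson and Kitaev and uses it only as a black box in the proof of Theorem~\ref{thm:Phi-Simion--Schmidt}. So your write-up supplies a self-contained justification rather than an alternative to one in the text. Your route has two ingredients: $\operatorname{SS}$ fixes $\mathfrak{lmin}$, and a $132$-avoider is determined by its $\mathfrak{lmin}$. It also makes visible that only uniqueness in $\operatorname{Av}_n(132)$ is needed. Your $123$-uniqueness paragraph is correct but superfluous here; it is what one would use to show that $\operatorname{SS}$ is inverted by keeping the minima and writing the remaining entries in decreasing order.

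Two small points of precision.

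\begin{itemize}
\item When you say $\mathfrak{lmin}$ is preserved, also note that the fixed minima stay minima. Every entry placed before a minimum $m$ is either an earlier minimum or exceeds one, and earlier minima exceed $m$.
\item In your last paragraph, the unused admissible value at a non-minimum position $p$ is witnessed by the $132$-avoider's own entry $\sigma_p$. It is unused because, inductively, $\sigma$ agrees with the greedy fill before $p$. It is not witnessed by $\pi_p$, which may already have been consumed. For example, $\pi=2413$ gives $\operatorname{SS}(\pi)=2314$, where $\pi_4=3$ was used at position $2$.
\end{itemize}

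Well-definedness of the greedy fill for $\pi$ is among the standard facts about $\operatorname{SS}$ you cite, or it follows by counting. Let $m$ be the current minimum at position $p$. Then $\pi$ has $p-1$ entries larger than $m$ among its first $p$ positions. Only $p-2$ such values occupy the first $p-1$ positions of the partial output. Hence some value larger than $m$ is still free, and it is not a minimum value, since all later minima are smaller than $m$.
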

We can now prove the following theorem, thereby showing that $\Phi$ is indeed a bijection.

\begin{thm}\label{thm:Phi-Simion--Schmidt} We have
\begin{align*}
\operatorname{rev}\circ\ \Phi\circ\operatorname{rev}
=
\operatorname{SS},
\end{align*}
where $\operatorname{rev}(\pi)=\pi^r$.
\end{thm}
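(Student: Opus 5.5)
The identity will follow from Lemma~\ref{lem:Simion--Schmidt-lmin} once we show that $\operatorname{rev}\circ\Phi\circ\operatorname{rev}$ maps $\operatorname{Av}_n(123)$ into $\operatorname{Av}_n(132)$ and preserves the set of left-to-right minima. The first part is immediate from results already proved. If $\pi\in\operatorname{Av}_n(123)$, then $\operatorname{rev}(\pi)\in\operatorname{Av}_n(321)$. By Lemma~\ref{lem:images-of-Phi-Psi}, $\Phi(\operatorname{rev}(\pi))\in\operatorname{Av}_n(231)$, and reversing again lands in $\operatorname{Av}_n(132)$.

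Under reversal, left-to-right minima of $\pi$ correspond to right-to-left minima of $\rho=\operatorname{rev}(\pi)$, with positions mirrored and values unchanged. So it suffices to prove $\mathfrak{rmin}(\Phi(\rho))=\mathfrak{rmin}(\rho)$ for every $\rho\in S_n$. Since $\Phi$ is a composition of the maps $\varphi^{\langle i\rangle}$, I will prove the stronger claim that each single $\varphi^{\langle i\rangle}$ preserves the set $\mathfrak{rmin}$ as a set of points, i.e.\ as position–value pairs. Recall that $\varphi^{\langle i\rangle}$ only permutes the values in $W_i$ among the positions of $W_i$. All these positions lie to the left of $i$, and all these values exceed $i$.

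The key step is a case check for a fixed $i$; write $\rho'=\varphi^{\langle i\rangle}(\rho)$.
\begin{enumerate}
\item \emph{Positions of $W_i$.} Before and after the rearrangement, such a position holds a value larger than $i$, with $i$ further to the right. Hence it is never a right-to-left minimum in either $\rho$ or $\rho'$.
\item \emph{A right-to-left minimum $y$ outside $W_i$.} Its position and value are unchanged. If $y$ lies to the right of $i$, then no position of $W_i$ is to its right, so the entries to its right are unchanged. If $y$ lies to the left of $i$ (or $y=i$), then $y\le i$ because $y$ is a right-to-left minimum. Every value in $W_i$ exceeds $i$, so whatever values are moved to positions right of $y$ are still larger than $y$. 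In both cases $y$ remains a right-to-left minimum.
\item \emph{A non-minimum $y$ outside $W_i$.} Take a witness $z<y$ to the right of $y$. If $z\notin W_i$, the witness survives unchanged. If $z\in W_i$, then $i<z<y$ and $i$ lies to the right of $z$, hence to the right of $y$. So $i$, which is never moved, is a surviving witness.
\end{enumerate}
Together these give $\mathfrak{rmin}(\rho')=\mathfrak{rmin}(\rho)$.

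Composing over $i=1,\dots,n$ gives $\mathfrak{rmin}(\Phi(\rho))=\mathfrak{rmin}(\rho)$. Therefore $\mathfrak{lmin}(\operatorname{rev}\Phi\operatorname{rev}(\pi))=\mathfrak{lmin}(\pi)$, and Lemma~\ref{lem:Simion--Schmidt-lmin} yields $\operatorname{rev}\circ\Phi\circ\operatorname{rev}=\operatorname{SS}$. Because $\operatorname{SS}$ is a bijection, this also gives the promised second proof that $\Phi$ is bijective.

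The only delicate point is item~2, namely that a right-to-left minimum cannot be undercut by a value shuffled into $W_i$ from its left. This is exactly where the observation that such a $y$ lying left of $i$ must satisfy $y\le i$ is needed.
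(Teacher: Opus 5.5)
Your proposal is correct and follows essentially the same route as the paper. You use Lemma~\ref{lem:images-of-Phi-Psi} to check that the image lies in $\operatorname{Av}_n(132)$, reduce via Lemma~\ref{lem:Simion--Schmidt-lmin} to showing that each $\varphi^{\langle i\rangle}$ preserves $\mathfrak{rmin}$, and use the fact that all moved entries exceed $i$ and lie to its left. Your split of the unmoved entries into right-to-left minima and non-minima, with $i$ as a surviving witness, is a slightly more explicit version of the paper's split by position relative to $i$.
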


\begin{proof}
Fix $\pi\in\operatorname{Av}_n(123)$, and set
$\tau=\operatorname{rev}(\pi)$,
$\rho=\Phi(\tau)$, $\sigma=\operatorname{rev}(\rho)$.
Then
$\tau\in\operatorname{Av}_n(321)$, $\rho\in\operatorname{Av}_n(231)$,
$\sigma\in\operatorname{Av}_n(132)$.
By Lemma~\ref{lem:Simion--Schmidt-lmin}, it suffices to prove that
$
\mathfrak{rmin}(\tau)=\mathfrak{rmin}(\Phi(\tau))$.
Recall that
$
\Phi
=
\varphi^{\langle n\rangle}\circ
\varphi^{\langle n-1\rangle}\circ\cdots\circ
\varphi^{\langle1\rangle}$.
We show that each operation $\varphi^{\langle i\rangle}$ preserves the
right-to-left minima.

The operation $\varphi^{\langle i\rangle}$ rearranges in decreasing order the entries lying in the second quadrant relative to $i$. Every such entry is larger than $i$ and lies to the left of $i$. Hence, none of these entries is a right-to-left minimum, since the smaller entry $i$ lies to its right. Moreover, none of them becomes a right-to-left minimum after the rearrangement.

It remains to consider the entries not moved by
$\varphi^{\langle i\rangle}$. Let $h$ be such an entry. If $h$ lies to
the left of $i$, then necessarily $h<i$. Every moved entry
$j$ satisfies $j>i>h$.
Therefore, rearranging these entries does not affect whether $h$ has
a smaller entry to its right.

If $h=i$ or $h$ lies to the right of $i$, then all entries moved by
$\varphi^{\langle i\rangle}$ lie to the left of $h$. Consequently, the
entries lying to the right of $h$ are unchanged, so the status of $h$
as a right-to-left minimum is also unchanged.

By induction, we conclude that
$\mathfrak{lmin}(\pi)=\mathfrak{lmin}(\sigma)$.
So we have
$
\operatorname{rev}\circ\ \Phi\circ\operatorname{rev}
=
\operatorname{SS}$.
\end{proof}
Next, we use $\Phi$ to establish the equivalence between
 $(321,P_{\ell,x})$ and $(231,P_{\ell,x})$ for $2\leq x \leq \ell$.

\begin{thm}\label{thm:Phi-Psi-preserve-POP}
For $2\leq x\leq\ell$, the maps $\Phi$ and $\Psi$ restrict to
mutually inverse bijections
$\Phi:
\operatorname{Av}_n(321,P_{\ell,x})
\longrightarrow
\operatorname{Av}_n(231,P_{\ell,x})$
and
$
\Psi:
\operatorname{Av}_n(231,P_{\ell,x})
\longrightarrow
\operatorname{Av}_n(321,P_{\ell,x})$.
\end{thm}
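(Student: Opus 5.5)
The plan is to reduce the statement to a local claim about the individual operations $\varphi^{\langle a\rangle}$ and $\psi^{\langle a\rangle}$, and then use Theorem~\ref{thm:Phi-Psi-bijection} for bijectivity. By that theorem, $\Phi$ and $\Psi$ are mutually inverse between $\operatorname{Av}_n(321)$ and $\operatorname{Av}_n(231)$. So it suffices to show two inclusions:
\[
\Phi\bigl(\operatorname{Av}_n(321,P_{\ell,x})\bigr)\subseteq\operatorname{Av}_n(231,P_{\ell,x})
\quad\text{and}\quad
\Psi\bigl(\operatorname{Av}_n(231,P_{\ell,x})\bigr)\subseteq\operatorname{Av}_n(321,P_{\ell,x}).
\]
Together with the inverse relation, these give the restricted bijections. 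Since $\Phi$ and $\Psi$ are compositions of the $\varphi^{\langle a\rangle}$ and $\psi^{\langle a\rangle}$, I will prove the stronger claim that for $2\le x\le\ell$, each single operation $\varphi^{\langle a\rangle}$ and $\psi^{\langle a\rangle}$ maps $P_{\ell,x}$-avoiders to $P_{\ell,x}$-avoiders, for arbitrary permutations in $S_n$. Equivalently, if the image contains $P_{\ell,x}$ then so does the preimage.

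The tool is the criterion from the proof of Theorem~\ref{thm_231_method_1}: $\eta$ contains $P_{\ell,x}$ if and only if some entry $i$ has $Q_{\mathrm{II}}^\eta(i)\ge x-1$ and $Q_{\mathrm{I}}^\eta(i)\ge \ell-x$. Let $\eta'=\varphi^{\langle a\rangle}(\eta)$, and let $i$ be a witness in $\eta'$. The operation only permutes, among a fixed set of positions to the left of $a$, a fixed set of values larger than $a$ (namely $W_a$). I split into three cases according to $i$.

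\textbf{Case $i\le a$.} The moved values all exceed $i$. Positions left of $i$ that lie in the moved set are occupied by values $>i$ both before and after the operation. Hence $Q_{\mathrm{II}}(i)$ is unchanged, and so is $Q_{\mathrm{I}}(i)=n-i-Q_{\mathrm{II}}(i)$. Thus $i$ is also a witness in $\eta$.

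\textbf{Case $i>a$ and $i$ not moved.} Then $i$ is not in $W_a$, so $i$ lies to the right of $a$. All moved positions are therefore left of $i$, and both quadrant counts of $i$ are unchanged.

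\textbf{Case $i\in W_a$.} This is the real case. Here I use $x\ge 2$ and switch the witness to $a$ itself. Every entry larger than $i$ and to the left of $i$ is larger than $a$ and to the left of $a$, so it lies in $W_a$; together with $i$ this gives
\[
Q_{\mathrm{II}}(a)\ge Q_{\mathrm{II}}(i)+1\ge x-1 .
\]
Moreover, $Q_{\mathrm{II}}(a)=|W_a|$ is the same before and after the operation. For the first quadrant, the entries larger than $i$ to the right of $i$ either lie to the right of $a$, and are counted in $Q_{\mathrm{I}}(a)$, or lie in $W_a$ to the right of $i$.

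The main obstacle is controlling that last contribution. The entries of $W_a$ to the right of $i$ are not counted in $Q_{\mathrm{I}}(a)$, so the witness $a$ alone may fail. What I would try first is to choose, in $\eta$, a different element $i^*\in W_a$ as witness. Because $W_a$ occupies the same positions and values before and after the operation, one can count: if in $\eta'$ the element $i$ at position $p$ has at least $x-1$ larger elements before it and at least $\ell-x$ after it, then in $\eta$ take $i^*$ to be the minimum of $W_a\cup\{\text{stuff}\}$ in a suitable window around position $p$. Only elements of $W_a$ change, and the elements outside $W_a$ that exceed $i$ are all larger than $a$. So the required counts transfer, using $a$ to the right as an extra element of the upper level if needed. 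Concretely, I expect the cleanest route to be an extremal choice: the smallest element of $W_a$ in $\eta$ among those at positions $\le p$, or among those at positions $\ge p$, with a short comparison showing that one of these two, or $a$ itself, is a witness. The condition $x\ge2$ enters exactly here: for $x=1$ the element $a$ cannot absorb the loss, which matches $(321,P_{\ell,1})$ being a singleton class in Table~\ref{tab:wilf-classes}.

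The argument for $\psi^{\langle a\rangle}$ is identical, since it never uses whether the rearrangement of $W_a$ is increasing or decreasing. The same case analysis therefore gives both inclusions, and with Theorem~\ref{thm:Phi-Psi-bijection} the restricted maps are mutually inverse.
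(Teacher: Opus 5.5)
Your reduction is sound. By Theorem~\ref{thm:Phi-Psi-bijection} the two inclusions suffice, and showing that every single step preserves $P_{\ell,x}$-avoidance would give them. Your first two cases are also correct.

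The gap is the case $i\in W_a$. You leave it as an expectation (``an extremal choice \ldots{} with a short comparison''). You also assert that the argument never uses whether $W_a$ is rearranged increasingly or decreasingly. No such argument can exist, because for arbitrary rearrangements of $W_a$ the claim is false. Take $\ell=3$, $x=2$, $\eta=2341$ and $a=1$. Then $\eta$ avoids $P_{3,2}$. But rearranging $W_1$ in the order $3,2,4$ gives $3241$, in which $324$ is an occurrence of $P_{3,2}$. So the search for an alternative witness $i^*$ is aimed at a false statement, and your account of where $x\ge 2$ enters is misplaced.

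The missing idea is to use the specific order each operation produces; then the hard case is immediate.

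For $\eta'=\varphi^{\langle a\rangle}(\eta)$, the word $W_a$ is decreasing in $\eta'$. So the contribution you worry about (entries of $W_a$ to the right of $i$ that exceed $i$) is empty. Every entry counted in $Q_{\mathrm I}^{\eta'}(i)$ lies to the right of $a$ and exceeds $a$, so $Q_{\mathrm I}^{\eta'}(a)\ge Q_{\mathrm I}^{\eta'}(i)\ge \ell-x$. Meanwhile $Q_{\mathrm{II}}^{\eta'}(a)=|W_a|\ge Q_{\mathrm{II}}^{\eta'}(i)+1\ge x$. Thus $a$ is a witness in $\eta'$, and since its quadrant counts are unchanged, also in $\eta$. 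This holds for every $x$, including $x=1$.

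For $\eta'=\psi^{\langle a\rangle}(\eta)$, every entry to the left of $i$ that exceeds $i$ lies in $W_a$, which is increasing in $\eta'$. Hence $Q_{\mathrm{II}}^{\eta'}(i)=0<x-1$. This is exactly where $x\ge 2$ is used. For $x=1$ the step genuinely fails: with $\ell=3$, $\psi^{\langle 1\rangle}(4321)=2341$ creates an occurrence of $P_{3,1}$.

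With these two observations your single-step strategy is a complete proof, and a shorter one than the paper's. The paper instead inducts over the entries $1,\dots,a+1$ of the partial composites. For $\Phi$ it uses that each earlier $W_i$ stays decreasing; for $\Psi$ it uses that the last $\psi^{\langle i\rangle}$ moving $a+1$ forces $Q_{\mathrm{II}}(a+1)=0$.
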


\begin{proof}
Let $\pi\in\operatorname{Av}_n(321,P_{\ell,x})$ and
$\sigma\in\operatorname{Av}_n(231,P_{\ell,x})$.
For convenience, define
$
\pi^{\langle a\rangle}
=
\bigl(
\varphi^{\langle a\rangle}\circ\cdots\circ
\varphi^{\langle1\rangle}
\bigr)(\pi)$
and
$
\sigma^{\langle a\rangle}
=
\bigl(
\psi^{\langle a\rangle}\circ\cdots\circ
\psi^{\langle1\rangle}
\bigr)(\sigma)$.
Recall that an entry $v$ can occupy the $x$-th position of an occurrence
of $P_{\ell,x}$ if and only if
$Q_{\mathrm{II}}(v)\geq x-1$ and
$Q_{\mathrm I}(v)\geq\ell-x$.
We first prove that $\Phi$ preserves avoidance of $P_{\ell,x}$. We
show inductively that if an occurrence of $P_{\ell,x}$ appears in
$\pi^{\langle a\rangle}$, then none of $1,2,\ldots,a+1$
can occupy its $x$-th position.

First consider the entry $1$. It is easy to check that rearranging entries does not change
the numbers of entries in the first and second quadrants of $1$.
Therefore,
$Q_{\mathrm I}^{\pi^{\langle a\rangle}}(1)
=
Q_{\mathrm I}^{\pi}(1)$ and
$Q_{\mathrm{II}}^{\pi^{\langle a\rangle}}(1)
=
Q_{\mathrm{II}}^{\pi}(1)$.
Since $\pi$ avoids $P_{\ell,x}$, the entry $1$ cannot occupy the
$x$-th position of an occurrence of $P_{\ell,x}$ in any
$\pi^{\langle a\rangle}$.

Now suppose inductively that, in $\pi^{\langle a-1\rangle}$, none of
$1,\ldots,a$ can occupy the $x$-th position of an occurrence. The
permutations $\pi^{\langle a-1\rangle}$ and
$\pi^{\langle a\rangle}$ differ only by
$\varphi^{\langle a\rangle}$, which rearranges only entries larger than
$a$. Hence, for every $j\leq a$,
$Q_{\mathrm I}^{\pi^{\langle a\rangle}}(j)
=
Q_{\mathrm I}^{\pi^{\langle a-1\rangle}}(j)$
and
$Q_{\mathrm{II}}^{\pi^{\langle a\rangle}}(j)
=
Q_{\mathrm{II}}^{\pi^{\langle a-1\rangle}}(j)$.
Thus none of $1,\ldots,a$ can occupy the $x$-th position in
$\pi^{\langle a\rangle}$. It remains to consider $a+1$.
We distinguish two cases.

\noindent \textbf{Case 1:} $a+1$ has not been acted on by any of
$\varphi^{\langle1\rangle},\ldots,\varphi^{\langle a\rangle}$.

In this case, whenever $\varphi^{\langle i\rangle}$ is applied, the entry
$i$ lies to the left of $a+1$. 
Thus all the rearrangements take place
to the left of $a+1$, and hence
$Q_{\mathrm I}^{\pi^{\langle a\rangle}}(a+1)
=
Q_{\mathrm I}^{\pi}(a+1)$
and
$Q_{\mathrm{II}}^{\pi^{\langle a\rangle}}(a+1)
=
Q_{\mathrm{II}}^{\pi}(a+1)$.
So $a+1$ cannot occupy the $x$-th position in $\pi^{\langle a\rangle}$.

\noindent \textbf{Case 2:} $a+1$ has been acted on by some
$\varphi^{\langle i\rangle}$.

Let $\varphi^{\langle i\rangle}$ be the last operation acting on $a+1$.
Suppose, for contradiction, that $a+1$ occupies the $x$-th position
of an occurrence of $P_{\ell,x}$. Then
$Q_{\mathrm{II}}^{\pi^{\langle a\rangle}}(a+1)\geq x-1$ and
$Q_{\mathrm I}^{\pi^{\langle a\rangle}}(a+1)\geq\ell-x$.
So we have 
$
Q_{\mathrm{II}}^{\pi^{\langle a\rangle}}(i)
\geq
Q_{\mathrm{II}}^{\pi^{\langle a\rangle}}(a+1)+1
\geq x$.
Since $i\leq a$, the induction hypothesis implies that $i$ cannot
occupy the $x$-th position of an occurrence. Therefore,
$Q_{\mathrm I}^{\pi^{\langle a\rangle}}(i)<\ell-x$.
If $x=\ell$, the last inequality is already impossible. Otherwise,
since
$Q_{\mathrm I}^{\pi^{\langle a\rangle}}(a+1)\geq\ell-x$ and
$Q_{\mathrm I}^{\pi^{\langle a\rangle}}(i)<\ell-x$,
there exists an entry $j>a+1$ such that
$\operatorname{Pos}_{\pi^{\langle a\rangle}}(a+1)
<
\operatorname{Pos}_{\pi^{\langle a\rangle}}(j)
<
\operatorname{Pos}_{\pi^{\langle a\rangle}}(i)$.
This contradicts the fact that the
second quadrant subword of $i$ is decreasing. 
Hence $a+1$ cannot
occupy the $x$-th position of an occurrence of $P_{\ell,x}$.
This completes the induction. Therefore, $\Phi(\pi)\in\operatorname{Av}_n(231,P_{\ell,x})$.

We next prove that $\Psi$ preserves avoidance of $P_{\ell,x}$. 
We proceed with the same line of reasoning to prove it.

The position of $1$ does not change, and neither do its
first and second quadrant counts. 
Now suppose the assertion holds
for $\sigma^{\langle a-1\rangle}$. Since
$\psi^{\langle a\rangle}$ rearranges only entries larger than $a$,
it remains only to consider $a+1$.
Again, there are two cases.

\noindent \textbf{Case 1:} $a+1$ has not been acted on by any
$\psi^{\langle i\rangle}$.

In this case, the numbers of entries in the first and second quadrants
of $a+1$ are unchanged. Since $\sigma$ avoids $P_{\ell,x}$, the entry
$a+1$ cannot occupy the $x$-th position of an occurrence in
$\sigma^{\langle a\rangle}$.

\noindent \textbf{Case 2:} $a+1$ has been acted on by some
$\psi^{\langle i\rangle}$.

Let $\psi^{\langle i\rangle}$ be the last operation acting on $a+1$.
This operation arranges the second-quadrant subword of $i$ in
increasing order. Consequently, there is no entry larger than $a+1$
to the left of $a+1$. Hence
$Q_{\mathrm{II}}^{\sigma^{\langle a\rangle}}(a+1)=0$.
Since $x\geq2$, we have
$Q_{\mathrm{II}}^{\sigma^{\langle a\rangle}}(a+1)
=0<x-1$.
Therefore $a+1$ cannot occupy the $x$-th position of an occurrence of
$P_{\ell,x}$.
The induction is complete, and hence
$\Psi(\sigma)\in\operatorname{Av}_n(321,P_{\ell,x})$.
\end{proof}

The bijection in Theorem~\ref{thm:Phi-Psi-preserve-POP} implies that, for every $x$ with $2\leq x\leq\ell$, the pairs $(321,P_{\ell,x})$ and $(231,P_{\ell,x})$ are Wilf-equivalent.

\section{Non-Wilf-equivalence of certain pairs of patterns}\label{S3}

We begin by considering the special case of $(321,P_{\ell,1})$. To determine
its avoidance sequence and distinguish this class from all the remaining
pairs, we need a refinement of the Catalan enumeration of $123$-avoiding
permutations according to a prescribed decreasing suffix. This refinement is
given by the Catalan triangle and is recorded in the following lemma.

\begin{lem}\label{321_de}
For $n\geq k\geq1$, the number of permutations in $\operatorname{Av}_n(123)$ with a decreasing
suffix of length $k$ is the Catalan triangle number
\begin{equation*}
C(n,n-k)=\frac{k+1}{n+1}\binom{2n-k}{n}.    
\end{equation*}
\end{lem}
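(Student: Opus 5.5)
The plan is to prove the formula by induction on $n$, using the ballot recurrence of the Catalan triangle. Throughout, ``a decreasing suffix of length $k$'' is read as ``the last $k$ entries are decreasing'', so the suffix has length \emph{at least} $k$. This reading matches the small cases. For $n=2$ the formula gives $2$ for $k=1$ and $1$ for $k=2$. For $n=3,k=2$ it gives $3$, counting $132,231,321$.

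Let $f(n,k)$ be the number of $\pi\in\operatorname{Av}_n(123)$ with $\pi_{n-k+1}>\cdots>\pi_n$. The boundary values are $f(n,1)=C_n$ and $f(n,n)=1$ (only $n(n-1)\cdots1$), and both agree with $\frac{k+1}{n+1}\binom{2n-k}{n}$. The numbers $T(n,k):=\frac{k+1}{n+1}\binom{2n-k}{n}$ satisfy
\[
T(n,k)=T(n,k+1)+T(n-1,k-1),\qquad 2\le k\le n-1 .
\]
This is the standard identity $C(n,m)=C(n,m-1)+C(n-1,m)$ with $m=n-k$, and is a routine binomial check. It therefore suffices to prove $f(n,k)=f(n,k+1)+f(n-1,k-1)$.

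The set counted by $f(n,k)$ splits into those permutations whose last $k+1$ entries are decreasing, counted by $f(n,k+1)$, and those with an ascent $\pi_{n-k}<\pi_{n-k+1}$. For the second family I would build a bijection onto $\{\tau\in\operatorname{Av}_{n-1}(123):\tau \text{ has its last } k-1 \text{ entries decreasing}\}$. Avoidance of $123$ forces two constraints around the ascent at position $n-k$. First, $\pi_{n-k}$ is a left-to-right minimum. Second, every entry after $\pi_{n-k+1}$ is smaller than $\pi_{n-k+1}$, which is automatic here since the suffix is decreasing. The natural map deletes the peak $\pi_{n-k+1}$ and standardizes. The inverse must reinsert a value at position $n-k+1$ so that it creates an ascent with the entry before it and no $123$. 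My first attempt at a canonical choice is the largest value $v$ keeping the word $123$-avoiding; in the standardized picture this is $v=n$ when possible, shifting the others.

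The main obstacle is showing this reinsertion is well defined and unique, and I am not sure it is unique. If it is not, I would switch to a path argument. Encode $\operatorname{Av}_n(123)$ by Dyck paths of semilength $n$ using the left-to-right minima (values and positions), which by Lemma~\ref{lem:Simion--Schmidt-lmin} determine the permutation. The aim is to arrange the encoding so that ``last $k$ entries decreasing'' becomes ``the path ends with at least $k$ down-steps''. Such paths correspond to paths from the origin to $(2n-k,k)$ staying weakly above the axis. By the ballot theorem their number is
\[
\frac{k+1}{2n-k+1}\binom{2n-k+1}{n-k}=\frac{k+1}{n+1}\binom{2n-k}{n},
\]
which is exactly the claimed count. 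The difficulty in this fallback is the same in spirit: choosing the encoding direction so that the decreasing suffix becomes a terminal run of down-steps.
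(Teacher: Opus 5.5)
Your reduction is sound. The boundary values $f(n,1)=C_n$, $f(n,n)=1$ and the ballot recurrence $T(n,k)=T(n,k+1)+T(n-1,k-1)$ determine everything by induction, so the whole proof rests on showing $f(n,k)-f(n,k+1)=f(n-1,k-1)$. That is exactly the step you leave open, and the map you propose for it does not work.

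Take $n=4$, $k=2$. The permutations $3142$ and $4132$ both lie in $\operatorname{Av}_4(123)$ and satisfy $\pi_2<\pi_3>\pi_4$. Deleting $\pi_3$ and standardizing sends both to $312$, so the map is not injective. It is not surjective either: $132$ (and likewise $231$) has no preimage, because the required ascent $\pi_2<\pi_3$ together with $\pi_1<\pi_2$ forms a $123$. So no reinsertion rule at position $n-k+1$ can serve as an inverse.

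The Dyck-path fallback is not yet an argument, since you never specify the encoding. The standard encoding built from left-to-right minima ends with a down-run of length $n-\operatorname{Pos}_\pi(1)+1$. Hence ``at least $k$ final down-steps'' picks out $\{\pi:\operatorname{Pos}_\pi(1)\le n-k+1\}$, which is a different set. For $n=3$, $k=2$ it is $\{132,213,312\}$, not $\{132,231,321\}$. Showing that these two sets are equinumerous is the original problem again.

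The missing idea is the one the paper uses: condition on the position of the entry $1$. In a $123$-avoiding permutation everything to the right of $1$ is decreasing. So if $\operatorname{Pos}_\pi(1)=i$ and $2\le k\le n$, the last $k$ entries are decreasing exactly when either $i\le n-k$, or $i=n$ and $\pi_{n-k+1}>\cdots>\pi_{n-1}$. Positions $n-k<i<n$ are impossible, because $1=\pi_i<\pi_{i+1}$ would lie inside the suffix. Deleting $1$ and standardizing gives
\[
f(n,k)=\sum_{j=k}^{n-1}f(n-1,j)+f(n-1,k-1).
\]
Subtracting the same identity with $k+1$ in place of $k$ yields $f(n,k)=f(n,k+1)+f(n-1,k-1)$ at once, which completes your framework. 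The paper states this same decomposition directly, as the partial-sum form $a_{n,k}=\sum_{i=0}^{k}a_{n-1,i}$ of the Catalan-triangle recurrence.
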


\begin{proof}
Let $a_{n,k}$ be the number of permutations in $\operatorname{Av}_n(123)$
with a decreasing suffix of length $n-k$. Since $\pi$ avoids $123$, the
entries to the right of $1$ must be decreasing. If
$\operatorname{Pos}_\pi(1)=i$, then deleting $1$ yields
$a_{n-1,i-1}$ possibilities. Therefore,
$a_{n,k}=\sum_{i=0}^k a_{n-1,i}$, with $a_{n,0}=1$.
This is precisely the recurrence defining the Catalan triangle numbers.
Hence, $a_{n,k}=C(n,k)$.
\end{proof}

\begin{thm}\label{thm-123-Pll}
For $n\ge\ell$,
\begin{equation*}
    s_n(123,P_{\ell,\ell})=\frac{2n-2\ell+3}{n+1}\binom{2\ell-2}n.
\end{equation*}
\end{thm}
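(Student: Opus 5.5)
The plan is to reduce the count to 123-avoiders of bounded length with a prescribed decreasing suffix, so that Lemma~\ref{321_de} applies, and then to evaluate the resulting sum in closed form.

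\emph{Step 1: reformulate the condition.} For $x=\ell$ the condition $Q_{\mathrm I}\ge \ell-x=0$ is vacuous. Hence $\pi$ avoids $P_{\ell,\ell}$ if and only if $Q^\pi_{\mathrm{II}}(v)\le \ell-2$ for every entry $v$, i.e.\ every entry has at most $\ell-2$ larger entries to its left. Two consequences follow immediately.
\begin{itemize}
\item Since $Q_{\mathrm{II}}(1)=\operatorname{Pos}_\pi(1)-1$, the entry $1$ lies in one of the first $\ell-1$ positions.
\item By 123-avoidance the entries to the right of $1$ form a decreasing word. For each such entry $v$, every larger entry lies to its left, so $Q_{\mathrm{II}}(v)=n-v$. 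The condition then forces $v\ge n-\ell+2$, so the entries after $1$ are among the top $\ell-1$ values.
\end{itemize}

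\emph{Step 2: characterize the avoiders.} I would push this analysis through the left-to-right minima. In a 123-avoider the non-left-to-right-minima form a decreasing subsequence. For such an entry $v$, the larger entries to its left therefore include every larger non-minimum. This should show that the constraint $Q_{\mathrm{II}}\le\ell-2$ is binding only on a window of at most $\ell-1$ consecutive values/positions, with the rest of the permutation forced to be decreasing. Concretely, I expect $\pi$ to be determined by a 123-avoider $\rho$ of length $\ell-1$ (the pattern of the first $\ell-1$ positions, or equivalently of the relevant small entries), together with how the remaining $n-\ell+1$ large entries are interleaved. That interleaving should be governed by the length of a decreasing suffix of $\rho$. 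Summing over that suffix length with Lemma~\ref{321_de} then gives
\[
s_n(123,P_{\ell,\ell})=\sum_{k} C(\ell-1,\ell-1-k)\cdot c_{n,\ell}(k)
\]
for an explicit elementary count $c_{n,\ell}(k)$.

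\emph{Step 3: evaluate the sum.} The last step is to show that this sum equals $\frac{2n-2\ell+3}{n+1}\binom{2\ell-2}{n}$, either by a Vandermonde/ballot-type identity or by recognizing the summation as a single Catalan-triangle entry. Note that $\frac{2m+3}{n+1}\binom{2\ell-2}{n}$ with $m=n-\ell$ has exactly the shape $\frac{k+1}{N+1}\binom{\cdot}{\cdot}$ of a ballot number. So I expect a direct bijection to 123-avoiders of length $\ell-1$ with decreasing suffix of length exactly $2(n-\ell)+2$, or a nearby index shift of that. I would try to make this identification explicit before resorting to an identity. I would sanity-check against small cases, such as $\ell=3$, $n=3$, where the count is $3$ (namely $132,213,312$), and $n=\ell$.

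The main obstacle is Step~2: pinning down exactly which 123-avoiders satisfy $Q_{\mathrm{II}}\le\ell-2$ everywhere, and encoding them so that Lemma~\ref{321_de} applies cleanly. Once that structure is right, the enumeration should be routine.
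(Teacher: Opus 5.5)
\textbf{There is a genuine gap: Steps 2 and 3 are only anticipated, not proved.} Your Step 1 is correct. For $x=\ell$ the first-quadrant condition is vacuous, so avoidance means $Q_{\mathrm{II}}(v)\le\ell-2$ for every entry. Hence $\operatorname{Pos}_\pi(1)\le\ell-1$, and every entry after $1$ is at least $n-\ell+2$; these facts already force $n\le 2\ell-2$. But the whole content of the theorem is in Steps 2 and 3, which you state as expectations (``I expect'', ``should show''). You never say which $\rho$ and which interleavings actually occur, never define $c_{n,\ell}(k)$, and never prove a summation identity. As it stands there is no argument.

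\textbf{Your concrete prediction is false.} Take $\ell=n=3$. There is exactly one $123$-avoider of length $\ell-1=2$ with decreasing suffix of length $2(n-\ell)+2=2$, namely $21$, but the true count is $3$. Also, once $n>(3\ell-3)/2$ the prescribed suffix is longer than $\ell-1$, so your count would be $0$. Yet for $n=2\ell-2$ the answer is $1$, given by the permutation $(\ell-1)\cdots 2\,1\,(2\ell-2)\cdots\ell$. In fact, for $\ell\le n\le 2\ell-2$,
\[
\frac{2n-2\ell+3}{n+1}\binom{2\ell-2}{n}=C(n,2\ell-2-n).
\]
By Lemma~\ref{321_de}, this counts $123$-avoiders of length $n$, not $\ell-1$, with decreasing suffix of length $2n-2\ell+2$.

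\textbf{The missing idea.} No global characterization is needed. Instead, peel off the entry $1$ and carry the decreasing-suffix length as an auxiliary parameter, as the paper does. Deleting $1$ preserves $123$-avoidance and leaves $Q_{\mathrm{II}}$ of every other entry unchanged. Conversely, $1$ can be inserted at any position $j\le\ell-1$, provided everything after it is decreasing. Let $h_k(n)$ be the number of permutations in $\operatorname{Av}_n(123,P_{\ell,\ell})$ whose last $n-\ell+k+1$ entries decrease. For $n\ge\ell$ this gives
\[
h_k(n)=\sum_{i=1}^{\ell-k-1}h_{\ell-i}(n-1),
\]
that is, $h_k(n)=h_{k+1}(n)+h_{k+1}(n-1)$, with $h_{\ell-1}(n)=0$.

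Lemma~\ref{321_de} is needed only for the initial values at length $\ell-1$, where $P_{\ell,\ell}$ cannot occur:
\[
h_k(\ell-1)=\frac{k+1}{\ell}\binom{2\ell-k-2}{\ell-1}.
\]
Descending induction on $k$ then gives
\[
h_k(\ell-1+t)=\frac{2t+k+1}{\ell+t}\binom{2\ell-k-2}{\ell+t-1},
\]
and setting $k=0$, $t=n-\ell+1$ yields the theorem.

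Unrolling this recurrence is what would produce your coefficients $c_{n,\ell}(k)$. So your instinct that the length-$(\ell-1)$ Catalan triangle controls the count is sound, but the recurrence is the real content, and your proposal does not supply it.
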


\begin{proof}
Let $n\geq\ell$, let $a(n)=s_n(123,P_{\ell,\ell})$, and let $h_i(n)$
denote the number of permutations in
$\operatorname{Av}_n(123,P_{\ell,\ell})$ with a decreasing suffix of length
$n-\ell+i+1$. For any
$\pi\in\operatorname{Av}_n(123,P_{\ell,\ell})$, we have
$\operatorname{Pos}_\pi(1)=j$ for some $1\leq j\leq\ell-1$.
Since the entries to the right of $1$ must be decreasing, deleting $1$
yields $h_{\ell-j}(n-1)$ possibilities. Therefore
\begin{align*}
  a(n)=h_0(n)=\sum_{i=1}^{\ell-1}h_{\ell-i}(n-1).
\end{align*}
For any $\pi\in\operatorname{Av}_n(123,P_{\ell,\ell})$ with a decreasing
suffix of length $n-\ell+k+1$, we have
$1\leq \operatorname{Pos}_\pi(1)\leq \ell-k-1$. So
\begin{align*}
h_k(n)=\sum_{i=1}^{\ell-k-1}h_{\ell-i}(n-1).
\end{align*}
In particular, $h_{\ell-1}(n)=0$ for $n\geq\ell$. By
Lemma~\ref{321_de},
\begin{align*}
h_k(\ell-1)=\frac{k+1}{\ell}\binom{2\ell-k-2}{\ell-1}.
\end{align*}
We now prove by induction that
\begin{align*}
h_k(\ell-1+t)
=
\dfrac{2t+k+1}{\ell+t}
\binom{2\ell-k-2}{\ell+t-1}.
\end{align*}
For $k=\ell-1$, the formula is immediate.
Suppose that the formula holds for $k+1$, then
\begin{align}
h_k(\ell-1+t)=&\sum_{i=1}^{\ell-k-2}h_{\ell-i}(\ell-2+t)+h_{k+1}(\ell-2+t)\notag\\
=&h_{k+1}(\ell-1+t)+h_{k+1}(\ell-2+t)\notag\\
=&\dfrac{2t+k+2}{\ell+t}\binom{2\ell-k-3}{\ell+t-1}+\dfrac{2t+k}{\ell+t-1}\binom{2\ell-k-3}{\ell+t-2}\notag\\
=&\dfrac{2t+k+2}{\ell+t}\binom{2\ell-k-3}{\ell+t-1}+\dfrac{2t+k}{\ell-t-k-1}\binom{2\ell-k-3}{\ell+t-1}\notag\\
=&\dfrac{2t+k+1}{\ell+t}\binom{2\ell-k-2}{\ell+t-1}\notag
\end{align}
So $a(n)=h_0(\ell-1+(n-\ell+1))=\frac{2n-2\ell+3}{n+1}\binom{2\ell-2}n$.
\end{proof}
\begin{cor}\label{cor-321}
The set $\{(321,P_{\ell,1}),(123,P_{\ell,\ell})\}$ is a
Wilf-equivalence class.
\end{cor}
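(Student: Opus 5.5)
The corollary has two parts. First, the two pairs in the set are Wilf-equivalent. Second, no other pair $(\tau,P_{\ell,x})$ with $\tau\in S_3$ and $x\in[\ell]$ is Wilf-equivalent to them.

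For the first part I would use the reverse operation. Reversal sends $321$ to $123$. An occurrence of $P_{\ell,1}$, whose first entry is smaller than all others, reverses to a length-$\ell$ subsequence whose last entry is smaller than all others, i.e. an occurrence of $P_{\ell,\ell}$. Hence $\operatorname{rev}$ maps $\operatorname{Av}_n(321,P_{\ell,1})$ bijectively onto $\operatorname{Av}_n(123,P_{\ell,\ell})$. In particular $s_n(321,P_{\ell,1})$ is given by the formula of Theorem~\ref{thm-123-Pll}.

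For the second part I would rely on vanishing rather than on any delicate numerical comparison. At $n=2\ell-1$, Theorem~\ref{thm-123-Pll} gives
$$s_{2\ell-1}(123,P_{\ell,\ell})=\frac{2\ell+1}{2\ell}\binom{2\ell-2}{2\ell-1}=0.$$
So both pairs in the class have no avoiders of length $2\ell-1$. It then suffices to show that every other pair has at least one avoider of every length $n$.

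\begin{itemize}
\item \textbf{Families $(213,P_{\ell,x})$, $(231,P_{\ell,x})$, $(321,P_{\ell,x})$ with $x\ge2$.} Take $12\cdots n$. Every subsequence is increasing, so its $x$-th entry is never its minimum, and the identity avoids $P_{\ell,x}$. It also avoids $213$, $231$ and $321$.
\item \textbf{Pairs $(213,P_{\ell,1})$ and $(231,P_{\ell,1})$.} Take $n(n-1)\cdots1$. In every subsequence the first entry is the largest, so it avoids $P_{\ell,1}$. It also avoids $213$ and $231$.
\item \textbf{The reversed families.} Among the $6\ell$ pairs, these are handled by applying reversal to the permutations above.
\end{itemize}

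Thus every pair other than $(321,P_{\ell,1})$ and $(123,P_{\ell,\ell})$ satisfies $s_{2\ell-1}\ge1$, so none of them can be Wilf-equivalent to these two. The argument is uniform in $\ell\ge3$.

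The only place I would be careful is checking that the formula of Theorem~\ref{thm-123-Pll} legitimately applies at $n=2\ell-1\ge\ell$ and really yields zero. As a sanity check I would confirm directly that every $123$-avoider of length $2\ell-1$ contains $P_{\ell,\ell}$: $1$ must sit at position at most $\ell-1$ in order to avoid $P_{\ell,\ell}$, and then the decreasing suffix after $1$ forces a contradiction.
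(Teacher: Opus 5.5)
Your proposal is correct and is essentially the paper's proof. You use the vanishing $s_n(123,P_{\ell,\ell})=0$ for $n\ge 2\ell-1$ from Theorem~\ref{thm-123-Pll}, and you observe that $12\cdots n$ or $n(n-1)\cdots 1$ avoids every other pair; you also make explicit the reversal step identifying $(321,P_{\ell,1})$ with $(123,P_{\ell,\ell})$, which the paper leaves implicit.
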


\begin{proof}
By Theorem~\ref{thm-123-Pll}, for $n\geq 2\ell-1$, there are no
permutations of length $n$ avoiding both patterns in either of the two
pairs in question. However, for every other pair $(\tau,P_{\ell,x})$,
where $\tau$ is a pattern of length three, at least one of the permutations
$12\cdots n$ and $n(n-1)\cdots1$ avoids both $\tau$ and $P_{\ell,x}$.
Hence, the corresponding avoidance class is nonempty for arbitrarily large
$n$. Therefore, $(321,P_{\ell,1})$ and $(123,P_{\ell,\ell})$ are not
Wilf-equivalent to any other pair.
\end{proof}

Next, we show that, for $2\leq x\leq \ell$, the classes $(231,P_{\ell,x})$ are pairwise inequivalent. 
We prove this result by constructing an injection.
Define
$\mathcal C_n=\{(c_1,\dots,c_n)\in\mathbb Z^n:0\le c_i\le n-i\}$.
For $\pi\in S_n$, we can define $\xi(\pi)=(Q^\pi_{\mathrm{II}}(1),\dots,Q^\pi_{\mathrm{II}}(n))$.
Conversely, given any integer sequence $c=(c_1,\dots,c_n)\in\mathcal C_n$,
we can construct a permutation $\eta(c)\in S_n$. Start with $n$, and then insert
$n-1,n-2,\dots,1$. When inserting $i$, the entries already placed are precisely
$i+1,\dots,n$, all of which are larger than $i$. Since $0\le c_i\le n-i$,
there is a unique way to insert $i$ so that exactly $c_i$ of the already
placed entries lie to its left. Clearly $\xi\circ\eta=\text{id}$ and
$\eta\circ\xi=\text{id}$.
The following characterization is equivalent, by reverse-complement, 
to a characterization of inversion tables of $312$-avoiding permutations given by Bényi \cite{Benyi2014}. 
We restate the result and include a short proof for completeness.

\begin{lem}\label{lem_231_encoding}
For $n\geq 3$, $\pi\in\text{Av}_n(231)$ if and only if for $(c_1,\dots,c_n)=\xi(\pi)$,
$c_i\le c_{i+1}+1,i\in[n-1]$.
\end{lem}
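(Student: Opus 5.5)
The plan is to argue directly from the quadrant description of $231$-avoidance used in the proof of Lemma~\ref{lem:images-of-Phi-Psi}: $\pi\in\operatorname{Av}_n(231)$ if and only if $W_i(\pi)$ is decreasing for every $i\in[n]$. An occurrence of $231$ is a triple $b\,c\,a$ with $a<b<c$ and $b$ before $c$ before $a$, that is, two entries of $W_a(\pi)$ appearing in increasing order. We compare $c_i=Q^\pi_{\mathrm{II}}(i)=|W_i(\pi)|$ with $c_{i+1}$ by splitting on the relative positions of $i$ and $i+1$.

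\textbf{Forward direction.} Assume $\pi$ avoids $231$ and fix $i\in[n-1]$.
\begin{itemize}
\item If $i+1$ lies to the left of $i$, then every entry of $W_{i+1}(\pi)$ lies in $W_i(\pi)$. The only other possible element of $W_i(\pi)$ is $i+1$ itself, so $c_i=c_{i+1}+1$.
\item If $i+1$ lies to the right of $i$, then $W_i(\pi)\subseteq W_{i+1}(\pi)$. Indeed, an entry $y>i$ to the left of $i$ satisfies $y\ne i+1$, so $y>i+1$, and it also lies to the left of $i+1$. Hence $c_i\le c_{i+1}$.
\end{itemize}
Note that this direction does not even use $231$-avoidance: the inequality $c_i\le c_{i+1}+1$ holds for every permutation. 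So the real content of the statement lies in the converse.

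The converse needs more care, and at first sight it looks false. For example, $\pi=231$ gives $c=(2,0,0)$, and $c_1\le c_2+1$ fails. So it is plausible that the forward direction genuinely uses avoidance. I recheck the case where $i+1$ lies to the right of $i$: then $c_i\le c_{i+1}$ as shown. In the case where $i+1$ lies to the left of $i$, $W_i(\pi)$ also contains the entries $y>i+1$ lying strictly between $i+1$ and $i$, which I overlooked above. Such a $y$ gives the triple $(i+1)\,y\,i$, which is an occurrence of $231$. So avoidance forces there to be no such $y$, and then $c_i=c_{i+1}+1$. Conversely, this shows in general that $c_i-c_{i+1}-1$ counts such $y$ whenever $i+1$ precedes $i$.

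\textbf{Converse.} Suppose $c_i\le c_{i+1}+1$ for all $i$, but $\pi$ contains $231$. Choose an occurrence $b\,c\,a$ with $b-a$ minimal.
\begin{itemize}
\item If $b=a+1$, then $a+1$ precedes $a$ and $c$ lies between them, so $c_a\ge c_{a+1}+2$, a contradiction.
\item If $b>a+1$, consider the entry $a+1<b$. If $a+1$ lies to the right of $c$, then either $b\,c\,(a+1)$ (when $a+1$ precedes $a$) or $(a+1)$ in the role of the smallest entry gives a closer occurrence. Concretely, if $a+1$ is after $c$, then $b\,c\,(a+1)$ is a $231$ with $b-(a+1)<b-a$, contradicting minimality. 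If $a+1$ is before $c$ and after $a$'s left side, then $(a+1)\,c\,a$ is an occurrence with a smaller difference, again a contradiction.
\end{itemize}

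The main obstacle is handling this position case analysis for $a+1$ cleanly; any position of $a+1$ other than these should reduce to one of the two cases just described.
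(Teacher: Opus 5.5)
Your proof is correct and follows the paper's argument. For the forward direction, when $i+1$ precedes $i$, $c_i-c_{i+1}-1$ counts the entries larger than $i+1$ lying between them, each of which gives a $231$. For the converse, you take an occurrence $b\,c\,a$ with $b-a$ minimal and rule out $b>a+1$ by placing $a+1$ before or after $c$, exactly as the paper does.

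In a final write-up, delete the initial claim (which you yourself retracted) that the inequality holds for every permutation. Also note that ``before $c$'' and ``after $c$'' already exhaust the possible positions of $a+1$, so the qualifier ``after $a$'s left side'' and your closing worry about further cases are unnecessary.
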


\begin{proof}
Suppose first that, for some $i$, we have $c_i>c_{i+1}+1$. Then $\text{Pos}_\pi(i+1)<\text{Pos}_\pi(i)$
and there must be at least one element $j$ larger than $i$ located between $i$
and $i+1$. But $i+1,j,i$ form a $231$ pattern. Therefore, if $\pi$
avoids $231$, then necessarily $c_i\le c_{i+1}+1$.

Conversely, suppose that $\pi$ contains a $231$ pattern.
Choose an occurrence $i_2,i_3,i_1$ such that $i_1<i_2<i_3$, and
$\operatorname{Pos}_{\pi}(i_2)
<
\operatorname{Pos}_{\pi}(i_3)
<
\operatorname{Pos}_{\pi}(i_1)$,
and such that $i_2-i_1$ is as small as possible among all occurrences
of $231$ in $\pi$.
We first prove that $i_2=i_1+1$.
Suppose instead that $i_2-i_1>1$, and set $d=i_1+1$.
Then $i_1<d<i_2<i_3$.
Consider the position of $d$ relative to $i_3$.
If $d$ lies to the left of $i_3$, then
$d,i_3,i_1$
forms an occurrence of $231$. But
$d-i_1=1<i_2-i_1$,
contradicting the minimality of $i_2-i_1$.
If $d$ lies to the right of $i_3$, then
$i_2,i_3,d$
forms an occurrence of $231$. Moreover,
$i_2-d=i_2-i_1-1<i_2-i_1,$
again contradicting the minimality of $i_2-i_1$.
Therefore $i_2=i_1+1$. Writing $i=i_1$ and $j=i_3$, we have an
occurrence $i+1,j,i$ of $231$. 
Hence $c_i\geq c_{i+1}+2$.
Thus the inequalities cannot all hold.
\end{proof}
By Lemma~\ref{lem_231_encoding}, 231-avoiding permutations can be encoded by 
non-decreasing integer sequences $y_1,\ldots,y_n$ with $i\le y_i\le n$. 
It remains to determine how the avoidance of $P_{\ell,x}$ is reflected in this encoding. 
This gives the following characterization.

\begin{lem}\label{lem_231}
Let $\mathcal C_n(a,b)$ be the set of nondecreasing integer sequences
$(y_1,\dots,y_n)$ of length $n$ satisfying $i\leq y_i\leq n$ for every
$i\in[n]$, and such that there is no $i\in[n]$ for which
$i+a\leq y_i\leq n-b$. Then
$s_n(231,P_{\ell,x})=|\mathcal C_n(x-1,\ell-x)|$.\end{lem}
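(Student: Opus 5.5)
The plan is to compose the bijection $\xi$ with an explicit change of variables, and then translate both pattern conditions into conditions on the resulting sequence.

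For $\pi\in S_n$ with $\xi(\pi)=(c_1,\dots,c_n)$, I will set $y_i=c_i+i$. The constraint $0\le c_i\le n-i$ becomes $i\le y_i\le n$. The condition of Lemma~\ref{lem_231_encoding}, namely $c_i\le c_{i+1}+1$, becomes $y_i\le y_{i+1}$. Since $\xi$ is a bijection from $S_n$ onto $\mathcal C_n$ (with inverse $\eta$), the map $\pi\mapsto(y_1,\dots,y_n)$ is a bijection from $\operatorname{Av}_n(231)$ onto the set of nondecreasing sequences with $i\le y_i\le n$. For $n\le 2$ Lemma~\ref{lem_231_encoding} was stated only for $n\ge3$, but the claim is trivial there: every permutation avoids $231$, and every sequence with $i\le y_i\le n$ is automatically nondecreasing for $n\le2$.

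Next I will translate avoidance of $P_{\ell,x}$. As noted in the proof of Theorem~\ref{thm_231_method_1}, $\pi$ contains $P_{\ell,x}$ if and only if some entry $i$ satisfies $Q^\pi_{\mathrm{II}}(i)\ge x-1$ and $Q^\pi_{\mathrm I}(i)\ge \ell-x$. For the first condition, pick $x-1$ larger entries to the left of $i$; they are unconstrained among themselves. For the second, pick $\ell-x$ larger entries to the right of $i$. Using the identity $Q_{\mathrm I}(i)+Q_{\mathrm{II}}(i)=n-i$, the second condition becomes $c_i\le n-i-(\ell-x)$. In terms of $y_i$, the two conditions read
$$
i+(x-1)\le y_i\le n-(\ell-x).
$$
Hence $\pi$ avoids $P_{\ell,x}$ exactly when no $i$ satisfies $i+a\le y_i\le n-b$, with $a=x-1$ and $b=\ell-x$.

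Combining the two translations, the bijection sends $\operatorname{Av}_n(231,P_{\ell,x})$ onto $\mathcal C_n(x-1,\ell-x)$, which gives $s_n(231,P_{\ell,x})=|\mathcal C_n(x-1,\ell-x)|$. I expect no real obstacle; the only step needing care is the quadrant characterization of an occurrence of $P_{\ell,x}$, and that was already justified earlier in the paper.
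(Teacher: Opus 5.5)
Your proposal is correct and takes the same route as the paper. You set $y_i=c_i+i$ in the encoding $\xi$, use Lemma~\ref{lem_231_encoding} for monotonicity, and rewrite $Q_{\mathrm{II}}(i)\ge x-1$ and $Q_{\mathrm I}(i)=n-y_i\ge \ell-x$ as $i+x-1\le y_i\le n-\ell+x$; you also spell out the monotonicity translation and the cases $n\le 2$, which the paper leaves implicit.
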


\begin{proof}
For any $i$ in permutation $\pi$, let $y_i=i+c_i$. The number of dots in
the second quadrant of $i$ is $y_i-i$ and the number of dots in
the first quadrant of $i$ is $n-y_i$. If $\pi$ avoids $P_{\ell,x}$, then
there is no $i\in[n]$ such that $y_i-i\ge x-1$ and $n-y_i\ge\ell-x$.
That is $x-1+i\le y_i\le n-\ell+x$. So we construct a bijection between
$\text{Av}_n(231,P_{\ell,x})$ and $\mathcal C_n(x-1,\ell-x)$.
\end{proof}
Next, we use an injection to show that the $(231,P_{\ell,x})$ are pairwise inequivalent 
when $x\ge 2$.
\begin{thm}\label{231_inj}
For any $n\ge\ell$, $s_n(231,P_{\ell,\ell})>s_n(231,P_{\ell,\ell-1})>\cdots>s_n(231,P_{\ell,2})$.
\end{thm}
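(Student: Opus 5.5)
By Lemma~\ref{lem_231}, it suffices to prove that for $2\le x\le \ell-1$ and $n\ge \ell$,
\[
|\mathcal C_n(x-1,\ell-x)|<|\mathcal C_n(x,\ell-x-1)|.
\]
Chaining these inequalities over $x$ then gives the whole statement. I would therefore compare, for fixed $a+b=\ell-1$, the sets with parameters $(a,b)$ and $(a+1,b-1)$, where $a=x-1\ge 1$ and $b=\ell-x\ge 1$. A sequence in $\mathcal C_n(a,b)$ must satisfy, for each $i$, either $y_i\le i+a-1$ (``low'') or $y_i\ge n-b+1$ (``high''). In $\mathcal C_n(a+1,b-1)$ these thresholds become $y_i\le i+a$ and $y_i\ge n-b+2$. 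Since $y$ is nondecreasing, a high index stays high. So every $y\in\mathcal C_n(a,b)$ splits as a low prefix $y_1,\dots,y_{m-1}$ followed by a high suffix $y_m,\dots,y_n$, where $m$ is the first high index; the same holds in the target set.

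The plan is to build an explicit injection $\iota:\mathcal C_n(a,b)\to\mathcal C_n(a+1,b-1)$ that respects this decomposition.
\begin{itemize}
\item On the low prefix, take $\iota$ to be the identity. Low entries only get looser constraints, since $y_i\le i+a-1$ implies $y_i\le i+a$.
\item On the high suffix, the values lie in the window $[n-b+1,n]$ and must be pushed into the smaller window $[n-b+2,n]$. A naive shift $y_i\mapsto\min(y_i+1,n)$ is not injective, because it merges the values $n-1$ and $n$.
\item To avoid this, I would shift the suffix by one value and simultaneously move the split point. The first high entry equal to $n-b+1$ would be absorbed into the low part: it is reinterpreted as a low entry, which is allowed because the low bound increased by one. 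The remaining suffix entries are then recoded so that equal values at the top are tracked by their positions.
\end{itemize}
In practice I would first write both sets as counts indexed by the split $m$:
\[
|\mathcal C_n(a,b)|=\sum_m L_a(m)\,H_b(n-m+1),
\]
where $L_a(m)$ counts low prefixes of length $m-1$ and $H_b(k)$ counts nondecreasing high suffixes of length $k$. These factors are ballot-type numbers and binomial coefficients, respectively. I would then prove termwise domination after a suitable reindexing of $m$. This is the algebraic shadow of the injection: $L_{a+1}(m)\ge L_a(m)$ comes from prefix inclusion, and the loss in $H$ is compensated by the gain in $L$ when $m$ is shifted by one.

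The main obstacle is this compensation step: showing that what is lost by shrinking the high window is exactly repaid by the enlarged low region near the split. I would first try to verify it directly on the lattice-path picture. There $y$ is a monotone path above the diagonal that avoids a staircase strip, and moving from $(a,b)$ to $(a+1,b-1)$ translates the strip one unit to the right. A shift-and-reattach argument at the first step where the path enters the high region should then give the injection. For strictness, I would exhibit an element of $\mathcal C_n(a+1,b-1)$ outside the image whenever $n\ge\ell$. A natural candidate is a sequence with $y_i=i+a$ for some $i$ while staying below $n-b+1$; this entry is low for the target but forbidden in the source. The condition $n\ge \ell$ is what guarantees that such an $i$ with $i+a\le n-b$ exists.
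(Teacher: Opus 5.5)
Your reduction to $|\mathcal C_n(a,b)|<|\mathcal C_n(a+1,b-1)|$ for $a+b=\ell-1$, $a,b\ge1$, matches the paper. The gap is that the injection itself is never constructed. You leave the compensation step open, and neither the recoding of the suffix nor the reindexing for termwise domination is specified. Moreover, the mechanism you sketch starts from a false premise.

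Put $T=n-b+1$. In $\mathcal C_n(a+1,b-1)$ the forbidden window at position $i$ is $[i+a+1,T]$, so every value $>T$ is already admissible there. High entries therefore need not be pushed into a smaller window at all, and the non-injectivity of the naive shift is an artifact of that framing. The only offending entries are those equal to $T$, and your absorption step does not handle them:
\begin{itemize}
\item An entry $y_i=T$ is low in the target only when $T\le i+a$. This fails already at $i=1$, because $n\ge\ell$ gives $T=n-\ell+a+2\ge a+2$.
\item A nondecreasing sequence may have several entries equal to $T$ lying inside the target's forbidden windows, so treating only the first one cannot work.
\item Separately, your suffix factor $H$ is not a plain binomial coefficient, since the constraint $y_i\ge i$ still binds at suffix positions $i>T$.
\end{itemize}

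The missing idea is that one local modification suffices. Define $z_i=\min\{y_i,i+a\}$ if $y_i\le T$, and $z_i=y_i$ if $y_i>T$. This map:
\begin{itemize}
\item keeps $z$ nondecreasing with $z_i\ge i$;
\item places every entry with $y_i\le T$ in the target's low region $z_i\le i+a$ and leaves entries $>T$ untouched;
\item is inverted by sending each $z_i=i+a<T$ back to $T$. There is no collision, because source-low entries satisfy $z_i=y_i\le i+a-1$.
\end{itemize}

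Your strictness argument has a related flaw. The injection is not the inclusion map, so a target sequence containing some $z_i=i+a<T$ need not lie outside the image. Under the map above, any source sequence with $y_1=T$ is sent to one with $z_1=a+1<T$. Non-surjectivity must instead be forced by monotonicity, as with $z^*_i=\max\{i,a+1\}$:
\begin{itemize}
\item Since $z^*_1=a+1<T$ and every source sequence has $y_1\le a$ or $y_1\ge T$, a preimage needs $y_1=T$.
\item Since $z^*_2=a+1<T$, a preimage needs $y_2=a+1$.
\end{itemize}
Together these contradict $y_1\le y_2$.
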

\begin{proof}
By Lemma~\ref{lem_231}, it suffices to prove that, whenever $a+b=\ell-1$, and $1\le a\le \ell-2$,
we have
$|\mathcal C_n(a,b)|
<
|\mathcal C_n(a+1,b-1)|$.

For convenience, set $T=n-b+1$. Then $y=(y_1,\ldots,y_n)\in\mathcal C_n(a,b)$ 
implies that there is no $i\in[n]$ such that $i+a\le y_i\le T-1$.
Hence, for every $i$, $y_i\le i+a-1$ or $y_i\ge T$.
Similarly, if $z=(z_1,\ldots,z_n)\in\mathcal C_n(a+1,b-1)$,
then for every $i$, we have $z_i\le i+a$ or $z_i>T$.

Define $\lambda:\mathcal C_n(a,b)\longrightarrow
\mathcal C_n(a+1,b-1)$
by $\lambda(y)=z=(z_1,\ldots,z_n)$, where
\begin{align*}
z_i=
\begin{cases}
\min\{y_i,i+a\}, & y_i\le T,\\
y_i, & y_i>T.
\end{cases}
\end{align*}
We next show that $\lambda$ is injective. Define $\mu$ by
\begin{align*}
\mu(z)_i=
\begin{cases}
T, & z_i=i+a<T,\\
z_i, & \text{otherwise}.
\end{cases}
\end{align*}
It is easy to check that $\mu\circ\lambda=\operatorname{id}$.
It remains to show that $\lambda$ is not surjective. Define
$z^*=(z_1^*,\ldots,z_n^*)$, where
$z_i^*=\max\{i,a+1\}$. Since $a\geq1$, we have
\begin{align*}
z^*=(\underbrace{a+1,\ldots,a+1}_{a+1\text{ terms}},a+2,a+3,\ldots,n).
\end{align*}
Hence, $z^*\in\mathcal C_n(a+1,b-1)$.

Suppose, to the contrary, that there exists
$y^*=(y_1^*,\ldots,y_n^*)\in\mathcal C_n(a,b)$ such that $\lambda(y^*)=z^*$.
Now $\lambda(y^*_1)=z_1^*=a+1<T$.
Since $T=n-b+1=n-\ell+a+2$, and $n\ge\ell$, we have $T\ge a+2>a+1$.
Since \(y^*\in\mathcal C_n(a,b)\), the first coordinate satisfies
$y_1^*\le a$ or $y_1^*\ge T$.
The equality \(\lambda(y^*_1)=a+1\) therefore forces $y_1^*=T$.
On the other hand, since $a\ge1$, we have $z_2^*=a+1$.
Because $a+1<T$, the equality $\lambda(y^*_2)=a+1$
forces $y_2^*=a+1$.
Thus $y_1^*=T>a+1=y_2^*$,
contradicting the fact that $y^*$ is non-decreasing.
Therefore $z^*\notin\operatorname{Im}(\lambda)$, so
$\lambda$ is not surjective. Hence
$|\mathcal C_n(a,b)|
<
|\mathcal C_n(a+1,b-1)|$.
This completes the proof.
\end{proof}

\begin{cor}\label{231_neq}
For $2\le x\le\ell$, $(231,P_{\ell,x})$ are pairwise non-Wilf-equivalent.
\end{cor}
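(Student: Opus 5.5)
The corollary follows directly from Theorem~\ref{231_inj}. Two pairs are Wilf-equivalent only if their avoidance counts agree for every $n\geq 0$. So to separate $(231,P_{\ell,x})$ from $(231,P_{\ell,x'})$ for $2\le x<x'\le\ell$, it is enough to find one length $n$ where the counts differ.

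We take $n=\ell$; any $n\ge\ell$ would serve equally well. Theorem~\ref{231_inj} gives the strictly decreasing chain
\[
s_\ell(231,P_{\ell,\ell})>s_\ell(231,P_{\ell,\ell-1})>\cdots>s_\ell(231,P_{\ell,2}).
\]
By transitivity, $s_\ell(231,P_{\ell,x'})>s_\ell(231,P_{\ell,x})$ whenever $x'>x$. The two sequences therefore differ at $n=\ell$, and so the pairs are not Wilf-equivalent. Since $x<x'$ was an arbitrary choice in $\{2,\dots,\ell\}$, the pairs $(231,P_{\ell,x})$ are pairwise non-Wilf-equivalent.

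No step here is a real obstacle; all the work was already done in Theorem~\ref{231_inj}, through the injection $\lambda$ and the explicit sequence $z^*$ outside its image. The one thing to confirm is that the theorem's hypothesis $n\ge\ell$ is satisfied at the chosen length, which it is for $n=\ell$. This keeps the argument valid even for $\ell=3$, where it compares only $x=2$ and $x=3$.
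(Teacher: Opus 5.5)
Your proposal is correct and matches the paper, which states this corollary as an immediate consequence of Theorem~\ref{231_inj} without a separate proof. As you argue, the strict chain at a single length $n\ge\ell$ (for instance $n=\ell$) already separates the avoidance sequences of any two pairs $(231,P_{\ell,x})$ with $2\le x\le\ell$.
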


It is worth noting that, when $x=1$, the injective map $\lambda$ in the proof of Theorem~\ref{231_inj} is actually
a bijection, thus providing an alternative proof of
Theorem~\ref{thm_231_method_1}. Next, by evaluating the generating functions
at suitable specializations, we determine the equivalence relations among
$(213,P_{\ell,x})$.

\begin{lem}\label{lem:recurrence-213}
Let $a_{\ell,x}(n)=s_n(213,P_{\ell,x})$.
For $n\geq\ell$, we have
\begin{align}
a_{\ell,x}(n)
={}
\sum_{r=0}^{x-2}
C_r\,a_{\ell-r,x-r}(n-r-1)
+
\sum_{r=0}^{\ell-x-1}
C_r\,a_{\ell,x}(n-r-1),
\label{eq:recurrence-213}
\end{align}
where $C_r$ denotes the $r$th Catalan number, and an empty sum is
understood to be zero. Moreover, for $0\leq n\leq\ell-1$, we have
$a_{\ell,x}(n)=C_n$.
\end{lem}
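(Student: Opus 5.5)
We will prove \eqref{eq:recurrence-213} by decomposing a $213$-avoiding permutation around its entry $1$. The initial values are immediate: an occurrence of $P_{\ell,x}$ needs $\ell$ entries, so for $n\le\ell-1$ we have $\operatorname{Av}_n(213,P_{\ell,x})=\operatorname{Av}_n(213)$, which has $C_n$ elements.

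\textbf{Structural decomposition.} Let $\pi\in\operatorname{Av}_n(213)$ and write $\pi=\alpha\,1\,\beta$ with $|\alpha|=p-1$, where $p=\operatorname{Pos}_\pi(1)$. If some $b\in\alpha$ were smaller than some $c\in\beta$, then $b,1,c$ would form a $213$. Hence every entry of $\alpha$ exceeds every entry of $\beta$. Conversely, if $\alpha,\beta$ are $213$-avoiding, $\alpha>\beta$ entrywise, and $1$ sits between them, then $\pi$ avoids $213$. Indeed, an occurrence $bac$ cannot use $1$ in the roles of $b$ or $c$, and using $1$ as $a$ is ruled out by $\alpha>\beta$; an occurrence meeting both blocks would need its largest letter $c$ in $\beta$ with a smaller letter in $\alpha$, which is impossible. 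So $213$-avoiders with $|\alpha|=r$ correspond bijectively to pairs $(\alpha,\beta)$ of $213$-avoiders of lengths $r$ and $n-r-1$.

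\textbf{Splitting into two disjoint cases.} The entry $1$ is the $x$-th entry of an occurrence of $P_{\ell,x}$ exactly when $p-1\ge x-1$ and $n-p\ge \ell-x$. So in $\operatorname{Av}_n(213,P_{\ell,x})$ either (A) $|\alpha|=r\le x-2$, or (B) $|\beta|=r\le \ell-x-1$. These cases are disjoint when $n\ge\ell$: both holding would give $n-1\le \ell-3$. Next we locate the remaining possible occurrences, namely those whose minimum $m$ is not $1$.

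If $m\in\alpha$, every later entry of the occurrence must exceed $m$. Entries of $\beta$ and the entry $1$ are smaller than $m$, so the whole occurrence lies inside $\alpha$. If $m\in\beta$, the occurrence cannot contain $1$; its entries before $m$ may come from $\alpha$ (all larger than $m$) or from $\beta$, while its entries after $m$ lie in $\beta$.

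\textbf{Counting each case.} In case (A), $|\alpha|=r<\ell$, so no occurrence lies inside $\alpha$, and $\alpha$ is an arbitrary $213$-avoider, giving $C_r$ choices. An occurrence with minimum in $\beta$ can borrow up to $r$ prefix entries from $\alpha$, and borrowing all $r$ is allowed since $r\le x-1$. Therefore such an occurrence exists if and only if $\beta$ contains $P_{\ell-r,x-r}$ (note $x-r\ge2\ge1$). This yields the term $C_r\,a_{\ell-r,x-r}(n-r-1)$.

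In case (B), an occurrence with minimum in $\beta$ needs $\ell-x>r$ entries after it within $\beta$, which is impossible. So $\beta$ is arbitrary, giving $C_r$ choices, and the only constraint is that $\alpha$ avoids $P_{\ell,x}$. This yields $C_r\,a_{\ell,x}(n-r-1)$.

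Summing over $r$ in both cases gives \eqref{eq:recurrence-213}. The step that needs the most care is the case (A) claim that borrowing from $\alpha$ shifts the pattern exactly to $P_{\ell-r,x-r}$. Sufficiency holds because all of $\alpha$ precedes and exceeds $\beta$. Necessity holds because fewer borrowed entries only strengthen the requirement placed on $\beta$.
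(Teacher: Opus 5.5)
Your proof is correct and follows essentially the same route as the paper. Both write $\pi=\alpha\,1\,\beta$ with every entry of $\alpha$ exceeding every entry of $\beta$, and both split according to whether $1$ lies too far left or too far right to be the distinguished entry. In the first case the constraint passes to $\beta$ as $P_{\ell-r,x-r}$, and in the second it stays on $\alpha$ as $P_{\ell,x}$. Your explicit checks that the two cases are disjoint for $n\ge\ell$, and of the necessity direction of the reduction to $P_{\ell-r,x-r}$, are details the paper leaves implicit.
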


\begin{proof}
Let $\pi\in\operatorname{Av}_n(213,P_{\ell,x})$
and suppose that $\operatorname{Pos}_{\pi}(1)=i$.
Since $\pi$ avoids $213$, we can write $\pi=A1B$, where every entry of
$A$ is larger than every entry of $B$, both $A$ and $B$ avoid $213$, and
$|A|=i-1$ and $|B|=n-i$.
In order to avoid $P_{\ell,x}$, we must have either
$1\leq i\leq x-1$ or $n-\ell+x+1\leq i\leq n$.
We enumerate the permutations in these two cases separately.

If $1\leq i\leq x-1$, we have $|A|=i-1\leq x-2<\ell$,
so no entry of $A$ can be the distinguished entry of an occurrence
of $P_{\ell,x}$. Thus $A$ is only required to avoid $213$, and there
are $C_{i-1}$ possible choices for $A$.
Every entry of $A$ precedes and is larger than every entry of $B$.
It follows that $\pi$ avoids $P_{\ell,x}$ if and only if $B$ avoids
$P_{\ell-i+1,x-i+1}$.
Therefore, for a fixed $i$ in this range, the number of possible
permutations is $C_{i-1}\,
a_{\ell-i+1,x-i+1}(n-i)$.

If $n-\ell+x+1\leq i\leq n$, we have $|B|=n-i\leq\ell-x-1$.
Thus neither $1$ nor an entry of $B$ can be the distinguished entry
of an occurrence of $P_{\ell,x}$.
Moreover, every entry of $B$, as well as the entry $1$, is smaller
than every entry of $A$. Hence these entries cannot be used together
with an entry of $A$ as the distinguished entry of an occurrence of
$P_{\ell,x}$. Consequently, any occurrence of $P_{\ell,x}$ must lie
entirely in $A$.
It follows that $A$ must avoid both $213$ and $P_{\ell,x}$, whereas
$B$ is only required to avoid $213$. Therefore, for a fixed $i$ in
this range, the number of possible permutations is
$a_{\ell,x}(i-1)\,C_{n-i}$.
By a straightforward simplification, we obtain \eqref{eq:recurrence-213}
\end{proof}

\begin{thm}\label{213_no}
For $1\le x\le\ell$, $(213,P_{\ell,x})$ are not equivalent to each other.
\end{thm}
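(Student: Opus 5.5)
We will show that the sequences $a_{\ell,x}(n)=s_n(213,P_{\ell,x})$, $1\le x\le\ell$, are pairwise distinct. The tool is Lemma~\ref{lem:recurrence-213}: compute the first few terms beyond the Catalan range and compare them. For $0\le n\le\ell-1$ all sequences agree with $C_n$, so the first possible difference is at $n=\ell$. Applying \eqref{eq:recurrence-213} with $n=\ell$, the terms $a_{\ell-r,x-r}(\ell-r-1)$ and $a_{\ell,x}(\ell-r-1)$ are all Catalan numbers. Hence $a_{\ell,x}(\ell)=C_\ell-(\text{number of occurrences})$. Directly, $a_{\ell,x}(\ell)=C_\ell-\#\{\pi\in\operatorname{Av}_\ell(213):\pi_x=1\}$. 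Since $\pi=A1B$ with $A>B$, this count equals $C_{x-1}C_{\ell-x}$. Thus
\begin{align*}
a_{\ell,x}(\ell)=C_\ell-C_{x-1}C_{\ell-x}.
\end{align*}

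This separates $x$ from $y$ unless $C_{x-1}C_{\ell-x}=C_{y-1}C_{\ell-y}$. The product is symmetric under $x\mapsto \ell+1-x$. It is also strictly decreasing in $x$ toward the middle, by log-convexity of the Catalan numbers: $C_{m+1}/C_m=\frac{2(2m+1)}{m+2}$ is strictly increasing in $m$. So at $n=\ell$ the only coincidences are the mirror pairs $x$ and $\ell+1-x$ with $x\neq \ell+1-x$.

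The real work is distinguishing each mirror pair $(213,P_{\ell,x})$ and $(213,P_{\ell,\ell+1-x})$, with $x<\ell+1-x$. For this we compute $a_{\ell,x}(\ell+1)$ and, if needed, $a_{\ell,x}(\ell+2)$ from \eqref{eq:recurrence-213}. At $n=\ell+1$, the recurrence involves $a_{\ell,x}(\ell)$, already computed, and $a_{\ell-r,x-r}(\ell-r)$, which by the formula above equals $C_{\ell-r}-C_{x-r-1}C_{\ell-x}$. This yields the closed form
\begin{align*}
a_{\ell,x}(\ell+1)=C_{\ell+1}-\sum_{r=0}^{x-2}C_rC_{x-r-2}C_{\ell-x}-\sum_{r=0}^{\ell-x-1}C_rC_{x-1}C_{\ell-x-r-1}-(\text{correction}).
\end{align*}
The correction should simplify via $\sum_r C_rC_{m-r}=C_{m+1}$ into an expression such as $C_{\ell+1}-c\,C_{x-1}C_{\ell-x}\cdot(\cdots)-C_xC_{\ell-x}-C_{x-1}C_{\ell-x+1}$. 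The last two terms are again symmetric, so the asymmetry must come from the remaining contributions. Equivalently, and more transparently, we count directly the $213$-avoiders of length $\ell+1$ containing $P_{\ell,x}$ by the position $i$ of $1$, with $x\le i\le \ell+1-\ell+x=x+1$, together with occurrences inside $A$ when $1$ lies at the end. This gives the difference $a_{\ell,x}(\ell+1)-a_{\ell,\ell+1-x}(\ell+1)$ as an explicit combination of products of Catalan numbers.

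The main obstacle is showing that this difference is never zero, and this is where I expect number theory to be needed, as the abstract indicates. The plan is as follows. Write the difference, after cancelling common factors such as $C_{x-1}C_{\ell-x}$, as something like $C_{x}C_{\ell-x-1}\cdot(\ldots)$ minus its mirror. Then pick a prime $p$ in $(\ell+1-x,\,2(\ell+1-x))$ or similar, using Bertrand's postulate. Such a prime divides one Catalan factor on one side but not the other, so the two sides cannot be equal. This argument should handle all but finitely many configurations, or reduce the remaining cases to a single Diophantine equation in $\ell$ and $x$ (for instance, for $x$ very small relative to $\ell$). If $n=\ell+1$ is insufficient, we repeat at $n=\ell+2$. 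We then rule out the residual equation for large $\ell$ (e.g. $\ell\ge 3274$) by the prime-divisor argument, and settle the finitely many small $\ell$ by direct computation of the recurrence.
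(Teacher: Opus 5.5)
Your first step is correct and is the same as the paper's. The value $a_{\ell,x}(\ell)=C_\ell-C_{x-1}C_{\ell-x}$ is symmetric under $x\mapsto\ell+1-x$ and strictly monotone on each half by log-convexity, so only the mirror pairs remain. But distinguishing the mirror pairs is the real content of the theorem, and there your proposal is not a proof.

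\begin{itemize}
\item Your displayed formula for $a_{\ell,x}(\ell+1)$ ends in an undetermined placeholder ``(correction)''.
\item It also has an index error. The recurrence gives $a_{\ell-r,x-r}(\ell-r)=C_{\ell-r}-C_{x-r-1}C_{\ell-x}$, so the first sum should contain $C_{x-r-1}$, not $C_{x-r-2}$. With your indexing, both of your sums equal $C_{x-1}C_{\ell-x}$, and the asymmetry disappears.
\item Your ``direct count'' omits the positions $i\le x-1$ of $1$. There the block $B$ must contain $P_{\ell-i+1,x-i+1}$.
\item The closing Bertrand-postulate/Diophantine plan is not attached to any explicit identity, so it cannot be checked. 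In the paper, prime-divisor arguments are used only for the cross-family comparison with $(321,P_{\ell,y})$.
\end{itemize}

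None of that machinery is needed: finish the count at $n=\ell+1$ by the position $i$ of $1$ in $\pi=A1B$.
\begin{itemize}
\item Positions $i\in\{x,x+1\}$ contribute $C_{x-1}C_{\ell+1-x}+C_xC_{\ell-x}$, which is symmetric.
\item Positions $i\le x-1$ contribute $\sum_{i=1}^{x-1}C_{i-1}C_{x-i}C_{\ell-x}=(C_x-C_{x-1})C_{\ell-x}$.
\item If $x\le\ell-1$, position $i=\ell+1$ with $A$ containing $P_{\ell,x}$ contributes $C_{x-1}C_{\ell-x}$.
\end{itemize}
Hence $C_{\ell+1}-a_{\ell,x}(\ell+1)=C_{x-1}C_{\ell+1-x}+2C_xC_{\ell-x}$ for $1\le x\le\ell-1$, while for $x=\ell$ the count is $2C_\ell$.

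For $2\le x<\frac{\ell+1}{2}$ this gives $a_{\ell,x}(\ell+1)-a_{\ell,\ell+1-x}(\ell+1)=C_{x-1}C_{\ell+1-x}-C_xC_{\ell-x}>0$, by the same log-convexity you already used. For the endpoint pair, $a_{\ell,1}(\ell+1)-a_{\ell,\ell}(\ell+1)=C_\ell-2C_{\ell-1}=\frac{2(\ell-2)}{\ell+1}C_{\ell-1}>0$ since $\ell\ge3$. This is exactly the paper's argument; neither $n=\ell+2$ nor any primes are required.
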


\begin{proof}
    By Lemma~\ref{lem:recurrence-213}, we have
    \begin{align}\label{eq:b-formula}
        a_{\ell,x}(\ell)=\sum_{i=0}^{x-2}C_iC_{\ell-i-1}+\sum_{j=0}^{\ell-x-1}C_jC_{\ell-j-1}=C_\ell-C_{x-1}C_{\ell-x}.
    \end{align}
Let $b(x)=a_{\ell,x}(\ell)$. The sequence $b(x)$ is symmetric, and
\begin{align*}
b(x+1)-b(x)
=
C_{x-1}C_{\ell-x}-C_{\ell-1-x}C_x.
\end{align*}
By the log-convexity of the Catalan numbers
\cite{LiuWang2007}, we have
$b(x+1)>b(x)$ whenever $x<\ell/2$. Hence, $b(x)$ is unimodal.

It remains only to prove that $(213,P_{\ell,x})$ and
$(213,P_{\ell,\ell-x+1})$ are not Wilf-equivalent.
For this purpose, define $c(x)=a_{\ell,x}(\ell+1)$.
For $1\le x\le \ell-1$, \eqref{eq:recurrence-213} gives
\begin{align*}
c(x)
={}&
\sum_{r=0}^{x-2}
C_r\,a_{\ell-r,x-r}(\ell-r)
+
\sum_{r=0}^{\ell-x-1}
C_r\,a_{\ell,x}(\ell-r).
\end{align*}
By \eqref{eq:b-formula}, $a_{\ell-r,x-r}(\ell-r)
=
C_{\ell-r}
-
C_{x-r-1}C_{\ell-x}$.
In the second sum, the term corresponding to $r=0$ is
$a_{\ell,x}(\ell)
=
C_\ell-C_{x-1}C_{\ell-x}$,
whereas for $r\ge1$ we have
$a_{\ell,x}(\ell-r)=C_{\ell-r}$.
Therefore
\begin{align*}
c(x)
={}
\sum_{r=0}^{x-2}
C_r\bigl(C_{\ell-r}-C_{x-r-1}C_{\ell-x}\bigr)
+C_\ell-C_{x-1}C_{\ell-x}
+\sum_{r=1}^{\ell-x-1}C_rC_{\ell-r}.
\end{align*}
Since $\sum_{r=0}^{x-2}C_rC_{x-r-1}
=
C_x-C_{x-1}$,
this simplifies to
\begin{equation*}\label{eq:c-formula}
c(x)
=
C_\ell
+\sum_{r=0}^{x-2}C_rC_{\ell-r}
+\sum_{r=1}^{\ell-x-1}C_rC_{\ell-r}
-C_xC_{\ell-x}.
\end{equation*}
Now suppose that $2\le x<\frac{\ell+1}{2}$.
Hence we have
\begin{equation}\label{eq:c-difference}
c(x)-c(\ell+1-x)
=
C_{x-1}C_{\ell+1-x}
-
C_xC_{\ell-x}.
\end{equation}
Because $\ell+1-x>x$ and the ratios $C_m/C_{m-1}$ are strictly
increasing, \eqref{eq:c-difference} is strictly positive.

It remains only to distinguish the endpoint pair $x=1$ and $x=\ell$.
A direct calculation gives
$
 c(1)-c(\ell)
=
C_\ell-2C_{\ell-1}
=
\frac{2(\ell-2)}{\ell+1}C_{\ell-1}>0$,   
since $\ell\ge3$.
Therefore the pairs $(213,P_{\ell,x})$, for $1\le x\le\ell$, are
pairwise non-Wilf-equivalent.
\end{proof}
It remains only to prove that, for $1\leq x\leq\ell$ and $3\leq y\leq\ell$,
the pairs $(213,P_{\ell,x})$ and $(321,P_{\ell,y})$ are not Wilf-equivalent.
By the proof of Theorem~\ref{213_no}, we have
$s_\ell(213,P_{\ell,x})=C_\ell-C_{x-1}C_{\ell-x}$.
Moreover,
$s_\ell(321,P_{\ell,y})
=
C_\ell-\#\{\pi\in\operatorname{Av}_\ell(321):
\operatorname{Pos}_\pi(1)=y\}$.
The subsequence to the left of $1$ must be increasing. So there is a bijection
\begin{align*}
\left\{
\pi\in\operatorname{Av}_\ell(321):
\operatorname{Pos}_\pi(1)=y
\right\}
\longleftrightarrow
\left\{
\tau\in\operatorname{Av}_{\ell-1}(321):
\tau_1<\tau_2<\cdots<\tau_{y-1}
\right\}.
\end{align*}
Indeed, we delete the entry $1$ from $\pi$ and subtract $1$ from every
remaining entry. The resulting permutation
$\tau\in\operatorname{Av}_{\ell-1}(321)$ satisfies
$\tau_1<\tau_2<\cdots<\tau_{y-1}$.
Conversely, given such a permutation $\tau$, add $1$ to every entry
and insert $1$ in position $y$. By Lemma~\ref{321_de},
$$\#\{\pi\in\text{Av}_\ell(321):\text{Pos}_\pi(1)=y\}=\frac y\ell\binom{2\ell-y-1}{\ell-1}.$$
Thus, all possible cross-family Wilf equivalences must arise from integer
solutions to the equation
\begin{equation}\label{eq:catalan-ballot}
C_{x-1}C_{\ell-x}
=
\frac{y}{\ell}
\binom{2\ell-y-1}{\ell-1}.
\end{equation}

We first consider the simple case $x=1$. 
\begin{thm}\label{thm:x1-versus-321}
Let $\ell\ge 3$. For every $3\le y\le\ell$, the pairs
$(213,P_{\ell,1})$ and $(321,P_{\ell,y})$
are not Wilf-equivalent.
\end{thm}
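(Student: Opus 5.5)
It suffices to exhibit a single length at which the two counting sequences differ, and I will use $n=\ell$. By the discussion preceding \eqref{eq:catalan-ballot}, a Wilf equivalence between $(213,P_{\ell,1})$ and $(321,P_{\ell,y})$ would force \eqref{eq:catalan-ballot} with $x=1$. Since $C_0=1$, this equation reads
\begin{equation*}
C_{\ell-1}=f(y),\qquad\text{where } f(y):=\frac{y}{\ell}\binom{2\ell-y-1}{\ell-1}.
\end{equation*}
So the plan is to show that $f(y)<C_{\ell-1}$ for every $3\le y\le\ell$.

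The first step is to evaluate $f$ at $y=2$. We have
\begin{equation*}
f(2)=\frac{2}{\ell}\binom{2\ell-3}{\ell-1}.
\end{equation*}
Using $\binom{2\ell-2}{\ell-1}=\frac{2\ell-2}{\ell-1}\binom{2\ell-3}{\ell-2}=2\binom{2\ell-3}{\ell-1}$, this becomes $f(2)=\frac1\ell\binom{2\ell-2}{\ell-1}=C_{\ell-1}$. This is the familiar coincidence $C(n,n-1)=C(n,n)$ in the Catalan triangle of Lemma~\ref{321_de}. Combinatorially, it says that placing $1$ in the first or second position of a $321$-avoider gives equally many permutations.

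The second step is to show that $f$ is strictly decreasing on $2\le y\le\ell$. For $2\le y\le \ell-1$, the ratio of consecutive terms is
\begin{equation*}
\frac{f(y+1)}{f(y)}=\frac{y+1}{y}\cdot\frac{\ell-y}{2\ell-y-1}.
\end{equation*}
This ratio is less than $1$ if and only if $(y+1)(\ell-y)<y(2\ell-y-1)$. Expanding both sides, that inequality reduces to $\ell<y\ell$, which holds for all $y\ge2$. Hence $f(y)<f(2)=C_{\ell-1}$ for every $3\le y\le\ell$.

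Combining the two steps, $s_\ell(213,P_{\ell,1})=C_\ell-C_{\ell-1}$, while $s_\ell(321,P_{\ell,y})=C_\ell-f(y)>C_\ell-C_{\ell-1}$. The two sequences therefore already differ at $n=\ell$, so the pairs are not Wilf-equivalent. The only delicate point is recognizing the identity $f(2)=C_{\ell-1}$. That identity is exactly why $y=2$ must be excluded from the statement: indeed $(321,P_{\ell,2})$ lies in the same class as $(213,P_{\ell,\ell})$, and hence at $n=\ell$ it agrees with $(213,P_{\ell,1})$ by the symmetry $b(x)=b(\ell+1-x)$. Everything else is a routine ratio computation.
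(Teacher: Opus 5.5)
Your proof is correct, but it takes a different route from the paper. The paper never evaluates the $321$ side explicitly. It uses $b(1)=b(\ell)$ to replace $(213,P_{\ell,1})$ by $(213,P_{\ell,\ell})$ at $n=\ell$. It then passes through the class $\mathcal A_{\ell,2}$, via Theorems~\ref{231,Pl1} and~\ref{thm_231_method_1}, to $(231,P_{\ell,2})$, and transports $(321,P_{\ell,y})$ to $(231,P_{\ell,y})$ by Theorem~\ref{thm:Phi-Psi-preserve-POP}. It concludes with the strict chain $s_\ell(231,P_{\ell,2})<s_\ell(231,P_{\ell,3})<\cdots<s_\ell(231,P_{\ell,\ell})$ coming from the injection $\lambda$ of Theorem~\ref{231_inj}.

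You instead stay at $n=\ell$ and compare the two closed forms already available, $C_\ell-C_{\ell-1}$ and $C_\ell-\frac{y}{\ell}\binom{2\ell-y-1}{\ell-1}$. Your identity $f(2)=C_{\ell-1}$ is right, and so is the ratio test $f(y+1)/f(y)=\frac{(y+1)(\ell-y)}{y(2\ell-y-1)}<1$, which is equivalent to $\ell(y-1)>0$.

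Your route is self-contained and elementary. It needs only Lemma~\ref{321_de} and \eqref{eq:b-formula}, and none of $\varphi$, $\Phi$, $\Psi$ or $\lambda$. It also matches the paper's own reduction, since it shows directly that \eqref{eq:catalan-ballot} has no solution with $x=1$ and $y\ge3$. As a by-product, it yields the chain $s_\ell(321,P_{\ell,3})<\cdots<s_\ell(321,P_{\ell,\ell})$ without going through the $231$ family. The paper's route requires no new computation, because it recycles bijective and injective results it needs elsewhere in the classification anyway.

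One small correction to your closing remark. The case $y=2$ is not excluded because the conclusion fails there: $(213,P_{\ell,1})$ and $(321,P_{\ell,2})$ are still inequivalent, as $c(1)\neq c(\ell)$ in Theorem~\ref{213_no}, together with the class $\mathcal A_{\ell,2}$, shows. It is excluded only because a comparison at $n=\ell$ cannot separate them.
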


\begin{proof}
By Theorem~\ref{231,Pl1} and Theorem~\ref{thm_231_method_1}, $(213,P_{\ell,\ell})$ and $(231,P_{\ell,2})$ are Wilf-equivalent. By Theorem~\ref{thm:Phi-Psi-preserve-POP}, $(321,P_{\ell,y})$ is Wilf-equivalent to $(231,P_{\ell,y})$. And $s_\ell(213,P_{\ell,\ell})=s_\ell(213,P_{\ell,1})$ by the proof of Theorem~\ref{213_no}. Then by Theorem~\ref{231_inj}, we have
\begin{equation*}
s_\ell(213,P_{\ell,1})<s_\ell(321,P_{\ell,3})<\cdots<s_\ell(321,P_{\ell,\ell}).
\end{equation*}
Therefore $(213,P_{\ell,1})$ and $(321,P_{\ell,y})$ are not Wilf-equivalent.
\end{proof}
Having dealt with the case $x=1$, 
we reindex by replacing the original $x$ with $x+1$, 
so that $C_{x-1}C_{\ell-x}$ becomes $C_xC_{\ell-x-1}$, where $1\le x\le\ell-1$.
It remains to consider the positive integer solutions of equation 
\begin{equation}\label{move}
C_{x}C_{\ell-x-1}
=
\frac{y}{\ell}
\binom{2\ell-y-1}{\ell-1},
\end{equation}
in the range $1\le x\le \ell-1$ and $3\le y\le \ell$.
We first analyze some properties of equation \eqref{move}.

Set $a=\min\{x,\ell-1-x\}$, then \eqref{move} 
implies
\begin{align}\label{Aim}
   C_aC_{\ell-1-a}=\dfrac y\ell\binom{2\ell-y-1}{\ell-1}. 
\end{align}
Here $a\le\frac{\ell-1}2$. 
Let
\begin{align*}
T_{\ell,y}=\dfrac y\ell\binom{2\ell-y-1}{\ell-1},\quad F_\ell(a,y)=\dfrac{C_aC_{\ell-1-a}}{T_{\ell,y}}.    
\end{align*}
Thus a solution is exactly the assertion $F_\ell(a,y)=1$.
\begin{lem}\label{F_decresing}
Fix $a\ge0$ and $y\ge3$. For all admissible $\ell$, the sequence $F_\ell(a,y)$
is strictly decreasing in $\ell$, and\[
\lim_{\ell\to+\infty}F_\ell(a,y)=D(a,y)=\dfrac{C_a2^{y-2a-1}}{y}.
\]
Consequently, a solution of \eqref{Aim} must satisfy $D(a,y)<1$.
\end{lem}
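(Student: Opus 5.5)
The plan is to compare consecutive terms directly: compute the ratio $F_{\ell+1}(a,y)/F_\ell(a,y)$ in closed form, show it is strictly less than $1$ on the admissible range, and then identify the limit by Stirling-type asymptotics. Here $a$ and $y$ are fixed, and ``admissible'' means $\ell\ge y$ and $\ell\ge 2a+1$. Both constraints are preserved when $\ell$ is replaced by $\ell+1$, so consecutive admissible values of $\ell$ can be compared.

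\textbf{Step 1: the ratio in closed form.} Using $C_{m+1}/C_m=\frac{2(2m+1)}{m+2}$ with $m=\ell-1-a$, we get
\begin{align*}
\frac{C_{\ell-a}}{C_{\ell-1-a}}=\frac{2(2\ell-2a-1)}{\ell-a+1}.
\end{align*}
From the factorial expression of the binomial coefficient,
\begin{align*}
\frac{T_{\ell+1,y}}{T_{\ell,y}}=\frac{(2\ell-y+1)(2\ell-y)}{(\ell+1)(\ell-y+1)}.
\end{align*}
Hence
\begin{align*}
\frac{F_{\ell+1}(a,y)}{F_\ell(a,y)}=\frac{2(2\ell-2a-1)(\ell+1)(\ell-y+1)}{(\ell-a+1)(2\ell-y+1)(2\ell-y)}.
\end{align*}

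\textbf{Step 2: the ratio is less than $1$.} Put $m=\ell-a+1$, so that $2(2\ell-2a-1)=4m-6$. Clearing denominators, the claim becomes
\begin{align*}
G(m):=m\bigl(y^2+3y-6\ell-4\bigr)+6(\ell+1)(\ell-y+1)>0 .
\end{align*}
Here I used $(2\ell-y)(2\ell-y+1)-4(\ell+1)(\ell-y+1)=y^2+3y-6\ell-4$. Since $0\le a\le(\ell-1)/2$, the variable $m$ ranges over $[(\ell+3)/2,\ \ell+1]$. Because $G$ is linear in $m$, it suffices to check the two endpoints.
\begin{itemize}
\item At $m=\ell+1$ the expression collapses to $(\ell+1)(y-1)(y-2)>0$, using $y\ge3$.
\item At $m=(\ell+3)/2$, twice the expression is the convex quadratic in $y$
\begin{align*}
(\ell+3)(y^2+3y-6\ell-4)+12(\ell+1)(\ell-y+1).
\end{align*}
Its vertex lies at $y=\bigl(\tfrac{12(\ell+1)}{\ell+3}-3\bigr)/2<4.5$. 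So on $3\le y\le\ell$ it suffices to check the integer values $y=3$ and $y=4$, where it equals $6\ell^2-16\ell+18$ and $6\ell^2-18\ell+36$ respectively. Both are positive for all $\ell$, and for $y>4.5$ the quadratic is increasing.
\end{itemize}
This gives strict monotonicity of $F_\ell(a,y)$ in $\ell$.

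\textbf{Step 3: the limit.} Use $C_m\sim 4^m/(\sqrt{\pi}\,m^{3/2})$ and $\binom{2n}{n}\sim 4^n/\sqrt{\pi n}$. These give
\begin{align*}
\binom{2\ell-y-1}{\ell-1}\sim \frac{2^{2\ell-y-1}}{\sqrt{\pi\ell}},\qquad C_{\ell-1-a}\sim\frac{4^{\ell-1-a}}{\sqrt{\pi}\,\ell^{3/2}}.
\end{align*}
The first asymptotic needs a short justification: shift the binomial coefficient by $y-1$ steps, each step contributing a factor tending to $1/2$. Combining,
\begin{align*}
F_\ell(a,y)\;\to\;\frac{C_a\,4^{-1-a}\,2^{y+1}}{y}=\frac{C_a\,2^{y-2a-1}}{y}=D(a,y).
\end{align*}
Since the sequence strictly decreases to $D(a,y)$, every admissible $\ell$ satisfies $F_\ell(a,y)>D(a,y)$. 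Therefore $F_\ell(a,y)=1$ forces $D(a,y)<1$.

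The main obstacle I expect is Step 2: the crude bound $\frac{2(2\ell-2a-1)}{\ell-a+1}<4$ is not sufficient, so the dependence on $a$ through $m$ must be kept. The linear-in-$m$ endpoint argument is the device I would use to handle this.
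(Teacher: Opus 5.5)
Your proof is correct and follows essentially the same route as the paper. Your $G(m)$ with $m=\ell-a+1$ is exactly the paper's numerator $N_{\ell,a,y}=(\ell+1)(y-1)(y-2)+a(6\ell-y^2-3y+4)$. Your endpoint check in $m$ is the paper's case split on the sign of $B=6\ell-y^2-3y+4$ (using $a\le(\ell-1)/2$), except that you handle the remaining quadratic via its vertex in $y$ rather than the substitution $\ell=y+t$, and you obtain the limit by Stirling-type asymptotics rather than the paper's factorial quotient.

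One phrase should be tightened. For integer $y\ge5$, say that $y$ lies farther from the vertex than $4$ does, so the value exceeds the one at $y=4$. Saying only that the quadratic increases beyond $4.5$ does not by itself give this.
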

\begin{proof}
By definition, we obtain
\begin{align*}
\dfrac{F_{\ell+1}(a,y)}{F_\ell(a,y)}-1=-\dfrac{6\ell a+\ell y^2-3\ell y+2\ell-ay^2-3ay+4a+y^2-3y+2}{(\ell-a+1)(2\ell-y)(2\ell-y+1)}.    
\end{align*}
Let $N_{\ell,a,y}$ denote the numerator of the right-hand side. We have
\begin{align*}
    N_{\ell,a,y}
=
(\ell+1)(y-1)(y-2)
+
a(6\ell-y^2-3y+4).
\end{align*}
It remains to show that $N_{\ell,a,y}>0$. Set
$B=6\ell-y^2-3y+4$.
If $B\ge0$, the conclusion is immediate.
Suppose now that $B<0$. Since $0\le a\le\frac{\ell-1}{2}$, we have $aB\ge\frac{\ell-1}{2}B$.
For $t\ge0$, write $\ell=y+t,$
by $t\ge0$ and $y\ge3$,
\begin{align*}
N_{\ell,a,y}
&\ge
(\ell+1)(y-1)(y-2)
+
\frac{\ell-1}{2}
(6\ell-y^2-3y+4)\ge
\frac{
6t^2+(y^2+3y+2)t+y^3-y
}{2}>0.
\end{align*}
Thus $N_{\ell,a,y}>0$ in all cases.
Therefore
$F_{\ell+1}(a,y)<F_\ell(a,y),$
so the sequence $F_\ell(a,y)$ is strictly decreasing.

We next calculate its limit. 
\begin{align*}
F_\ell(a,y)
=
\frac{C_aC_{\ell-1-a}}
{\dfrac{y}{\ell}
 \binom{2\ell-y-1}{\ell-1}}=
\frac{C_a}{y}\,
\frac{\ell}{\ell-a}\,
\frac{
\binom{2\ell-2a-2}{\ell-a-1}
}{
\binom{2\ell-y-1}{\ell-1}
}.
\end{align*}
The quotient of binomial coefficients can be written as
\begin{align*}
\frac{
\binom{2\ell-2a-2}{\ell-a-1}
}{
\binom{2\ell-y-1}{\ell-1}
}
={}&
\frac{(2\ell-2a-2)!}{(2\ell-y-1)!}
\frac{(\ell-1)!}{(\ell-a-1)!}
\frac{(\ell-y)!}{(\ell-a-1)!}.
\end{align*}
Since $a$ and $y$ are fixed, as $\ell\to\infty$ we have
\begin{align*}
\lim_{\ell\to\infty}F_\ell(a,y)
=
\frac{C_a}{y}\,2^{\,y-2a-1}.
\end{align*}
Hence the existence of an admissible integer solution necessarily
implies $D(a,y)<1$.
\end{proof}

Next, we use a Python program, given in Appendix~\ref{code:equation}, to verify the cases for
$4\leq\ell\leq3{,}273$. The computation uses Lemma~\ref{F_decresing}
to restrict the possible values that need to be checked. It shows that,
for $4\leq\ell\leq3{,}273$, $2\leq x\leq\ell$, and
$3\leq y\leq\ell$, the only integer solutions of the equation (\ref{eq:catalan-ballot}) are $(\ell,x,y)\in\{(5,3,4), (6,2,4),(6,5,4)\}$.

\begin{prop}\label{lem:exceptional-triples}
None of the three exceptional triples $(\ell,x,y)\in\{(5,3,4),(6,2,4),(6,5,4)\}$
gives a Wilf equivalence.
\end{prop}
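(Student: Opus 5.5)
The three triples agree at $n=\ell$ by construction, so the plan is to find a larger length where the counts differ. For each triple I will compare $s_n(213,P_{\ell,x})$ with $s_n(321,P_{\ell,y})$ at $n=\ell+1$, and at $n=\ell+2$ if needed.

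\textbf{The $213$ side.} We already have the closed form $c(x)=a_{\ell,x}(\ell+1)$ derived in the proof of Theorem~\ref{213_no}. In the original indexing it reads
\[
c(x)=C_\ell+\sum_{r=0}^{x-2}C_rC_{\ell-r}+\sum_{r=1}^{\ell-x-1}C_rC_{\ell-r}-C_xC_{\ell-x}.
\]
That formula covered $1\le x\le\ell-1$, so it applies directly to $(5,3)$ and $(6,2)$. The case $(6,5)$ satisfies $x=\ell-1$ and is also covered. If needed, the recurrence \eqref{eq:recurrence-213} of Lemma~\ref{lem:recurrence-213} gives further values directly. This yields explicit numbers, e.g.\ $s_6(213,P_{5,3})$, $s_7(213,P_{6,2})$ and $s_7(213,P_{6,5})$.

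\textbf{The $321$ side.} Since $y=4\ge2$, Theorem~\ref{thm:Phi-Psi-preserve-POP} gives $s_n(321,P_{\ell,4})=s_n(231,P_{\ell,4})$. By Lemma~\ref{lem_231} this equals $|\mathcal C_n(3,\ell-4)|$. These are the nondecreasing sequences $(y_1,\dots,y_n)$ with $i\le y_i\le n$ that avoid the forbidden window $i+3\le y_i\le n-\ell+4$.

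For $n=\ell+1$ the window is $i+3\le y_i\le 5$, which is nonempty only for $i\le 2$. The count is therefore $C_{n}$ minus the number of Catalan-type sequences with $y_1\in\{4,5\}$ or $y_2=5$. These can be counted by hand as ballot numbers, using the Catalan triangle of Lemma~\ref{321_de}.

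Alternatively, the argument preceding \eqref{eq:catalan-ballot} can be extended. A $321$-avoider contains $P_{\ell,4}$ iff some entry has at least $3$ larger entries to its left and at least $\ell-4$ larger entries to its right. For $n=\ell+1$ only the entries $1$ and $2$ can serve, and the count is obtained by inclusion–exclusion using Lemma~\ref{321_de}.

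\textbf{Conclusion and main obstacle.} Having both numbers for $n=\ell+1$ (or $\ell+2$), I expect them to differ, which disproves the equivalence in each case. The main risk is bookkeeping, so I will double-check the values by a small enumeration of $\operatorname{Av}_n$ for $n\le 8$, which is feasible. If some triple happens to agree at $n=\ell+1$ as well, I will move to $n=\ell+2$. There the same encoding still works, with a larger but still finite window.
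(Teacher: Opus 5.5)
Your plan is the paper's proof. It compares the counts at $n=\ell+1$, using the closed form for $c(x)$ on the $213$ side and $s_{\ell+1}(321,P_{\ell,4})=|\mathcal C_{\ell+1}(3,\ell-4)|$ (via Theorem~\ref{thm:Phi-Psi-preserve-POP} and Lemma~\ref{lem_231}) on the $321$ side. What is still missing is the actual computation: the proposition is this finite check, so "I expect them to differ" does not yet prove it. The arithmetic does separate all three triples at $n=\ell+1$, so no fallback to $\ell+2$ is needed: $s_6(213,P_{5,3})=102$ versus $|\mathcal C_6(3,1)|=101$, and $s_7(213,P_{6,2})=331$ and $s_7(213,P_{6,5})=317$ versus $|\mathcal C_7(3,2)|=319$.
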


\begin{proof}
We distinguish the corresponding pairs by comparing their avoidance
numbers at $n=\ell+1$.
For the pairs involving $213$, the formula established previously gives
\begin{align}
s_{\ell+1}(213,P_{\ell,x})
={}&
C_\ell
+\sum_{r=0}^{x-2}C_rC_{\ell-r}
+\sum_{r=1}^{\ell-x-1}C_rC_{\ell-r}
-C_xC_{\ell-x}.
\label{eq:213-l-plus-one}
\end{align}
Substituting $(\ell,x)=(5,3)$ into
\eqref{eq:213-l-plus-one}, we obtain
$s_6(213,P_{5,3})=102$.
Similarly,
$s_7(213,P_{6,2})=331$ and 
$s_7(213,P_{6,5})=317$.

It remains to calculate the corresponding avoidance numbers for
$(321,P_{\ell,4})$. By the Wilf equivalence established above, for
$2\leq j\leq\ell$ the pairs
$(321,P_{\ell,j})$ and $(231,P_{\ell,j})$ are Wilf-equivalent and $s_n(231,P_{\ell,j})
=
\left|\mathcal C_n(j-1,\ell-j)\right|$. Therefore,
\begin{equation}\label{eq:321-catalan-sequences}
s_n(321,P_{\ell,4})
=
\left|\mathcal C_n(3,\ell-4)\right|.
\end{equation}
The two relevant cardinalities can be computed directly.
Let $d_i(t)$ denote the number of admissible weakly increasing prefixes
$(q_1,\ldots,q_i)$ ending with $q_i=t$. The admissibility conditions are
$i\leq t\leq n$ and
$t\notin\{i+a,i+a+1,\ldots,n-b\}$.
Thus
\[
d_1(t)=
\begin{cases}
1,&t\text{ is admissible},\\
0,&\text{otherwise},
\end{cases}
\]
and, for $i\ge2$,
\begin{equation}\label{eq:d-recurrence}
d_i(t)
=
\begin{cases}
\displaystyle\sum_{s\le t}d_{i-1}(s),
   &t\text{ is admissible},\\[2mm]
0,&\text{otherwise}.
\end{cases}
\end{equation}
Since the last coordinate must be $n$, we have
$|\mathcal C_n(a,b)|=d_n(n)$.
For $\mathcal C_6(3,1)$, recurrence
\eqref{eq:d-recurrence} gives
\[
\begin{array}{c|rrrrrr}
i\backslash t&1&2&3&4&5&6\\ \hline
1&1&1&1&0&0&1\\
2&0&2&3&3&0&4\\
3&0&0&5&8&8&12\\
4&0&0&0&13&21&33\\
5&0&0&0&0&34&67\\
6&0&0&0&0&0&101
\end{array}
\]
and hence $|\mathcal C_6(3,1)|=101$.
For $\mathcal C_7(3,2)$, the same recurrence gives
$|\mathcal C_7(3,2)|=319$.
It follows from \eqref{eq:321-catalan-sequences} that
$s_6(321,P_{5,4})=101$
and $s_7(321,P_{6,4})=319$.
Consequently, none of these three exceptional triples gives a Wilf equivalence.
\end{proof}

It remains only to consider the case $\ell\ge 3{,}274$.
In this range, it suffices to determine whether equation \eqref{move}
admits any positive integer solutions. In fact, we have the following theorem.
\begin{thm}\label{Thm-last}
There are no positive integer solutions of
\begin{align*}
    \dfrac\ell{(x+1)(\ell-x)}\binom{2x}x\binom{2\ell-2x-2}{\ell-x-1}=y\binom{2\ell-y-1}{\ell-1}
\end{align*}
with $\ell\ge3{,}274, 1\le x\le\ell-1$, and $3\le y\le\ell$.
\end{thm}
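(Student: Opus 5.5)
The plan is a prime-divisor argument. Write the equation in the equivalent form \eqref{Aim},
\[
C_aC_{m}=\frac{y}{\ell}\binom{2\ell-y-1}{\ell-1},\qquad m=\ell-1-a,\quad a=\min\{x,\ell-1-x\}\le\tfrac{\ell-1}{2}.
\]
The aim is to find a prime $p$ that divides exactly one side. Two facts drive this. First, every prime factor of $C_a$ is at most $2a$, and every prime factor of $C_m$ is at most $2m=2\ell-2-2a$. Second, Kummer's theorem gives the $p$-adic valuation of $\binom{2\ell-y-1}{\ell-1}$ as the number of carries when $\ell-1$ and $\ell-y$ are added in base $p$. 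Since $y\le\ell<p$ for every prime used below, the factors $y$ and $\ell$ do not affect $p$-adic valuations. Before starting I would use Lemma~\ref{F_decresing} to cut down the range of $y$: a solution forces $D(a,y)=C_a2^{y-2a-1}/y<1$. So either $y\le 2a$, or $y\ge 2a+1$ and then $C_a<y\le\ell$, which makes $a=O(\log\ell)$ and in any case $y\le 2a+1+\log_2 y$.

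\textbf{Case 1: $y\le 2a$.} Every prime $p\in(2\ell-2-2a,\,2\ell-y-1]$ satisfies $\ell\le p$ and $p>(2\ell-y-1)/2$, so it divides the right-hand side exactly once. It cannot divide the left-hand side, since $p>2m\ge 2a$. Hence no solution can exist unless this interval contains no prime. Its length is only $2a+1-y$, which may be small, so I would add a second family of primes. Take $p\in(m+1,\,\ell-1]$ with $p>(2\ell-y-1)/2$. Such a $p$ divides $C_m$ exactly once, because $m+1<p\le 2m$ once $a$ is not tiny, and it does not divide $C_a$. On the right-hand side, with $\ell-y<p\le\ell-1$, there is a carry iff $(\ell-1-p)+(\ell-y)\ge p$, i.e.\ iff $p\le(2\ell-y-1)/2$, which is excluded. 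So $p$ divides the left-hand side but not the right-hand side. The union of these two intervals essentially contains $\bigl(\max\{m+1,(2\ell-y-1)/2\},\,2\ell-y-1\bigr]\setminus(\ell-1,2\ell-2-2a]$, and I would show it always contains a prime of the right kind. For this I would invoke an explicit prime-in-short-interval theorem such as Nagura's (a prime in $(n,6n/5]$ for $n\ge25$) or a Dusart-type bound (a prime in $(n,n(1+1/(25\log^2 n))]$). I would split according to whether $a$ is a positive proportion of $\ell$ or not. When $a\ge\varepsilon\ell$, the interval $(m+1,\ell-1]$ has length about $a$, which is a positive proportion of its endpoints, and the explicit result applies directly. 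When $a<\varepsilon\ell$, we also have $y\le 2a<2\varepsilon\ell$; then $(2\ell-y-1)/2>m+1$, and I would instead work with the window $((2\ell-y-1)/2,\ell-1]$ together with $(2\ell-2-2a,2\ell-y-1]$. I expect the lower bound $\ell\ge3{,}274$ to be exactly where the explicit constants start to work.

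\textbf{Case 2: $y\ge 2a+1$, so $a=O(\log\ell)$.} Here the left-hand side is $C_a\cdot C_{\ell-1-a}$, whose large prime factors are the primes in $(\ell-a,\,2\ell-2-2a]$, each appearing once. Every prime in $(\ell-a,\,\ell-1]$ that also exceeds $(2\ell-y-1)/2$ does not divide the right-hand side, by the same carry computation. Since $y\le 2a+1+\log_2 y$ is only $O(\log\ell)$, this requires a prime in a window of length roughly $a$ just below $\ell$, which short-interval results cannot guarantee. I would therefore switch to the top of the range. Primes $p\in(2\ell-2-2a,\,2\ell-y-1]$ exist only if $y<2a+1$, which fails here. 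So instead compare the primes in $(\ell,\,2\ell-y-1]$: they divide the right-hand side once, and they divide the left-hand side iff $p\le 2\ell-2-2a$. Because $2a+1\le y$, we have $2\ell-y-1\le 2\ell-2-2a$, so every such prime does divide both sides. To get a contradiction I would then compare sizes exactly. Using $D(a,y)<1$ together with the monotonicity in Lemma~\ref{F_decresing}, I would show $F_\ell(a,y)\ne1$ directly: $F_\ell(a,y)$ is strictly decreasing in $\ell$ with limit $D(a,y)$, and for the finitely many pairs $(a,y)$ with $D(a,y)<1$ and $a,y$ small, one checks that $F_{3274}(a,y)<1$. For larger $y$, a prime $p\in(\ell-a,\ell-1]$ is used if one exists, and otherwise the exact ratio $F_\ell(a,y)$ is bounded away from $1$.

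The main obstacle is Case 1 with $a$ small relative to $\ell$ but not bounded. There the useful windows have length comparable to $a$ rather than to $\ell$. This forces a careful combination of the two prime families with the inequality $D(a,y)<1$, and it is where the explicit threshold $3{,}274$ must be extracted.
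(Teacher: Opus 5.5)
\textbf{There is a genuine gap in your Case~1, and it is not a matter of explicit constants.}

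Every prime your argument can use lies in one of two windows:
\[
\bigl(\max\{\ell-a,(2\ell-y-1)/2\},\,\ell-1\bigr] \quad\text{and}\quad (2\ell-2-2a,\,2\ell-y-1].
\]
Their lengths are roughly $y/2$ and $2a+1-y$, so both are $O(a)$. Explicit short-interval theorems (Nagura, Dusart) only guarantee primes in windows of length about $\ell/(\ln\ell)^2$. They therefore say nothing once $a=o(\ell/(\ln\ell)^2)$.

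For bounded $a$ the required prime genuinely need not exist. Take $(a,y)=(2,3)$. It satisfies $y\le 2a$ and $D(2,3)=1/6<1$, and your criterion asks that $\ell-1$ or $2\ell-5$ be prime. At $\ell=3280$ neither is: $3279=3\cdot1093$ and $6555=3\cdot5\cdot19\cdot23$. The congruence $\ell\equiv10\pmod{15}$ gives infinitely many such $\ell$.

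So bounded $a$ in Case~1 must also be sent to the monotonicity check $F_{3274}(a,y)<1$. The paper does this for all $a\le29$, where $D(a,y)<1$ forces $y\le15$. For the range $30\le a\ll \ell/(\ln\ell)^2$ your proposal has no workable mechanism; this is exactly the obstacle you flag at the end.

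\textbf{The missing idea is to look for a large prime factor of one of the integers $\ell-j$, rather than for a prime in an interval.} Clearing denominators in \eqref{Aim} gives
\[
\ell C_a\prod_{i=1}^{\lfloor y/2\rfloor}(\ell-i)\prod_{j=y}^{a-1}(\ell-j)
=y\,2^{a-\lfloor y/2\rfloor}\prod_{r=\lceil y/2\rceil}^{a}(2\ell-2r-1).
\]
A prime $p>2a+1$ cannot divide both some $\ell-i$ (with $0\le i\le a$) and some $2\ell-2r-1$. Indeed, $2(\ell-i)-(2\ell-2r-1)=2r+1-2i$ is a nonzero odd integer of absolute value at most $2a+1$.

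Next, $D(a,y)<1$ forces $y\le a/2$ once $a\ge30$ (Lemma~\ref{lem-bound-a}). Hence $\prod_{j=y}^{a-1}(\ell-j)$ is a product of $m=a-y\ge a/2$ consecutive integers starting at $N=\ell-a+1$.

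\begin{itemize}
\item When $N>4m$, roughly $\ell>5a$, which is precisely your problematic regime: the Sylvester-type theorem of Nair and Shorey (Lemma~\ref{lem-number-2}) gives a prime factor $p>4.42m>2a+1$ of that product. This $p$ divides the left side but not the right.
\item When $N\le4m$, so $\ell\le5a$, a short-interval result is enough. Dusart's theorem (Lemma~\ref{lem-number-1}) places a prime among the odd factors $A,A+2,\dots,B$, where $A=2\ell-2a-1<8a$ and $B-A\ge 3a/2-1$. That prime divides the right side only. This corresponds to your ``$a\ge\varepsilon\ell$'' subcase, the one part of Case~1 your interval approach can handle.
\end{itemize}

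\textbf{Smaller points.}
\begin{itemize}
\item Your claim $y\le\ell<p$ fails for the second family, where $p\le\ell-1$. You must therefore exclude $p=y$.
\item When $a=(\ell-1)/2$, the prime $p=\ell$ in the first window divides neither side, so it must be excluded as well.
\item Your Case~2 is actually finite ($a\le3$, $y\le7$), so the finite check covers it entirely.
\end{itemize}
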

Indeed, it suffices to consider the transformed equation \eqref{Aim}.
\begin{lem}\label{lem:no-solutions-large-l}
No solution of \eqref{Aim} exists with $0\le a\le29$ and $\ell\ge 3{,}274$.
\end{lem}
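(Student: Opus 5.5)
The plan is to combine the monotonicity of Lemma~\ref{F_decresing} with a finite check. For each fixed $a\in\{0,1,\dots,29\}$, equation \eqref{Aim} reads $F_\ell(a,y)=1$. Since $F_\ell(a,y)$ strictly decreases in $\ell$ towards $D(a,y)=C_a2^{y-2a-1}/y$, a solution can exist only when $D(a,y)<1$. In that case there is at most one $\ell$ with $F_\ell(a,y)=1$.

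\textbf{Step 1: reduce to finitely many pairs $(a,y)$.} The condition $C_a2^{y-2a-1}<y$ forces $y$ to be small. Because $C_a\ge1$ and $2^{y-2a-1}$ eventually dominates $y$, we get $y\le 2a+1+O(\log a)$. For $a\le29$ this gives an explicit, short list $L$ of pairs $(a,y)$; for example, $a=0$ allows only $y\le 2$, which is excluded by $y\ge3$. I would compute $L$ directly from the inequality.

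\textbf{Step 2: dispose of most pairs by monotonicity.} For each $(a,y)\in L$, evaluate $F_{3274}(a,y)$ exactly as a rational number. If $F_{3274}(a,y)<1$, then $F_\ell(a,y)<1$ for all $\ell\ge3274$ by Lemma~\ref{F_decresing}, so there is no solution. I expect this to handle nearly all pairs, because the convergence $F_\ell\to D$ is governed by the ratio formula in Lemma~\ref{F_decresing}. That formula gives $F_\ell(a,y)=D(a,y)\bigl(1+O(a^2+y^2)/\ell\bigr)$, and $D(a,y)$ is typically bounded away from $1$.

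\textbf{Step 3: the delicate pairs, with $D(a,y)$ close to or below $1$ but $F_{3274}(a,y)\ge1$.} For these, I would first use the explicit product form of $F_{\ell+1}/F_\ell$ to bound $F_\ell(a,y)-D(a,y)$ by an explicit $K(a,y)/\ell$. This yields an explicit $\ell_0(a,y)$ beyond which $F_\ell<1$, and hence a finite window $3274\le\ell<\ell_0$ to examine. Inside that window I would not test all $\ell$ numerically. Instead I would use a prime-divisor argument.

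Write both sides as $C_a\binom{2\ell-2a-2}{\ell-a-1}/(\ell-a)$ and $y\binom{2\ell-y-1}{\ell-1}/\ell$. When $y\le 2a$, any prime $p$ with $2\ell-2a-2<p\le 2\ell-y-1$ divides the right-hand side, since $p>\ell\ge y$. The same prime does not divide the left-hand side, since $p>29\ge$ every prime factor of $C_a$ and every factorial factor on the left is at most $2\ell-2a-2$. When $y\ge 2a+2$, the roles swap, using primes in $(2\ell-y-1,\,2\ell-2a-2]$. When $y=2a+1$, I would instead compare Kummer-type $p$-adic valuations for primes $p$ slightly larger than $\ell-a$, or divisibility by $\ell$ versus $\ell-a$.

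Since these intervals have length at most about $60$, they need not contain a prime for every $\ell$. So for each $\ell$ in the window I would search for some prime (or prime power, via Kummer's carry count) that separates the two sides. This is fast, and any remaining $\ell$ would be checked by exact integer arithmetic.

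The main obstacle is Step~3. The prime-gap intervals are too short for a clean theoretical guarantee, so the argument must rely on the explicit bound $\ell_0(a,y)$ being small enough to make the window finite and manageable. Obtaining a sharp, rigorous $K(a,y)$ from the ratio formula in Lemma~\ref{F_decresing}, uniformly for all pairs in $L$, is where I expect most of the work to go.
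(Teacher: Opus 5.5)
Your Steps~1--2 are exactly the paper's proof: it bounds $y$ using $C_a\ge 4^a/((a+1)(2a+1))$ (so $3\le y\le 15$ for $a\le 29$), verifies by exact integer arithmetic that $F_{3274}(a,y)<1$ for every pair with $D(a,y)<1$, and concludes by the monotonicity in Lemma~\ref{F_decresing}. Your Step~3 is never triggered and can be dropped: for $a\le 29$ the largest values of $D(a,y)$ below $1$ are about $0.95$, at $(a,y)=(9,10)$ and $(21,12)$, while the ratio formula gives $F_{3274}(a,y)/D(a,y)\approx\exp\bigl(\frac{(y-1)(y-2)+6a}{4\cdot 3274}\bigr)<1.03$, so there are no delicate pairs (as the paper's exact check confirms).
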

\begin{proof}
By Lemma~\ref{F_decresing} we only need to consider the pairs $(a,y)$ satisfying $D(a,y)<1$.
Since $D(a,y)<1
\quad\Longleftrightarrow\quad
C_a2^y<y\,2^{2a+1}$,
we seek an upper bound for $y$. Using the fact that $\binom{2a}{a}$ is the largest binomial
coefficient in the expansion of \((1+1)^{2a}\), we have
$4^a\le
(2a+1)\binom{2a}{a}$.
Hence
\begin{align*}
    C_a
=
\frac{1}{a+1}\binom{2a}{a}
\ge
\frac{4^a}{(a+1)(2a+1)}.
\end{align*}
Combining this inequality with \(C_a2^y<y2^{2a+1}\), we obtain
\begin{align*}
\frac{4^a}{(a+1)(2a+1)}\,2^y
<
y\,2^{2a+1},
\end{align*}
which reduces to
$2^{y-1}<y(a+1)(2a+1)$.
For $0\le a\le29$, we have
$2^{y-1}<1770y$.
Thus the necessary condition $D(a,y)<1$ forces $3\le y\le15$.

It remains only to consider the finite range
$0\leq a\leq29$, $3\leq y\leq15$, and $D(a,y)<1$. For these pairs, a
Python computation given in Appendix~\ref{app:code} shows that
$F_{3{,}274}(a,y)<1$. By Lemma~\ref{F_decresing},
$F_\ell(a,y)$ is strictly decreasing in $\ell$. Hence, for every
$\ell\ge 3{,}274$,
$F_\ell(a,y)\leq F_{3{,}274}(a,y)<1$.
Thus, $F_\ell(a,y)\neq1$, and therefore the original equation has no
integer solutions in the stated range.
\end{proof}

Next, we analyze the solutions in the remaining range of parameters.
\begin{lem}\label{lem-bound-a}
Any solution of \eqref{Aim} with $\ell \ge 3{,}274$ has
$a\ge30$, and $y\le\frac a2$.
\end{lem}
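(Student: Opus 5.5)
The claim $a\ge 30$ follows at once from Lemma~\ref{lem:no-solutions-large-l}. Any solution with $\ell\ge 3{,}274$ and $0\le a\le 29$ is excluded there, so every remaining solution has $a\ge30$. The real content is the bound $y\le a/2$. I plan to get it from the same necessary condition used before, namely $D(a,y)<1$ from Lemma~\ref{F_decresing}. Since $F_\ell(a,y)$ decreases strictly in $\ell$ towards $D(a,y)$, we have $F_\ell(a,y)>D(a,y)$, so a solution $F_\ell(a,y)=1$ forces $D(a,y)<1$. As computed in the proof of Lemma~\ref{lem:no-solutions-large-l}, this means $C_a2^{y}<y\,2^{2a+1}$. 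Using $C_a\ge 4^a/\bigl((a+1)(2a+1)\bigr)$, it implies
\begin{align*}
2^{y-1}<y(a+1)(2a+1).
\end{align*}

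Now suppose, for contradiction, that $y>a/2$, i.e.\ $a\le 2y-1$. Then $(a+1)(2a+1)\le 2y(4y-1)$, so the displayed inequality gives
\begin{align*}
2^{y-1}<2y^2(4y-1).
\end{align*}
The left side is exponential and the right side cubic, so this fails for all $y$ beyond a small threshold. I expect it to hold only up to about $y=17$, and I would determine the exact cutoff $y_0$ by direct evaluation. Hence every such solution has $y<y_0$. Combined with $a\le 2y-1$ and $a\ge30$, this leaves only finitely many pairs $(a,y)$, roughly $30\le a\le 33$ and $\lceil (a+1)/2\rceil\le y\le y_0$.

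For these leftover pairs the crude bound $C_a\ge 4^a/((a+1)(2a+1))$ is too lossy, so I will use the exact value of $C_a$. I will check directly that $D(a,y)=C_a2^{y-2a-1}/y\ge1$ in each case, which contradicts $D(a,y)<1$. A heuristic supports this: $C_a\approx 4^a/(\sqrt{\pi}a^{3/2})$, so $D(a,y)\approx 2^{y-1}/(\sqrt\pi\, y\, a^{3/2})$, and with $y>a/2\ge15$ this is far larger than $1$. If instead some pair were to fail $D(a,y)\ge 1$, I would fall back on computing $F_{3{,}274}(a,y)$ and showing it is $>1$ or $<1$ but not equal. Monotonicity then still excludes $F_\ell(a,y)=1$ for $\ell\ge 3{,}274$.

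The main obstacle is the bookkeeping in this small overlap window, where the asymptotic inequality has not yet taken over. There I must either tighten the Catalan estimate, for instance with $C_a\ge 4^a/(\sqrt{\pi}\,(a+1)^{3/2})$ or a similar bound, or finish by an explicit finite evaluation. I would do the explicit evaluation first, since only a handful of pairs are involved.
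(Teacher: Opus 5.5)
Your argument is correct. It follows the paper up to the inequality $2^{y-1}<y(a+1)(2a+1)$ and differs only in which variable you eliminate afterwards.

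\textbf{How the two routes differ.} The paper fixes $a$ and uses that $2^{y-1}/y$ is increasing to replace $y$ by its smallest value under the assumption $y>a/2$, namely $y_0=\lfloor a/2\rfloor+1$. It then proves $2^{y_0-1}/y_0\ge(a+1)(2a+1)$ for every $a\ge30$ by checking $a=30,31$ and inducting in steps of two. You instead push $a$ up to $2y-1$ and get the one-variable inequality $2^{y-1}<2y^2(4y-1)$. This is slightly cleaner because it avoids the parity split. Your check at $y=16$ is exactly the paper's base case $a=31$ ($2048>2016$) multiplied by $16$.

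\textbf{Your predicted cutoff is off, and the ``main obstacle'' disappears.} The inequality $2^{y-1}<2y^2(4y-1)$ already fails at $y=16$, since $2^{15}=32768>32256=2\cdot16^2\cdot63$. It keeps failing for all $y\ge16$: at each step the left side doubles, while the right side grows by the factor $(y+1)^2(4y+3)/\bigl(y^2(4y-1)\bigr)$, which is below $2$ there. Hence $y\le15$, so $a\le2y-1\le29$, which contradicts $a\ge30$ at once. The window $30\le a\le33$ you anticipated is empty, and no exact Catalan evaluations are needed.

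\textbf{Your fallback would have been flawed in one branch.} Suppose $F_{3{,}274}(a,y)>1$ while $D(a,y)<1$. Then the strict decrease of $F_\ell(a,y)$ towards $D(a,y)$ does not rule out $F_\ell(a,y)=1$ for some larger $\ell$. Only the branch $F_{3{,}274}(a,y)<1$ is settled by monotonicity. This is moot here, since no pairs remain.
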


\begin{proof}
By Lemma~\ref{lem:no-solutions-large-l}, any solution with $\ell\ge 3{,}274$ must satisfy
$a\ge30$, and the existence of a solution implies
$\frac{2^{y-1}}{y}<(a+1)(2a+1)$.
We prove \(y\le a/2\) by contradiction. Suppose instead that $y>\frac{a}{2}$.
Since $y$ is an integer, this implies
$y\ge y_0:=\left\lfloor\frac{a}{2}\right\rfloor+1$.
We know that $\frac{2^{y-1}}{y}$ is increasing for $y\ge 2$.
Consequently,
\begin{align}
    \frac{2^{y_0-1}}{y_0}
<
(a+1)(2a+1).\label{eq:y0-necessary}
\end{align}
We shall show that, for every $a\ge30$, the reverse strict inequality
holds.
We prove by induction in steps of two. 
We can check the reverse strict inequality holds for $a=30$ and $a=31$.
Now suppose the reverse strict inequality holds for some $a\ge30$.
When $a$ is replaced by $a+2$,
the factor by which the left-hand side of \eqref{eq:y0-necessary} increases is $\frac{2y_0}{y_0+1}$.
Since $a\ge30$, we have  $\frac{2y_0}{y_0+1}
\ge
\frac{32}{17}$.
On the other hand, for $a\ge 30$ the factor by which the right-hand side increases is
$\frac{(a+3)(2a+5)}{(a+1)(2a+1)}<
\frac{32}{17}$.
This contradicts \eqref{eq:y0-necessary}, which proves the lemma.
\end{proof}
To prove Theorem~\ref{Thm-last}, we also need the following results from number theory.
\begin{lem}[{\cite[Theorem 1.9]{Dusart1998}}]\label{lem-number-1}
For every real number $X\ge3{,}275$, there is a prime $p$ such that
\begin{align*}
X<p\le X\left(1+\dfrac1{2(\ln X)^2} \right).    
\end{align*}
\end{lem}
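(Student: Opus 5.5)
The statement is a purely analytic fact about primes, independent of the combinatorics above. I would prove it through explicit estimates for Chebyshev's function $\theta(t)=\sum_{p\le t}\ln p$, splitting into a large range handled analytically and a bounded range handled by computation. The key observation is that the interval $(X,X(1+\delta)]$ with $\delta=\frac{1}{2(\ln X)^2}$ contains a prime if and only if $\theta(X(1+\delta))-\theta(X)>0$. So it suffices to prove an explicit bound of the form
\begin{align*}
|\theta(t)-t|<\varepsilon(t)\,\frac{t}{(\ln t)^2}\qquad (t\ge t_0),
\end{align*}
with $\varepsilon$ small enough that the main term $X\delta$ dominates the two error terms.

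\textbf{Large range.} I would take as input the explicit estimate $|\theta(t)-t|<0.2\,t/(\ln t)^2$ for $t\ge t_0$, with $t_0$ of order a few million (Dusart obtains $t_0=3{,}594{,}641$). Establishing this estimate is the genuine obstacle. It rests on an explicit zero-free region for $\zeta$, a numerical verification of the Riemann hypothesis up to a large height, and explicit-formula bounds for $\psi(t)-t$. I would import these rather than reprove them.

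Granting the estimate, write $Y=X(1+\delta)$. Then
\begin{align*}
\theta(Y)-\theta(X)> X\delta-0.2\,\frac{Y}{(\ln Y)^2}-0.2\,\frac{X}{(\ln X)^2}.
\end{align*}
Since $Y\le X(1+\delta)$ and $\ln Y>\ln X$, the two error terms total at most
\begin{align*}
0.2(2+\delta)\,\frac{X}{(\ln X)^2}.
\end{align*}
The main term is $X\delta=\frac{X}{2(\ln X)^2}$, and $0.5>0.4+0.2\delta$ whenever $\delta<1/2$, which certainly holds here. Hence the difference is positive, and a prime exists in the interval for all $X\ge t_0$.

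\textbf{Bounded range $3{,}275\le X<t_0$.} I would handle this by direct computation, using a chain of primes rather than a check at every real $X$. Choose primes $p_0<p_1<\cdots<p_m$ with $p_0\le 3{,}275$ (for instance $p_0=3{,}271$) and $p_m\ge t_0$, such that each satisfies
\begin{align*}
p_{k+1}\le p_k\Bigl(1+\frac{1}{2(\ln p_k)^2}\Bigr).
\end{align*}
Now take any $X$ in the range and let $p_k\le X<p_{k+1}$. Then $p_{k+1}>X$, and since $t\mapsto t\bigl(1+\frac{1}{2(\ln t)^2}\bigr)$ is increasing for $t\ge 3$,
\begin{align*}
p_{k+1}\le p_k\Bigl(1+\frac{1}{2(\ln p_k)^2}\Bigr)\le X\Bigl(1+\frac{1}{2(\ln X)^2}\Bigr).
\end{align*}
So $p_{k+1}$ is the required prime. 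Such a chain exists in practice because the maximal prime gaps below $4\cdot 10^6$ are at most a few hundred, whereas the allowed gap $X/(2(\ln X)^2)$ is about $25$ near $X=3{,}275$ and grows to roughly $8{,}000$ near $4\cdot10^6$.

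The delicate part of the computation is the lower end. Near $3{,}275$ the allowed gap is only about $25$, so the threshold $3{,}275$ is presumably chosen exactly where some gap first becomes too large relative to $X/(2(\ln X)^2)$. I expect this endpoint check, together with importing the explicit bound on $\theta$, to be the main work.
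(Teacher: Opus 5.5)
The paper gives no proof of this lemma; it is taken from Dusart. Dusart's own argument has your two-range shape: the bound $|\theta(t)-t|<0.2\,t/(\ln t)^2$ for $t\ge 3{,}594{,}641$, followed by a finite check. Your large-range step is correct, since the main term $X/(2(\ln X)^2)$ beats the total error $0.2(2+\delta)X/(\ln X)^2$.

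\textbf{The bounded-range step fails as you set it up.} No chain of primes with $p_0\le 3{,}275$ and $p_{k+1}\le p_k\bigl(1+\frac{1}{2(\ln p_k)^2}\bigr)$ for every link can exist.

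\begin{itemize}
\item The largest prime not exceeding $3{,}275$ is $3{,}271$, and the next prime is $3{,}299$; every odd number from $3{,}273$ to $3{,}297$ is composite.
\item So some link must go from a prime $p_k\le 3{,}271$ to a prime $p_{k+1}\ge 3{,}299$.
\item But $p_k\bigl(1+\frac{1}{2(\ln p_k)^2}\bigr)\le 3{,}271\bigl(1+\frac{1}{2(\ln 3{,}271)^2}\bigr)\approx 3{,}295.97<3{,}299$.
\end{itemize}

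In particular, your suggested start $p_0=3{,}271$ breaks at the very first link. This does not depend on the choice of $p_0$. The gap $3{,}271\to 3{,}299$ is exactly what forces the threshold: the statement is false at $X=3{,}274$, where the upper endpoint is $\approx 3{,}298.99$. So any chain anchored at a prime below the threshold must break there. Your remark that maximal prime gaps below $4\cdot 10^6$ are only a few hundred does not address this, because at the low end the allowance is about $25$.

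\textbf{The fix} is to anchor at the real number $3{,}275$ itself rather than at a prime.
\begin{itemize}
\item Since $3{,}275\bigl(1+\frac{1}{2(\ln 3{,}275)^2}\bigr)\approx 3{,}299.99\ge 3{,}299$, monotonicity of $t\mapsto t\bigl(1+\frac{1}{2(\ln t)^2}\bigr)$ gives the prime $3{,}299$ for every $X\in[3{,}275,3{,}299)$.
\item From $p_0=3{,}299$ onward, a chain of consecutive primes can be used exactly as you describe.
\end{itemize}

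The margin at $X=3{,}275$ is under $1$, and the check stays tight for a while afterwards. For example, the gap $4{,}297\to 4{,}327$ has length $30$ against an allowance of about $30.7$. So the low end needs the actual computation, not a comparison with gap tables.
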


The following result is an immediate consequence of Theorem~2 of Nair and Shorey \cite{NairShorey2016}.
\begin{lem}\label{lem-number-2}
Let $N,m$ be positive integers. If $N>4m$, $m>3$, and $N+m>150$,
then the product $N(N+1)\cdots(N+m-1)$ possesses a prime divisor greater than $4.42m$.
\end{lem}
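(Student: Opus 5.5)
The plan is to obtain the lemma as a direct specialization of Theorem~2 of Nair and Shorey \cite{NairShorey2016}, so the work is mostly translating notation and handling their exceptional cases. Their setting concerns $\Delta(N,m)=N(N+1)\cdots(N+m-1)$ and its greatest prime factor $P(\Delta(N,m))$. I expect their Theorem~2 to state that, for $N>4m$ (possibly with a mild condition on $m$), one has $P(\Delta(N,m))>4.42m$ except for an explicitly listed finite set of pairs $(N,m)$. The first step is to state that theorem precisely in the notation of this paper: identify their $n$ with our $N$ and their $k$ with our $m$. I will also check whether their inequality is strict, and whether the threshold is $4.42m$ or a slightly larger constant that implies it.

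The second step is to dispose of the exceptions. I will go through the finite exceptional list and show that each exceptional pair violates at least one of our hypotheses. In practice I expect every exception to have either $m\le 3$ or $N+m\le 150$; this is exactly why those two conditions appear in the statement. If the published list is instead phrased through conditions such as $N\le$ some explicit bound depending on $m$, I will convert them. For each $m>3$, I will show that such a bound forces $N+m\le150$.

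The third step, used only as a safety net, covers any residual small range where the conversion leaves a gap. For example, their theorem might assume $m$ larger than some small constant, or the exceptional region might be described by $N$ below a bound slightly exceeding $150-m$. For the finitely many pairs $(N,m)$ with $m>3$, $N>4m$ and $N+m$ in that bounded window, I will verify $P(\Delta(N,m))>4.42m$ directly. This can be done by exhibiting, for each $m$, a prime $p>4.42m$ dividing one of the $m$ consecutive integers, which is easy since the range is tiny.

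The main obstacle is the bookkeeping in the second step: matching the exact form of the exceptional set in \cite{NairShorey2016} to the clean hypotheses $m>3$ and $N+m>150$. No new mathematics is needed there, but an off-by-one or a missed exceptional pair would invalidate the application in the proof of Theorem~\ref{Thm-last}. So I would list the exceptions explicitly in the write-up.
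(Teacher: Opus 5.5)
Your plan matches the paper: the paper gives no separate proof and simply records this lemma as an immediate consequence of Theorem~2 of Nair and Shorey. Specializing that theorem and checking that the hypotheses $m>3$ and $N+m>150$ rule out its exceptional cases is exactly the intended route. One caution: your third-step fallback is a finite check only if the leftover region is bounded in $N$; it would not be finite if the cited theorem excluded some small values of $m$ with $N$ unrestricted, although the paper's claim that the lemma follows immediately suggests no such residue arises.
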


With the above preparations in place, we are now ready to prove Theorem~\ref{Thm-last}. Using Lemmas~\ref{lem-number-1} and~\ref{lem-number-2}, 
we can directly analyze the existence of solutions to equation \eqref{Aim}.

\begin{proof}[Proof of Theorem~\ref{Thm-last}]
Suppose, to the contrary, that
$(\ell,a,y)$ is a solution of \eqref{Aim} with
$\ell\ge 3{,}274$. 
By Lemma~\ref{lem-bound-a}, we have $a\ge30$, and $y\le\frac{a}{2}$.

Expanding $C_{\ell-a-1}$ in \eqref{Aim}, we have
\begin{align*}
\frac{\ell C_a}{\ell-a}
=
y\,
\frac{\binom{2\ell-y-1}{\ell-1}}
     {\binom{2\ell-2a-2}{\ell-a-1}}.
\end{align*}
Equivalently,
\begin{equation}\label{eq:product-first}
\ell C_a
\prod_{i=1}^{a}(\ell-i)
\prod_{j=y}^{a-1}(\ell-j)
=
y\prod_{k=y+1}^{2a+1}(2\ell-k).
\end{equation}

We next cancel the factors corresponding to even values of $k$ on
the right-hand side. If $k=2i$, then
$2\ell-k=2(\ell-i)$.
The even values of $k$ in the interval
$y+1\le k\le2a+1$ correspond precisely to
$\left\lfloor\frac{y}{2}\right\rfloor+1
\le i\le a$.
After cancelling the associated factors $\ell-i$ from the first
product on the left-hand side of \eqref{eq:product-first}, we obtain
\begin{equation}\label{eq:product-reduced}
\ell C_a
\prod_{i=1}^{\lfloor y/2\rfloor}(\ell-i)
\prod_{j=y}^{a-1}(\ell-j)
=
y\,2^{a-\lfloor y/2\rfloor}
\prod_{r=\lceil y/2\rceil}^{a}(2\ell-2r-1).
\end{equation}
Define $m=a-y$ and $N=\ell-a+1$.
Then
\begin{align*}
    \prod_{j=y}^{a-1}(\ell-j)
=
N(N+1)\cdots(N+m-1),
\end{align*}
which is a product of $m$ consecutive positive integers. 
We know that $ m=a-y\ge\frac{a}{2}\ge15$.
We distinguish two cases.

\noindent \textbf{Case 1:} $N>4m$

Since $N+m=\ell-y+1$
and $y\le\frac{a}{2}\le\frac{\ell}{2}$,
we have
$N+m\ge\frac{\ell}{2}+1>150$.
Therefore, Lemma~\ref{lem-number-2} guarantees that the product
$N(N+1)\cdots(N+m-1)$
has a prime divisor $p$ satisfying $p>4.42m$.
Then there exists some $j\in\{y,\ldots,a-1\}$ such that $p\mid\ell-j$.

We claim that $p$ does not divide the right-hand side of
\eqref{eq:product-reduced}.
Since $m\ge a/2$ and $a\ge30$, we have $p>4.42m\ge2.21a>2a+1>y$, and $p$ is odd, $p\nmid y2^{a-\lfloor y/2\rfloor}$. If $p$ divided the remaining
product on the right, then for some
$\left\lceil\frac{y}{2}\right\rceil\le r\le a$
we would have $p\mid 2\ell-(2r+1)$.
Combining this with $p\mid\ell-j$ gives
\begin{align*}
p\mid
2(\ell-j)-\bigl(2\ell-(2r+1)\bigr)
=
2r+1-2j.
\end{align*}
However, $2r+1-2j$ is a nonzero odd integer and $0<|2r+1-2j|\le2a+1<p$,
which is impossible. Thus $p$ divides the left-hand side of
\eqref{eq:product-reduced} but not its right-hand side, a
contradiction.

\noindent \textbf{Case 2:} $N\le4m$

Set $A=2\ell-(2a+1)$, $B=2\ell-
\left(2\left\lceil\frac{y}{2}\right\rceil+1\right)$.
Then the final product on the right-hand side of
\eqref{eq:product-reduced} is
\begin{align*}
\prod_{r=\lceil y/2\rceil}^{a}(2\ell-2r-1)
=
A(A+2)(A+4)\cdots B.
\end{align*}
Since $a\le\frac{\ell-1}{2}$, 
we have $A=2\ell-(2a+1)\ge\ell$ and $A\ge2a+1$.
Moreover, $A=2\ell-2a-1$ is odd, so we have $A\ge 3{,}275$.
Also, since $N\le4m$ gives $\ell-a+1\le4(a-y)$,
so $\ell\le5a-4y-1$.
Consequently, $A=2\ell-2a-1
\le8a-8y-3<8a$.
On the other hand,
$B-A=2a-2\left\lceil\frac{y}{2}\right\rceil\ge\frac{3a}{2}-1$.

By Lemma~\ref{lem-number-1}, since $A\ge3{,}275$, there exists a prime number $q$
such that
\begin{align*}
A<q\le
A\left(1+\frac{1}{2(\ln A)^2}\right).    
\end{align*}
Furthermore,
\begin{align*}
\frac{A}{2(\ln A)^2}
<
\frac{8a}{2(\ln3{,}275)^2}<0.062a<\frac{3a}{2}-1\le B-A.
\end{align*}
It follows that $A<q\le B$.
So $q$ is one of the factors $A,A+2,\ldots,B$.
Therefore $q$ divides the right-hand side of
\eqref{eq:product-reduced}.

We now show that \(q\) does not divide its left-hand side. First,
$q>A\ge2a+1$,
so $q\nmid C_a$.
The other factors on the left-hand side have the form $\ell-i$,
where
\begin{align*}
i\in
\left\{0,1,\ldots,\left\lfloor\frac{y}{2}\right\rfloor\right\}
\cup
\{y,y+1,\ldots,a-1\};    
\end{align*}
Since $q$ is a factor of the right-hand product, there exists
$\left\lceil\frac{y}{2}\right\rceil\le r\le a$ such that $q=2\ell-(2r+1)$.
If \(q\mid\ell-i\) for one of the above values of \(i\), then
$q\mid
2(\ell-i)-(2\ell-(2r+1))
=
2r+1-2i$.
But $2r+1-2i$ is a nonzero odd integer satisfying
$0<|2r+1-2i|\le2a+1<q$,
again a contradiction. Hence $q$ does not divide any factor on
the left-hand side of \eqref{eq:product-reduced}.
Both cases lead to contradictions. Therefore
\eqref{Aim} has no solutions with
$\ell\ge 3{,}274$.
\end{proof}
Therefore, $(213,P_{\ell,x})$ and $(321,P_{\ell,y})$ are not
Wilf-equivalent, as claimed. This completes the classification of all
Wilf-equivalence classes.

\section{A distribution for quadrant marked mesh patterns}\label{S4}
Sections~\ref{S2} and~\ref{S3} were devoted to the classification of
Wilf equivalences for permutations simultaneously avoiding a pattern of
length three and $P_{\ell,x}$. In this section, we turn to a distribution
problem. In particular, we prove the following identity, conjectured by
Qiu and Remmel \cite{QiuRemmel2018}.

\begin{thm}\label{thm-distribution}
For all $k\ge1$, we have
\begin{align*}
Q_{132}^{(0,k,\emptyset,0)}(t,q)=Q_{132}^{(1,k-1,\emptyset,0)}(t,q).  
\end{align*}
\end{thm}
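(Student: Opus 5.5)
We plan to prove Theorem~\ref{thm-distribution} by translating both statistics into statements about left-to-right minima, and then comparing the two generating functions through the standard first-return decomposition of $132$-avoiding permutations.

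\textbf{Step 1: reduce both statistics to left-to-right minima.} The condition $\emptyset$ in quadrant III says that the point $i$ is a left-to-right minimum. For such a point every entry to its left is larger, so $Q_{\mathrm{II}}(i)=\operatorname{Pos}_\pi(i)-1$. Hence $\operatorname{mmp}^{(0,k,\emptyset,0)}(\pi)$ is the number of left-to-right minima in positions $\ge k+1$. Similarly, $\operatorname{mmp}^{(1,k-1,\emptyset,0)}(\pi)$ is the number of left-to-right minima in positions $\ge k$ that have a larger entry to their right, that is, with $Q_{\mathrm I}(i)=n-i-Q_{\mathrm{II}}(i)\ge1$.

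\textbf{Step 2: set up recursions for refined generating functions.} Write $\pi\in\operatorname{Av}_n(132)$ as $\pi=A\,n\,B$, where every entry of $A$ exceeds every entry of $B$ and both $A,B$ avoid $132$. Then:
\begin{itemize}
\item[(i)] the left-to-right minima of $\pi$ are those of $A$, together with $n$ when $A$ is empty, together with all left-to-right minima of $B$;
\item[(ii)] a left-to-right minimum of $B$ in position $p$ of $B$ sits in position $|A|+1+p$ of $\pi$;
\item[(iii)] every entry of $A$ has the larger entry $n$ to its right, so the condition $Q_{\mathrm I}\ge1$ is automatic on $A$, and on $B$ it is inherited from $B$.
\end{itemize}
This yields, for every $j\ge0$, recursions expressing $Q_{132}^{(0,j,\emptyset,0)}$ and $Q_{132}^{(1,j-1,\emptyset,0)}$ in terms of the same functions with smaller shifts $j-|A|-1$. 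When $|A|\ge j$, the contribution of $B$ is simply $q^{\#\mathrm{lmin}(B)}$, respectively $q$ raised to the number of left-to-right minima of $B$ with a larger entry to the right (all of them except a final entry $1$). We will also need the Narayana-type generating function for left-to-right minima on $\operatorname{Av}(132)$ and its variant that excludes a terminal minimum. These are classical and follow from the same decomposition.

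\textbf{Step 3: induct on $k$.} With these recursions in hand we prove $Q_{132}^{(0,k,\emptyset,0)}=Q_{132}^{(1,k-1,\emptyset,0)}$ by induction on $k$. The base case $k=1$ compares left-to-right minima other than the first entry with left-to-right minima other than a final $1$. We handle it by the explicit generating functions, or by the bijection that moves the first entry to the end, in the spirit of the map $\varphi$ in Theorem~\ref{thm_231_method_1}. For the inductive step, the $A$-contributions agree after the index shift by one, because of (iii). The $B$-contributions for $|A|\ge k$ reduce to the base case. The $B$-contributions for $|A|<k$ reduce to the induction hypothesis at $k-|A|-1$.

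The main obstacle is the bookkeeping in Step 3. The shift in (ii) is off by one between the two statistics, and the "larger entry to the right" condition interacts with the boundary case where $B$ is empty or ends in its minimum. This is presumably where the error in \cite{QiuRemmel2018} occurs. We will therefore write both recursions explicitly in the same variables and check the boundary terms separately. As a fallback, if the algebra does not close, we would instead try to build a bijection generalizing $\varphi$: detach the block of left-to-right minima in positions $\ge k$ whose larger entries all lie to their left, and re-append it reversed. Lemma~\ref{lem:132-quadrant} would then be used to verify that $132$-avoidance is preserved.
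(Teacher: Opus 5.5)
Your Step 1 and observations (i)--(iii) are correct, and the decomposition $\pi=A\,n\,B$ is a reasonable starting point. However, two steps fail as stated.

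\textbf{The base case.} The parenthetical in Step 2, and with it your formulation of the base case, is wrong. A left-to-right minimum in position $p$ has no larger entry to its right exactly when $\pi_1\cdots\pi_p$ consists of the $p$ largest values. So in $312$ the minimum $3$ does not count, and in $321$ none does. Under your reading the base case is false. On $\operatorname{Av}_3(132)$, the left-to-right minima other than the first have distribution $1+3q+q^2$, while the left-to-right minima other than a final $1$ have distribution $2q+3q^2$. Moving the first entry to the end also fails, since it sends $213$ to $132$.

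This part is easy to repair inside your framework. Write $A_k=Q_{132}^{(0,k,\emptyset,0)}$, $B_k=Q_{132}^{(1,k-1,\emptyset,0)}$, and let $A_0$ be the left-to-right-minima generating function. Your decomposition gives $A_1=1+tA_1A_0$ and $B_1=1+tA_0B_1$, hence $A_1=B_1$.

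\textbf{The inductive step.} This is the real gap: the induction does not close term by term. Set $S_m=\sum_{a=0}^{m}C_at^a$, with $S_{-1}=0$. Your decomposition gives, for $k\ge1$,
\begin{align*}
A_k&=1+t\sum_{a=0}^{k-2}C_at^aA_{k-1-a}+tA_0\,(A_k-S_{k-2}),\\
B_k&=1+t\sum_{a=0}^{k-2}C_at^aB_{k-1-a}+tB_1\,(A_{k-1}-S_{k-2}).
\end{align*}
These are equivalent to the Qiu--Remmel recurrence the paper starts from and to the corrected form of their Theorem~14, respectively.

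The terms with $|A|\le k-2$ agree by induction. The terms with $|A|\ge k-1$ do not, for two reasons:
\begin{itemize}
\item The prefix $A$ is weighted by its left-to-right minima in positions $\ge k+1$ on one side and in positions $\ge k$ on the other. These are different statistics, so (iii) does not make the $A$-contributions ``agree after a shift''.
\item The factor coming from $B$ is $A_0$ (all left-to-right minima of $B$) on one side, versus $B_1=A_1$ on the other. So this contribution does not ``reduce to the base case''.
\end{itemize}
What you actually need is the identity
\[
A_0(A_k-S_{k-2})=A_1(A_{k-1}-S_{k-2}).
\]
It involves the unknown $A_k$ and does not follow from the induction hypothesis.

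Supplying this identity is the substance of the paper's proof. The paper forms $P(z)=\sum_{k\ge1}A_kz^{k-1}$, solves the first recurrence for $P(z)$ in terms of $C(tz)$, and verifies that $A_k$ also satisfies the second recurrence with every $B$ replaced by $A$. Only then does induction using $A_1=B_1$ finish.

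Your fallback bijection is not specified precisely enough to check. Note also that the map $\varphi$ of Theorem~\ref{thm_231_method_1} only controls the coefficient of $q^0$, not the full distribution.
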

\begin{proof}
Let $A_k=Q_{132}^{(0,k,\emptyset,0)}(t,q)$ and $B_k=Q_{132}^{(1,k-1,\emptyset,0)}(t,q)$. 
Theorem~13 in \cite{QiuRemmel2018} states that
\begin{align*}
A_k=\frac1{1-tA_0}\left(1+t\sum_{j=0}^{k-2}C_jt^j(A_{k-1-j}-A_0)\right).    
\end{align*}

Let $C(z)$ be the generating function for the Catalan numbers and let
$P(z)=\sum_{k\geq1}A_kz^{k-1}$.
Multiplying both sides by $z^{k-1}$ and interchanging the order of summation,
we obtain
\begin{align*}
P(z)=\frac1{(1-tA_0)(1-z)}
-\frac{A_0tzC(tz)}{(1-tA_0)(1-z)}
+\frac{ztC(tz)P(z)}{1-tA_0}.
\end{align*}
Hence,
\begin{align*}
P(z)=\frac{1-A_0ztC(zt)}
{(1-tA_0-ztC(tz))(1-z)}.
\end{align*}
On the other hand,
\begin{align}
&\sum_{k\geq2}\left(
1+t\sum_{j=0}^{k-3}C_jt^jA_{k-1-j}
+\frac t{1-tA_0}\left(
A_{k-1}-\sum_{j=0}^{k-3}C_jt^j
\right)\right)z^{k-1}\notag\\
&=\frac z{1-z}+ztC(tz)P(z)-\frac{ztC(tz)}{1-tA_0}
+\frac{ztP(z)}{1-tA_0}
-\frac{z^2tC(tz)}{(1-tA_0)(1-z)}\notag\\
&=\frac{1-A_0ztC(tz)}
{(1-tA_0-ztC(tz))(1-z)}
-\frac1{1-tA_0}
+\frac{zt(1-A_0+tA_0^2)(ztC^2(tz)-C(tz)+1)}
{(1-tA_0)(1-tA_0-ztC(tz))(1-z)}\notag\\
&=\frac{1-A_0ztC(tz)}
{(1-tA_0-ztC(tz))(1-z)}
-\frac1{1-tA_0}
=P(z)-A_1.
\end{align}
Therefore,
\begin{align*}
A_k
=
1+t\sum_{j=0}^{k-3}C_jt^jA_{k-1-j}
+tA_1\left(
A_{k-1}-\sum_{j=0}^{k-3}C_jt^j
\right).
\end{align*}
Using the corrected form of \cite[Theorem 14]{QiuRemmel2018},
with the missing factor $t$ in its last term restored, we obtain
\begin{align*}
B_k
=
1+t\sum_{j=0}^{k-3}C_jt^jB_{k-1-j}
+tB_1\left(
A_{k-1}-\sum_{j=0}^{k-3}C_jt^j
\right).
\end{align*}
By \cite[Eq.~(58)]{QiuRemmel2018}, we have $A_1=B_1$.
Comparing the two recurrences above, we then obtain
$A_k=B_k$ for all $k\geq1$ by induction.
\end{proof}

Finally, we point out a connection between Theorem~\ref{thm-distribution} and
Theorem~\ref{thm_231_method_1}. By \cite[Corollary~1]{QiuRemmel2018}, within
$\operatorname{Av}_n(132)$, replacing the third parameter $\emptyset$ by $0$
preserves the coefficients of $q^0$ and $q^1$ in the corresponding
generating functions. Hence, by taking the coefficient of $q^0$ in
Theorem~\ref{thm-distribution} and then applying reversal, we recover the
Wilf equivalence between $(231,P_{\ell,1})$ and $(231,P_{\ell,2})$
established in Theorem~\ref{thm_231_method_1}.

\section*{Acknowledgements.}
The work of the first and third authors is supported by the Fundamental Research Funds 
for the Central Universities.

\newpage
\appendix
\section{Computational verification}
\subsection{Python code verifying Equation~\eqref{eq:catalan-ballot} for $4\le \ell\le 3{,}273$}
\label{code:equation}

\begin{lstlisting}[
caption={Exact search for the integer solutions of Equation \eqref{eq:catalan-ballot} with $4\le\ell\le3{,}273$.},
label={code:equation6}
]
def catalans_up_to(n):
    C = [1] * (n + 1)
    for k in range(n):
        C[k + 1] = C[k] * 2 * (2 * k + 1) // (k + 2)
    return C

def solve_equation(max_ell=3273):
    C = catalans_up_to(max_ell)
    solutions = []
    for ell in range(4, max_ell + 1):
        max_a = (ell - 1) // 2
        y = 3
        while (
            y <= ell
            and C[max_a] * (1 << y)
            < y * (1 << (2 * max_a + 1))
        ):
            y += 1
        max_y = y - 1
        if max_y < 3:
            continue      
        R = ell * C[ell - 1]
        current_y = 2
        for y in range(3, max_y + 1):
            old_y = current_y
            R = (
                R
                * (old_y + 1)
                * (ell - old_y)
                // (
                    old_y
                    * (2 * ell - old_y - 1)
                )
            )
            current_y = y
            for a in range(0, max_a + 1):

                L = ell * C[a] * C[ell - 1 - a]
                if L == R:
                    x1 = a + 1
                    x2 = ell - a
                    if 2 <= x1 <= ell:
                        solutions.append((ell, x1, y))
                    if x2 != x1 and 2 <= x2 <= ell:
                        solutions.append((ell, x2, y))
    solutions.sort()
    return solutions
solutions = solve_equation(3273)
print("All integer solutions for 4 <= ell <= 3273 are:")
for sol in solutions:
    print(sol)
print("Total number of solutions in the specified range", len(solutions))
\end{lstlisting}

The program returns:

\begin{verbatim}
All integer solutions for 4 <= ell <= 3273 are:
(5, 3, 4)
(6, 2, 4)
(6, 5, 4)
Total number of solutions in the specified range 3
\end{verbatim}

\subsection{Python code verifying Lemma~\ref{lem:no-solutions-large-l}}\label{app:code}
	\begin{lstlisting}[
caption={Exact verification of the finite cases in Lemma~\ref{lem:no-solutions-large-l}},
label={code:lemma314}
]
try:
    from math import comb
except ImportError:
    def comb(n, k):
        if k < 0 or k > n:
            return 0
        k = min(k, n - k)
        result = 1
        for i in range(1, k + 1):
            result = result * (n - k + i) // i
        return result

def C(n: int) -> int:
    """Return the n-th Catalan number."""
    return comb(2 * n, n) // (n + 1)
L = 3274
for a in range(30):          
    for y in range(3, 16):  
        # D(a, y) < 1 is equivalent to C_a * 2^y < y * 2^(2a+1).
        if C(a) * (1 << y) < y * (1 << (2 * a + 1)):
            left = L * C(a) * C(L - 1 - a)
            right = y * comb(2 * L - y - 1, L - 1)

            assert left < right, (
                f"Verification failed for a={a}, y={y}: "
                f"left={left}, right={right}"
            )
print("All cases passed.")
\end{lstlisting}
The program returns:
\begin{verbatim}
All cases passed.
\end{verbatim}
\end{document}